\documentclass{elsarticle}
\usepackage{amsmath,amsfonts, amssymb, epsfig, amscd, graphicx, latexsym, latexsym, amsthm, etex}
\usepackage[latin5]{inputenc}
\usepackage{young}

\newcommand{\beeq}{\begin{equation}}
\newcommand{\eneq}{\end{equation}}
\newcommand{\bece}{\begin{center}}
\newcommand{\ence}{\end{center}}

\newtheorem{thm}{Theorem}[section]
\newtheorem{pro}[thm]{Proposition}
\newtheorem{cor}[thm]{Corollary}
\newtheorem{lem}[thm]{Lemma}

\newtheorem{defn}[thm]{Definition}

\newenvironment{ex}[1][Example.]{\begin{trivlist}
\item[\hskip \labelsep {\bfseries #1}]}{\end{trivlist}}
\newenvironment{rem}[1][Remark.]{\begin{trivlist}
\item[\hskip \labelsep {\bfseries #1}]}{\end{trivlist}}

\newcommand{\cellsize}{10}
\newlength{\cellsz} \setlength{\cellsz}{\cellsize\unitlength}
\newsavebox{\cell}
\sbox{\cell}{\begin{picture}(\cellsize,\cellsize)
\put(0,0){\line(1,0){\cellsize}}
\put(0,0){\line(0,1){\cellsize}}
\put(\cellsize,0){\line(0,1){\cellsize}}
\put(0,\cellsize){\line(1,0){\cellsize}}
\end{picture}}
\newcommand\cellify[1]{\def\thearg{#1}\def\nothing{}%
\ifx\thearg\nothing
\vrule width0pt height\cellsz depth0pt\else
\hbox to 0pt{\usebox{\cell} \hss}\fi%
\vbox to \cellsz{
\vss
\hbox to \cellsz{\hss$#1$\hss}
\vss}}
\newcommand\tableau[1]{\vtop{\let\\\cr
\baselineskip -16000pt \lineskiplimit 16000pt \lineskip 0pt
\ialign{&\cellify{##}\cr#1\crcr}}}


\begin{document}
\begin{frontmatter}

\title{Tower Tableaux}
\author{Olcay Co\c{s}kun}
\ead{olcay.coskun@boun.edu.tr}

\author{Müge Ta\c{s}k\i n}
\ead{muge.taskin@boun.edu.tr}
\address{Boğaziçi Üniversitesi, Matematik Bölümü 34342 Bebek, İstanbul, Turkey.}

\fntext[fn1]{Both of the authors are supported by Tübitak through Kariyer Program\i, (Tübitak/3501/109T661).}

\begin{abstract}
We introduce a new combinatorial object called tower diagrams and prove
fundamental properties of these objects. We also introduce an algorithm that allows us
to slide words to tower diagrams. We show that the algorithm is well-defined only for
reduced words which makes the algorithm a test for reducibility. Using the algorithm,
a bijection between tower diagrams and finite permutations is obtained and it is
shown that this bijection specializes to a bijection between certain labellings of
a given tower diagram and reduced expressions of the corresponding permutation.
\end{abstract}
\begin{keyword}
reduced word \sep finite permutation\sep tower diagram \sep sliding algorithm
\end{keyword}
\end{frontmatter}

\section{Introduction}
It is well-known that the symmetric group $S_n$ on $n$ objects is
generated by the set $\{ s_1,s_2, \ldots, s_{n-1} \}$ of all
adjacent transpositions, with the usual notation. A product of
these generators is called a word. A basic problem raised via this
observation is to determine the set of words which, when
multiplied, give the same permutation in $S_n$. A simple reduction
is obtained by considering the set of \textit{reduced} words,
words containing the minimal number of transpositions giving the
permutation.

The study of reduced words is initiated by a work of Stanley \cite{S} where he
proved a formula for the number of reduced words corresponding to the longest
permutation. After Stanley's algebraic proof,
there appear several other combinatorial proofs by Edelman-Greene \cite{EG} and
Lascoux-Schützenberger \cite{LS}.

There are also works that generalize this result of Stanley to an arbitrary
permutation. Some of which use balanced tableaux \cite{FGRS} of Edelman-Greene, RC-
graphs \cite{BB1} and plactification map \cite{RS}, see also \cite{BB}.

On the other hand, our approach is based on \textit{tower
diagrams}. By a tower diagram, we mean a diagram in the first
quadrant of the plane that consists of finitely many vertical strips with
bottoms on the $x$-axis, see Section \ref{Section:towerdiagrams} for a precise definition.
An example of a tower diagram is shown below.

\begin{center}
\begin{picture}(100,60)
\multiput(0,0)(0,0){1}{\line(1,0){100}}
\multiput(0,0)(0,0){1}{\line(0,1){60}}
\multiput(0,10)(2,0){50}{\line(0,1){.1}}
\multiput(0,20)(2,0){50}{\line(0,1){.1}}
\multiput(0,30)(2,0){50}{\line(0,1){.1}}
\multiput(0,40)(2,0){50}{\line(0,1){.1}}
\multiput(0,50)(2,0){50}{\line(0,1){.1}}
\multiput(0,60)(2,0){50}{\line(0,1){.1}}
\multiput(10,0)(0,2){30}{\line(1,0){.1}}
\multiput(20,0)(0,2){30}{\line(1,0){.1}}
\multiput(30,0)(0,2){30}{\line(1,0){.1}}
\multiput(40,0)(0,2){30}{\line(1,0){.1}}
\multiput(50,0)(0,2){30}{\line(1,0){.1}}
\multiput(60,0)(0,2){30}{\line(1,0){.1}}
\multiput(70,0)(0,2){30}{\line(1,0){.1}}
\multiput(80,0)(0,2){30}{\line(1,0){.1}}
\multiput(90,0)(0,2){30}{\line(1,0){.1}}
\multiput(80,0)(0,2){30}{\line(1,0){.1}}
\multiput(90,0)(0,2){30}{\line(1,0){.1}} \put(10,0){\tableau{{}}}
\put(20,30){\tableau{{}\\{}\\{}\\{}}} \put(30,10){\tableau{{}\\{}}}
\put(40,0){\tableau{{}}} \put(60,30){\tableau{{}\\{}\\{}\\{}}}
\end{picture}
\end{center}
The results of the paper can be summarized via the following diagram.

\begin{center}
\begin{picture}(160,120)

\put(-10,90){\small{tower}}
\put(-15,82){\small{diagrams}}

\put(180,90){\small{finite}}
\put(168,82){\small{permutations}}

\put(-80,40){\small{standard}}
\put(-90,32){\small{tower tableaux}}

\put(20,40){\small{words on $\mathbb Z^+$}}
\put(10,32){\small{SR not terminate}}

\put(101,38){\small{words on $\mathbb Z^+$}}

\put(170,-10){\small{reduced words}}
\put(180,-18){\small{on $\mathbb Z^+$}}

\multiput(30,85)(5,0){25}{\line(1,0){3}}
\multiput(30,85)(5,0){1}{\vector(-1,0){3}}
\multiput(150,85)(5,0){1}{\vector(1,0){3}}

\multiput(195,70)(0,0){1}{\vector(0,-1){65}}

\multiput(155,50)(0,0){1}{\vector(1,1){22}}

\multiput(178,5)(0,0){1}{\vector(-1,1){22}}

\multiput(155,-5)(0,0){1}{\line(-3,1){94}}
\multiput(155,-7)(0,0){1}{\line(-3,1){95}}


\multiput(-35,50)(0,0){1}{\vector(1,1){22}}

\multiput(-27,40)(0,0){1}{\vector(1,0){25}}
\multiput(-4,35)(0,0){1}{\vector(-1,0){25}}

\multiput(75,40)(0,0){1}{\vector(1,0){25}}
\end{picture}
\end{center}
\vspace{20pt}

Next, we provide an explanation of the above diagram. In Section
\ref{Section:towerdiagrams}, we first define a special labelling of
tower diagrams, called standard tower tableau. Then we define the
sliding and recording algorithm (SR algorithm, for short) on all
finite words over $\mathbb Z^+$, not necessarily reduced. This
algorithm lets us slide words to the plane with the $x$-axis being
the border, on reverse diagonal lines, subject to certain
conditions. As a result, when the algorithm terminates with a
result, we obtain a standard tower tableaux corresponding to the
given word. Conversely, we introduce a reading function which reads
a word on $\mathbb Z^+$ from each standard tower tableau.

In Section \ref{Section:SlidingIntoTower}, we prove that the SR algorithm does not
terminate if and only if the word is reduced, which gives us the equality seen above.
Therefore we obtain our first main result that there is a bijection from the set of
all reduced words to the set of all standard tower tableaux given by the reading
function and the SR algorithm. Moreover with this result, the sliding
algorithm becomes an algorithm which also tests if a given word is reduced or not.
Another algorithm that can be used to check reducibility is introduced by Edelmann-Greene
in \cite{EG}. They use a generalized RSK-algorithm to associate a pair of tableaux to any word. Then the
reducibility is detected by certain conditions on the tableaux
and, in some cases,
one needs to use braid relations to check reducibility of a certain tableau word.

The second main result of the paper establishes a connection between tower diagrams and permutations, shown by dashed arrows in the diagram. This
is done by showing that the sliding algorithm
associates the same tower diagram to two different reduced words
if and only if the words are braid related, that is, they
correspond to the same permutation, see Theorem \ref{thm:braid2shape}.
In particular,
any given tower diagram $\mathcal T$ determines a unique
permutation $\omega_{\mathcal T}$, and vice versa, any permutation
$\omega$ determines a unique tower diagram $\mathcal T_\omega$. Moreover
we establish an explicit bijective correspondence between
\begin{enumerate}
\item the set Red$(\omega)$ of reduced expressions of a given permutation $\omega$ and
\item the set STT$(\mathcal T_\omega)$ of standard tower tableaux of shape
$\mathcal T_\omega$.
\end{enumerate}
The next question is to determine the set Red$(\omega)$ using the
above bijection. It is possible to describe all standard tower
tableaux of a given shape by a recursive algorithm. We describe
this in Section \ref{Section:recordAndReduced}. Then the reduced words corresponding
to a given set of standard tower tableaux is given by the reading function. However,
the algorithm, being recursive, is slow. A faster and systematic algorithm that uses
tower tableaux will be introduced in a sequel to this paper.

About the determination of the cardinality of Red$(\omega)$, we
prove, in Sections \ref{section:rothe2tower} and
\ref{section:tower2rothe}, that our construction can be used
to determine the Rothe diagram of the permutation $\omega$ and
vice versa the Rothe diagram determines the tower diagram of
$\omega$. Therefore the above cardinality can be evaluated by
using the techniques in \cite{RS}. A remark about this
construction is that although it is straightforward to determine
the tower diagram from the given Rothe diagram, the converse is
tricky. In order to determine the Rothe diagram, we associate a
\textit{virtual} tower diagram to the permutation and show that
together with the tower diagram, the virtual tower diagram
recovers the Rothe diagram.

However this observation does not mean that the two constructions,
tower diagrams and Rothe diagrams, are equivalent. A trivial
observation is that any tower diagram corresponds
to a permutation. On the other hand, Rothe diagrams can not be
chosen arbitrarily. Another important feature of tower diagrams is
that they unearth certain information regarding the reduced words
that cannot be read from the Rothe diagram.

An example of such an
information is the existence of the \textit{natural word} of a
permutation, introduced in Section \ref{Section:NaturalWord}. The natural word for a permutation is the reduced
word which consists of (strictly) increasing subsequences of
consecutive integers in which the subsequence of the initial terms
of these sequences is strictly decreasing. This word comes
canonically with the the associated tower diagram. As far as we know, no special attention was paid to the natural word previously.
This canonical word is already used crucially in Section
\ref{section:tower2rothe} in relation with
the problem of determination of the Rothe diagram from the tower diagram.  More importantly, it
will be the main tool in the construction of the above mentioned algorithm in a
sequel where we also introduce a variation of the selection sort algorithm on reduced words. See \cite{K} for the
selection sort algorithm.
 
\textbf{Acknowledgement.} We thank the referees for their helpful remarks.The second author thanks Alex Young, 
Hugh Thomas and Nanthel Bergeron for their helpful conversations and remarks on the subject during her visit at 
Field Institiute in 2008. 

\section{Tower Diagrams and Tower Tableaux}
\label{Section:towerdiagrams}
In this section, we introduce tower tableaux together with their
basic properties. First, recall that
a sequence of non-negative integers $\tau=(\tau_1,\tau_2,\ldots,\tau_k)$ is
called a \textbf{\textit{weak composition}} of $n$ if each term $\tau_k$ of the sequence is non-negative and they sum up to $n$.

By a \textbf{\textit {tower}} $\mathcal{T}$ of size $k\ge 0$ we mean a vertical strip
of $k$ cell cells of side length $1$. On the other hand, a sequence $\mathcal T
 =(\mathcal{T}_1,\mathcal{T}_2,\ldots)$ of towers in which only finitely many towers
has positive size is called a \textbf{\textit{tower diagram}}. We always consider
the tower diagram $\mathcal T$ as located on the first quadrant of the plane so that
for each $i$, the tower $\mathcal{T}_i$ is located on the interval $[i-1,i]$ of the
horizontal axis and has size equal to the size of $\mathcal T_i$.

Let $\mathcal T =(\mathcal T_1,\mathcal T_2,\ldots)$ be a tower diagram and let
$\mathcal T_i$ (resp. $\mathcal T_j$) be the first (resp. the last) tower of $\mathcal T$
with non-zero size. Then we abbreviate $\mathcal T$ as $\mathcal T=
(\mathcal T_i,\ldots, \mathcal T_j)$.

Now let $\tau_i$ denote the size of the tower $\mathcal T_i$. It is clear that the
sequence $\tau=(\tau_i, \ldots,\tau_j)$ is a weak composition of the size of $\mathcal T$. Here the size of a tower diagram is defined by the sum of the sizes of its towers.
Conversely, it is natural to represent a
weak composition $\tau=(\tau_1,\tau_2,\ldots,\tau_k)$ by a tower diagram
which consist of a sequence of towers
$\mathcal{T}=(\mathcal{T}_1,\mathcal{T}_2,\ldots, \mathcal{T}_k)$ with the size of
$\mathcal T_i$ equal to $\tau_i$.

To any tower diagram $\mathcal{T}$, one can associate a set, still denoted by
$\mathcal{T}$, consisting of the pairs of non-negative integers with
the rule  that each pair $(i,j)$ corresponds to the cell in
$\mathcal{T}$ whose south-east corner is located at the point $(i,j)$ of the
first quadrant. Such a set can also be characterized by the  rule that if
$(i,j)\in \mathcal{T}$ then $\{(i,0), (i,1),\ldots, (i,j)
\}\subset \mathcal{T}$. For the rest, we identify any cell with its south-east
corner.

\begin{ex} For the weak composition $\tau=(0,1,4,2,1,0,4)$ the
corresponding tower diagram  $\mathcal{T}$ and the corresponding set
$\mathcal{T}$ are given as follows.

\begin{center}
\begin{picture}(100,60)
\multiput(0,0)(0,0){1}{\line(1,0){100}}
\multiput(0,0)(0,0){1}{\line(0,1){60}}
\multiput(0,10)(2,0){50}{\line(0,1){.1}}
\multiput(0,20)(2,0){50}{\line(0,1){.1}}
\multiput(0,30)(2,0){50}{\line(0,1){.1}}
\multiput(0,40)(2,0){50}{\line(0,1){.1}}
\multiput(0,50)(2,0){50}{\line(0,1){.1}}
\multiput(0,60)(2,0){50}{\line(0,1){.1}}
\multiput(10,0)(0,2){30}{\line(1,0){.1}}
\multiput(20,0)(0,2){30}{\line(1,0){.1}}
\multiput(30,0)(0,2){30}{\line(1,0){.1}}
\multiput(40,0)(0,2){30}{\line(1,0){.1}}
\multiput(50,0)(0,2){30}{\line(1,0){.1}}
\multiput(60,0)(0,2){30}{\line(1,0){.1}}
\multiput(70,0)(0,2){30}{\line(1,0){.1}}
\multiput(80,0)(0,2){30}{\line(1,0){.1}}
\multiput(90,0)(0,2){30}{\line(1,0){.1}}
\multiput(80,0)(0,2){30}{\line(1,0){.1}}
\multiput(90,0)(0,2){30}{\line(1,0){.1}} \put(10,0){\tableau{{}}}
\put(20,30){\tableau{{}\\{}\\{}\\{}}} \put(30,10){\tableau{{}\\{}}}
\put(40,0){\tableau{{}}} \put(60,30){\tableau{{}\\{}\\{}\\{}}}
\end{picture}
\end{center}
$$\mathcal{T}=\{(2,0), (3,3), (3,2), (3,1), (3,0), (4,1), (4,0), (5,0), (7,3), (7,2),$$ $$(7,1), (7,0)\}
$$
\end{ex}

Writing $(n)$ for the trivial weak composition, we can think of any weak composition
$\tau=(\tau_1,\tau_2,\ldots,\tau_k)$ as a concatenation of trivial weak compositions
$$
\tau = (\tau_1)\sqcup (\tau_2)\sqcup \ldots \sqcup (\tau_k).
$$
In a similar way, we can regard any tower diagram $\mathcal T = (\mathcal T_i,
\ldots,\mathcal T_j)$ as a concatenation of its towers and write
$$
\mathcal T = (\mathcal T_i)\sqcup (\mathcal T_{i+1})\sqcup \ldots \sqcup (\mathcal T_j).
$$
It is straightforward that the concatenation of towers can be generalized to the
concatenation of two tower diagrams provided that one tower lies completely on the
right of the other one.

A basic operation to obtain new tower diagrams from old is to let some cells fly
from the diagram. The reason for this operation will become clear later when
we introduce the reading word of a labelled diagram. We define the flight as follows.

\begin{defn} \label{def:flight} Let $(i,j)$ be a cell  in $\mathcal{T}$.
\begin{enumerate}
\item[i)] The cell $(i,j)$ \textbf{\textit{lies on the diagonal line}} $x+y=d$ if
its main diagonal is a part of $x+y=d$, that is, if $d=i+j$.

\item[ii)] The cell $(i,j)$ is said to have a \textbf{flight
path} in $\mathcal{T}$ if one of the following conditions is satisfied:

\begin{enumerate}
    \item[\textbf{(F1)}] (Direct flight) The diagram $\mathcal{T}$
has no cell to the left of $(i,j)$ lying on (and therefore above)
the diagonal $x+y=i+j-1$. In this case the flight path of $(i,j)$ in
$\mathcal T$ is defined by
$$\mathrm{flightpath}((i,j),\mathcal T):=\{(i,j)\}.
$$
\item[\textbf{(F2)}] (Zigzag flight) Among all cells of the diagram $\mathcal{T}$ lying on the diagonal
    $x+y=i+j-1$ and to the left of $(i,j)$,  the one closest to $(i,j)$, say
    $(i',j')$, has a flight path and  $(i',j'+1) \in \mathcal{T} $.  In this case the flight path of $(i,j)$ in $\mathcal T$ is defined by
$$\mathrm{flightpath}((i,j),\mathcal T):=\{(i,j),
(i',j'+1)\}\cup\mathrm{flightpath}((i',j'),\mathcal T).$$
\end{enumerate}

\item[iii)] If $(i,j)$ has  a flight path in $\mathcal T$, let $(i^\prime,j^\prime)$ be
the minimum element in the flight path of $(i,j)$ with respect to
the lexicographic order.
 Then the number  $i^\prime +j^\prime$ is called the \textbf{\textit{flight number}} of the cell $(i,j)$, denoted by
$$\mathrm{flight}\#((i,j),\mathcal T).$$
\item[iv)] The cell $(i,j)$ is called a \textbf{corner cell} of $\mathcal{T}$, if
$(i,j+1) \not\in \mathcal{T}$ and $(i,j)$ has a flight path.
\item[v)] Let $c = (i,j)$ be a corner cell in $\mathcal T$.  The tower diagram obtained from $\mathcal T$ by removing
the corner cell $c$ is denoted by
$$c^{\nwarrow}\mathcal{T}.$$
\end{enumerate}
\end{defn}
\begin{rem}
\begin{enumerate}
\item One can easily observe that if two cells have the same flight number in $\mathcal T=(\mathcal T_1,
\mathcal T_2,\mathcal T_3,\ldots)$ then they have the same flight number in  $(\mathcal T_2,\mathcal T_3,\ldots)$.
Thus if $(i,j)$ and $(i',j')$ are two cells in $\mathcal T$ with $(i',j')$ is
lexicographically smaller, then  both cells have the same flight number if and only if  both $(i',j')$ and  $(i',j'+1)$ lies
in $\mathrm{flightpath} ((i,j),\mathcal T)$.
\item We sometimes consider the flight path of a cell $(i,j)$ as the trace of the south-
east corner of $(i,j)$ on the plane. Hence by a flight path, we mean a zigzag line as
seen in the examples below.
\end{enumerate}
\end{rem}

\begin{ex} We will consider the tower diagram $\mathcal T$ which corresponds to  the weak
composition $\tau=(0,1,4,2,1,0,4)$ of the previous example. We
first show that the only cells without a flight path are $(3,0)$,
$(4,0)$, $(5,0)$, and $(7,0)$, as the following diagrams illustrate
respectively.

\begin{center}
\begin{picture}(80,60)
\multiput(0,0)(0,0){1}{\line(1,0){80}}
\multiput(0,0)(0,0){1}{\line(0,1){60}}
\multiput(0,10)(2,0){40}{\line(0,1){.1}}
\multiput(0,20)(2,0){40}{\line(0,1){.1}}
\multiput(0,30)(2,0){40}{\line(0,1){.1}}
\multiput(0,40)(2,0){40}{\line(0,1){.1}}
\multiput(0,50)(2,0){40}{\line(0,1){.1}}
\multiput(0,60)(2,0){40}{\line(0,1){.1}}
\multiput(10,0)(0,2){30}{\line(1,0){.1}}
\multiput(20,0)(0,2){30}{\line(1,0){.1}}
\multiput(30,0)(0,2){30}{\line(1,0){.1}}
\multiput(40,0)(0,2){30}{\line(1,0){.1}}
\multiput(50,0)(0,2){30}{\line(1,0){.1}}
\multiput(60,0)(0,2){30}{\line(1,0){.1}}
\multiput(70,0)(0,2){30}{\line(1,0){.1}}
\multiput(80,0)(0,2){30}{\line(1,0){.1}}
 \put(10,0){\tableau{{}}}
\put(20,30){\tableau{{}\\{}\\{}\\{}}} \put(30,10){\tableau{{}\\{}}}
\put(40,0){\tableau{{}}} \put(60,30){\tableau{{}\\{}\\{}\\{}}}
\multiput(25,5)(0,0){1}{\vector(-1,1){10}}
\end{picture}\hskip.1in
\begin{picture}(80,60)
\multiput(0,0)(0,0){1}{\line(1,0){80}}
\multiput(0,0)(0,0){1}{\line(0,1){60}}
\multiput(0,10)(2,0){40}{\line(0,1){.1}}
\multiput(0,20)(2,0){40}{\line(0,1){.1}}
\multiput(0,30)(2,0){40}{\line(0,1){.1}}
\multiput(0,40)(2,0){40}{\line(0,1){.1}}
\multiput(0,50)(2,0){40}{\line(0,1){.1}}
\multiput(0,60)(2,0){40}{\line(0,1){.1}}
\multiput(10,0)(0,2){30}{\line(1,0){.1}}
\multiput(20,0)(0,2){30}{\line(1,0){.1}}
\multiput(30,0)(0,2){30}{\line(1,0){.1}}
\multiput(40,0)(0,2){30}{\line(1,0){.1}}
\multiput(50,0)(0,2){30}{\line(1,0){.1}}
\multiput(60,0)(0,2){30}{\line(1,0){.1}}
\multiput(70,0)(0,2){30}{\line(1,0){.1}}
\multiput(80,0)(0,2){30}{\line(1,0){.1}}
 \put(10,0){\tableau{{}}}
\put(20,30){\tableau{{}\\{}\\{}\\{}}} \put(30,10){\tableau{{}\\{}}}
\put(40,0){\tableau{{}}} \put(60,30){\tableau{{}\\{}\\{}\\{}}}
\multiput(35,5)(0,0){1}{\line(-1,1){10}}
\multiput(25,15)(0,0){1}{\line(0,-1){10}}
\multiput(25,5)(0,0){1}{\vector(-1,1){10}}
\end{picture}\hskip.1in
\begin{picture}(80,60)
\multiput(0,0)(0,0){1}{\line(1,0){80}}
\multiput(0,0)(0,0){1}{\line(0,1){60}}
\multiput(0,10)(2,0){40}{\line(0,1){.1}}
\multiput(0,20)(2,0){40}{\line(0,1){.1}}
\multiput(0,30)(2,0){40}{\line(0,1){.1}}
\multiput(0,40)(2,0){40}{\line(0,1){.1}}
\multiput(0,50)(2,0){40}{\line(0,1){.1}}
\multiput(0,60)(2,0){40}{\line(0,1){.1}}
\multiput(10,0)(0,2){30}{\line(1,0){.1}}
\multiput(20,0)(0,2){30}{\line(1,0){.1}}
\multiput(30,0)(0,2){30}{\line(1,0){.1}}
\multiput(40,0)(0,2){30}{\line(1,0){.1}}
\multiput(50,0)(0,2){30}{\line(1,0){.1}}
\multiput(60,0)(0,2){30}{\line(1,0){.1}}
\multiput(70,0)(0,2){30}{\line(1,0){.1}}
\multiput(80,0)(0,2){30}{\line(1,0){.1}}
 \put(10,0){\tableau{{}}}
\put(20,30){\tableau{{}\\{}\\{}\\{}}} \put(30,10){\tableau{{}\\{}}}
\put(40,0){\tableau{{}}} \put(60,30){\tableau{{}\\{}\\{}\\{}}}
\multiput(45,5)(0,0){1}{\line(-1,1){10}}
\multiput(35,15)(0,0){1}{\line(0,-1){10}}
\multiput(35,5)(0,0){1}{\line(-1,1){10}}
\multiput(25,15)(0,0){1}{\line(0,-1){10}}
\multiput(25,5)(0,0){1}{\vector(-1,1){10}}
\end{picture}\hskip.1in
\begin{picture}(80,60)
\multiput(0,0)(0,0){1}{\line(1,0){80}}
\multiput(0,0)(0,0){1}{\line(0,1){60}}
\multiput(0,10)(2,0){40}{\line(0,1){.1}}
\multiput(0,20)(2,0){40}{\line(0,1){.1}}
\multiput(0,30)(2,0){40}{\line(0,1){.1}}
\multiput(0,40)(2,0){40}{\line(0,1){.1}}
\multiput(0,50)(2,0){40}{\line(0,1){.1}}
\multiput(0,60)(2,0){40}{\line(0,1){.1}}
\multiput(10,0)(0,2){30}{\line(1,0){.1}}
\multiput(20,0)(0,2){30}{\line(1,0){.1}}
\multiput(30,0)(0,2){30}{\line(1,0){.1}}
\multiput(40,0)(0,2){30}{\line(1,0){.1}}
\multiput(50,0)(0,2){30}{\line(1,0){.1}}
\multiput(60,0)(0,2){30}{\line(1,0){.1}}
\multiput(70,0)(0,2){30}{\line(1,0){.1}}
\multiput(80,0)(0,2){30}{\line(1,0){.1}}
 \put(10,0){\tableau{{}}}
\put(20,30){\tableau{{}\\{}\\{}\\{}}} \put(30,10){\tableau{{}\\{}}}
\put(40,0){\tableau{{}}} \put(60,30){\tableau{{}\\{}\\{}\\{}}}
\multiput(65,5)(0,0){1}{\vector(-1,1){40}}
\end{picture}
\end{center}

First observe that the cell $(2,0)$ is the closest cell to $(3,0)$ lying the diagonal
$x+y=2$, to the left of $(3,0)$. One can easily see that the cell $(2,0)$ has a
flight path which consists only of the cell $(2,0)$ itself. On the
other hand, $(2,1)$ does not belong to $\mathcal T$ and therefore
$(3,0)$ has no flight path in  $\mathcal T$. Similar reasoning also
applies to the cell $(7,0)$.

The closest cell lying to the left of $(4,0)$ on the diagonal
$x+y=3$ is $(3,0)$. Now if  $(4,0)$ has a flight path then it must
contain the cell $(3,1)$ and the flight path of $(3,0)$. On the
other hand although $(3,1)$ belongs to $\mathcal T$, the cell
$(3,0)$ has no flight path, so  $(4,0)$  has  no flight path in
$\mathcal T$. A similar argument applied on the cell $(5,0)$
shows that it has no flight path in $\mathcal T$.

We now illustrate the flight paths  of $(3,1)$ and
$(4,1)$ by the following diagrams.

\begin{center}
\begin{picture}(80,60)
\multiput(0,0)(0,0){1}{\line(1,0){80}}
\multiput(0,0)(0,0){1}{\line(0,1){60}}
\multiput(0,10)(2,0){40}{\line(0,1){.1}}
\multiput(0,20)(2,0){40}{\line(0,1){.1}}
\multiput(0,30)(2,0){40}{\line(0,1){.1}}
\multiput(0,40)(2,0){40}{\line(0,1){.1}}
\multiput(0,50)(2,0){40}{\line(0,1){.1}}
\multiput(0,60)(2,0){40}{\line(0,1){.1}}
\multiput(10,0)(0,2){30}{\line(1,0){.1}}
\multiput(20,0)(0,2){30}{\line(1,0){.1}}
\multiput(30,0)(0,2){30}{\line(1,0){.1}}
\multiput(40,0)(0,2){30}{\line(1,0){.1}}
\multiput(50,0)(0,2){30}{\line(1,0){.1}}
\multiput(60,0)(0,2){30}{\line(1,0){.1}}
\multiput(70,0)(0,2){30}{\line(1,0){.1}}
\multiput(80,0)(0,2){30}{\line(1,0){.1}}
 \put(10,0){\tableau{{}}}
\put(20,30){\tableau{{}\\{}\\{}\\{}}} \put(30,10){\tableau{{}\\{}}}
\put(40,0){\tableau{{}}} \put(60,30){\tableau{{}\\{}\\{}\\{}}}
\multiput(25,15)(0,0){1}{\vector(-1,1){30}}
\end{picture}\hskip.1in
\begin{picture}(80,60)
\multiput(0,0)(0,0){1}{\line(1,0){80}}
\multiput(0,0)(0,0){1}{\line(0,1){60}}
\multiput(0,10)(2,0){40}{\line(0,1){.1}}
\multiput(0,20)(2,0){40}{\line(0,1){.1}}
\multiput(0,30)(2,0){40}{\line(0,1){.1}}
\multiput(0,40)(2,0){40}{\line(0,1){.1}}
\multiput(0,50)(2,0){40}{\line(0,1){.1}}
\multiput(0,60)(2,0){40}{\line(0,1){.1}}
\multiput(10,0)(0,2){30}{\line(1,0){.1}}
\multiput(20,0)(0,2){30}{\line(1,0){.1}}
\multiput(30,0)(0,2){30}{\line(1,0){.1}}
\multiput(40,0)(0,2){30}{\line(1,0){.1}}
\multiput(50,0)(0,2){30}{\line(1,0){.1}}
\multiput(60,0)(0,2){30}{\line(1,0){.1}}
\multiput(70,0)(0,2){30}{\line(1,0){.1}}
\multiput(80,0)(0,2){30}{\line(1,0){.1}}
 \put(10,0){\tableau{{}}}
\put(20,30){\tableau{{}\\{}\\{}\\{}}} \put(30,10){\tableau{{}\\{}}}
\put(40,0){\tableau{{}}} \put(60,30){\tableau{{}\\{}\\{}\\{}}}
\multiput(35,15)(0,0){1}{\line(-1,1){10}}
\multiput(25,25)(0,0){1}{\line(0,-1){10}}
\multiput(25,15)(0,0){1}{\vector(-1,1){30}}
\end{picture}
\end{center}

Here the flight paths are given by
$$
\begin{aligned}&\mathrm{flightpath}((3,1),\mathcal T)=\{(3,1)\}\\
&\mathrm{flightpath}((4,1),\mathcal T)=\{(4,1),(3,2),(3,1)\}
\end{aligned}$$
whereas corresponding flight numbers are both equal to $4$. Moreover
$(4,1)$ is a corner cell in $\mathcal T$ since it is also a top
cell.

One can easily check that the remaining cells in $\mathcal T$
also have flight paths. On the other hand, among all of them, the cells
$(2,0),(3,3), (4,1)$ and $(7,3)$ are the corner cells in $\mathcal T$.
\end{ex}

In the next lemma, we examine the relation between consecutive flights of cells.
This technical lemma will be used in Section \ref{Section:recordAndReduced}. We
postpone the proof of this lemma to \ref{Appendix}.

\begin{lem}\label{lem:ax-zigzag}
Let $\mathcal T$ be a tower diagram and $c_1 = (i,j_1)$ and $c_2 = (i,j_2)$ be two
cells in the tower $\mathcal T_i$ of $\mathcal T$. Assume that $c_1$ and $c_2$ have
flight paths with respective flight numbers $f_1$ and $f_2$.
Then $|f_1-f_2|\ge |j_1-j_2|$. Moreover if $|j_1-j_2| = 1$ then $|f_1-f_2| = 1$.
\end{lem}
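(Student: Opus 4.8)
The plan is to distill a recursive description of the flight number and then induct on the tower index $i$. Throughout write $f(i,j)$ for $\mathrm{flight}\#((i,j),\mathcal T)$ and assume without loss of generality $j_1<j_2$. I would first record three elementary facts. $(a)$ The flight path of $(i,j)$ starts on the diagonal $x+y=i+j$ and, by \textbf{(F2)}, every zigzag step passes to the next lower diagonal; hence its lexicographically least cell lies on a diagonal at most $i+j$, giving $f(i,j)\le i+j$ with equality exactly when the flight is direct. $(b)$ If $(i,j)$ has a zigzag flight through the closest cell $(i',j')$ to its left on $x+y=i+j-1$, then $\mathrm{flightpath}((i,j))$ is obtained from $\mathrm{flightpath}((i',j'))$ by adjoining the two cells $(i,j)$ and $(i',j'+1)$, neither of which is lexicographically smaller than the least cell already present; thus $f(i,j)=f(i',j')$. $(c)$ Since every tower is filled from the $x$-axis upward, any cell lying left of $(i,j+1)$ on $x+y=i+j$ sits directly above a cell lying left of $(i,j)$ on $x+y=i+j-1$; therefore a direct flight at $(i,j)$ forces a direct flight at $(i,j+1)$, so within a fixed tower the cells with a direct flight form an upward closed set.

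Using $(a)$--$(c)$ I would settle the last assertion ($|j_1-j_2|=1\Rightarrow|f_1-f_2|=1$) by induction on $i$. For $i=1$ there are no cells to the left, every flight is direct, and $f(1,j)=1+j$. For the step, take $(i,j)$ and $(i,j+1)$ both with flight paths. If $(i,j)$ is direct then by $(c)$ so is $(i,j+1)$ and $f(i,j+1)-f(i,j)=1$. If $(i,j)$ zigzags through $(i',j')$, then the same bottom-filled argument as in $(c)$ shows that $(i',j'+1)$ is the closest cell left of $(i,j+1)$ on $x+y=i+j$, and since $(i,j+1)$ has a flight path it must zigzag through it; by $(b)$ this gives $f(i,j)=f(i',j')$ and $f(i,j+1)=f(i',j'+1)$, where $(i',j')$ and $(i',j'+1)$ are consecutive cells of the tower $\mathcal T_{i'}$ with $i'<i$. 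The induction hypothesis yields $f(i',j'+1)-f(i',j')=1$, hence $f(i,j+1)-f(i,j)=1$.

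For the inequality $|f_1-f_2|\ge|j_1-j_2|$ I would again induct on $i$ and split on the type of the upper cell $(i,j_2)$. If $(i,j_2)$ is direct then $f(i,j_2)=i+j_2\ge(i+j_1)+(j_2-j_1)\ge f(i,j_1)+(j_2-j_1)$ by $(a)$, which gives the bound and monotonicity at once. Otherwise $(i,j_2)$, and hence by $(c)$ every cell of $\mathcal T_i$ up to height $j_2$ that has a flight path, has a zigzag flight. Chaining through the flight-path cells $j_1=k_0<\dots<k_m=j_2$, consecutive such cells contribute exactly $1$ by the moreover just proved, so it suffices to treat a gap: $(i,j)$ with a flight path, $(i,j+1),\dots,(i,j''-1)$ without one, and $(i,j'')$ with a flight path, for which I must show $f(i,j'')-f(i,j)\ge j''-j$. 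Here $(i,j)$ must zigzag, say through $(p,q)$, since a direct flight would propagate upward by $(c)$ and leave no gap. The cells above $(i,j)$ either keep zigzagging into the same tower $\mathcal T_p$---in which case the gap reflects a gap of equal height in $\mathcal T_p$ and the induction hypothesis applies verbatim---or the tower $\mathcal T_p$ is exhausted, forcing the flight to escape to a tower farther to the left (or to become direct) and thereby jump.

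I expect the exhaustion subcase to be the main obstacle: once the target tower runs out, the two cells no longer reduce to a single shorter tower, and one must show directly that the landing diagonal rises by at least the height of the gap. This is precisely where the inequality becomes strict, as the tower diagram of sizes $(2,4,3)$ shows, where the third tower has $f=1$ at height $0$ and $f=4$ at height $2$. I would handle it by a secondary induction establishing that exhausting a tower can only raise the flight number, so that every unit of height lost to a gap is repaid by at least one unit in the flight number; combined with the telescoping $f(i,j_2)-f(i,j_1)=\sum_t\bigl(f(i,k_{t+1})-f(i,k_t)\bigr)$ over the flight-path cells this yields $f(i,j_2)-f(i,j_1)\ge j_2-j_1>0$, whence $|f_1-f_2|\ge|j_1-j_2|$.
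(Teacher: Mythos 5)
Your preliminary facts $(a)$--$(c)$ are correct, and three of your four cases are genuinely proved: the ``moreover'' statement by induction on the column index, the case where the upper cell $(i,j_2)$ flies directly (via the bound $f(i,j_1)\le i+j_1$ from $(a)$), and the gap case in which both endpoints $(i,j)$ and $(i,j'')$ zigzag into the \emph{same} tower $\mathcal T_p$, where their targets $(p,q)$ and $(p,q+j''-j)$ both carry flight paths and the inductive hypothesis applies. But the exhaustion sub-case, which you yourself flag as ``the main obstacle,'' is exactly the crux of the lemma, and your proposal contains no proof of it --- only the hope that ``a secondary induction establishing that exhausting a tower can only raise the flight number'' will work. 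That sentence is essentially a restatement of the inequality to be proved, not an argument: once $\mathcal T_p$ runs out, $f(i,j)$ equals the flight number of $(p,q)$ while $f(i,j'')$ equals the flight number of a cell $(p',r)$ in a strictly farther tower, $p'<p$, lying on a diagonal larger by $j''-j+1$. These are cells in \emph{different} towers, so your inductive hypothesis (which compares cells within one tower) gives no purchase, and the cheap bound $f(i,j)\le i+j-1$ from $(a)$ is far too weak, since the flight of $(i,j'')$ may itself make many further zigzags.

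The paper closes precisely this case (its ``Case 2'') with a device absent from your proposal: replace the exhausted tower $\mathcal T_p$ by a virtual taller one, producing a new diagram $\mathcal T'$ in which the comparison becomes a same-tower comparison. In $\mathcal T'$ the cell $(p,q)$ keeps its flight number (its flight involves only columns left of $p$, which are untouched), so it still records $f(i,j)$; and the new top cell $(p,\,i+j''-p)$ of the extended tower has flight number equal to $f(i,j'')$, because its flight searches the same diagonal $x+y=i+j''-1$ strictly to the left of column $p$ that the flight of $(i,j'')$ searched in $\mathcal T$ (no column from $p$ to $i-1$ of $\mathcal T$ carries a cell on that diagonal, by the choice of $\mathcal T_p$ and exhaustion). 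Now the two relevant cells sit in one tower of $\mathcal T'$, at heights differing by $j''-j+1$, and the inductive hypothesis yields $f(i,j'')-f(i,j)\ge j''-j+1$ --- which also explains the strictness you observed in your $(2,4,3)$ example. Without this comparison diagram, or an equivalent mechanism for relating two flights after they diverge into different towers, your proof is incomplete at its decisive step.
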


Since one of our aims is to relate labelled tower diagrams to
words, next we specify the kinds of labelling that will be used through
the rest of the paper.
\begin{defn} Let $\mathcal{T}$ be a tower diagram of size $n$ and
  $f:\mathcal{T}\mapsto [n]$ be a bijective map. Here we put  $[n] =
\{1,2,\ldots, n \}$.
Then
\begin{enumerate}
\item[i)] The set $$T=\{[(i,j),f(i,j)]\mid (i,j)\in \mathcal{T}\}$$ is called a
\textbf{tower tableau of shape $\mathcal{T}$}. In this case we write $\mathrm{shape}(T)=\mathcal{T}.$
\item[ii)] Given a tower tableau  $T$ of size $n$ and $a\in [n]$, the set
$$T_{\leq a}:=\{ [(i,j),b]\in T \mid b\leq a\}.
$$
is a (not necessarily tower) subtableau of cells in $T$ whose
labels are less then or equal to $a$.
\item[iii)] A tower tableau $T$ of shape $\mathcal{T}$ is called a
\textbf{standard tower tableau} if for each $[(i,j),a]$ in $T$,
the tableau $T_{\le a}$ is a tower tableau and moreover the cell $(i,j)$ is
a corner cell of the diagram $\mathrm{shape}(T_{\leq a})$.
\item[iv)] The set of all standard tower tableaux of all shapes is denoted by STT.
\end{enumerate}
\end{defn}
\begin{ex} Let $\tau = (2,1,0,1)$. Then the standard tower tableaux of this shape
are given as follows.
$$\line(0,1){30}\line(1,0){50}
\multiput(-50,0)(10,0){5}{\line(0,1){2}}
\multiput(-50,0)(0,10){3}{\line(1,0){2}}
\put(-50,20){\tableau{\\{4}\\{3}}} \put(-40,10){\tableau{\\{2}}}
\put(-20,10){\tableau{\\{1}}} \hskip.1in
\line(0,1){30}\line(1,0){50}
\multiput(-50,0)(10,0){5}{\line(0,1){2}}
\multiput(-50,0)(0,10){3}{\line(1,0){2}}
\put(-50,20){\tableau{\\{4}\\{2}}} \put(-40,10){\tableau{\\{1}}}
\put(-20,10){\tableau{\\{3}}} \hskip.1in
\line(0,1){30}\line(1,0){50}
\multiput(-50,0)(10,0){5}{\line(0,1){2}}
\multiput(-50,0)(0,10){3}{\line(1,0){2}}
\put(-50,20){\tableau{\\{4}\\{3}}} \put(-40,10){\tableau{\\{1}}}
\put(-20,10){\tableau{\\{2}}}
 \hskip.1in
\line(0,1){30}\line(1,0){50}
\multiput(-50,0)(10,0){5}{\line(0,1){2}}
\multiput(-50,0)(0,10){3}{\line(1,0){2}}
\put(-50,20){\tableau{\\{3}\\{2}}} \put(-40,10){\tableau{\\{4}}}
\put(-20,10){\tableau{\\{1}}} \hskip.1in
$$

$$
\line(0,1){30}\line(1,0){50}
\multiput(-50,0)(10,0){5}{\line(0,1){2}}
\multiput(-50,0)(0,10){3}{\line(1,0){2}}
\put(-50,20){\tableau{\\{3}\\{1}}} \put(-40,10){\tableau{\\{4}}}
\put(-20,10){\tableau{\\{2}}}
 \hskip.1in
 \line(0,1){30}\line(1,0){50}
\multiput(-50,0)(10,0){5}{\line(0,1){2}}
\multiput(-50,0)(0,10){3}{\line(1,0){2}}
\put(-50,20){\tableau{\\{2}\\{1}}} \put(-40,10){\tableau{\\{4}}}
\put(-20,10){\tableau{\\{3}}} \hskip.1in
\line(0,1){30}\line(1,0){50}
\multiput(-50,0)(10,0){5}{\line(0,1){2}}
\multiput(-50,0)(0,10){3}{\line(1,0){2}}
\put(-50,20){\tableau{\\{2}\\{1}}} \put(-40,10){\tableau{\\{3}}}
\put(-20,10){\tableau{\\{4}}} \hskip.1in
\line(0,1){30}\line(1,0){50}
\multiput(-50,0)(10,0){5}{\line(0,1){2}}
\multiput(-50,0)(0,10){3}{\line(1,0){2}}
\put(-50,20){\tableau{\\{3}\\{2}}} \put(-40,10){\tableau{\\{1}}}
\put(-20,10){\tableau{\\{4}}} $$ On the other hand, the labelling
$$\line(0,1){30}\line(1,0){50}
\multiput(-50,0)(10,0){5}{\line(0,1){2}}
\multiput(-50,0)(0,10){3}{\line(1,0){2}}
\put(-50,20){\tableau{\\{3}\\{1}}} \put(-40,10){\tableau{\\{2}}}
\put(-20,10){\tableau{\\{4}}} $$ of $(2,1,0,1)$ is not a standard
tower tableau since the cell labelled by $2$ is not a corner
cell of $T_{\leq 2}$.
\end{ex}
Next we introduce the \textbf{\textit{reading function}}
\[
\mathrm{Read}: \mathrm{STT}\rightarrow \mathrm{W}(\mathbb Z^+)
\]
from the set of all standard tower tableaux to the set $\mathrm{W}(\mathbb Z^+)$ of all finite words over $\mathbb Z^+$ as follows. This definition justifies the choice of the standard labellings defined above.

Let $R$ be a standard tower tableau of size $n$. Then for each
$k\in\{1,\ldots, n\}$ the cell labelled by $k$ in $R$, say
$(i_k,j_k)$, is a corner cell in $\mathrm{shape}(R_{\leq k})$
and therefore it has a flight path in $\mathrm{shape}(R_{\leq
k})$. We let
$$
\alpha_k=\mathrm{flight}\#((i_k,j_k),\mathrm{shape}(R_{\leq k})).
$$
One can easily see that if $(i_k,j_k)$ satisfies $(\textbf{F1})$
then $\alpha_k=i_k+j_k$ otherwise $\alpha_k=i_k+j_k-f_k$ where $f_k$
is the number of times that $(\textbf{F2})$ is used in the
construction of
$\mathrm{flightpath}((i_k,j_k),\mathrm{shape}(R_{\leq k}))$.
Finally let
$$ \mathrm{Read}(R):= \alpha_1\ldots \alpha_k \ldots \alpha_n.$$
We call the word $\mathrm{Read}(R)$ the \textbf{\textit{reading word}} of $R$.

\begin{ex}
The reading words of the standard tower tableaux given in the previous example are listed below.
\[
4212, 2142, 2412, 4121, 1421, 1241, 1214, 2124.
\]
\end{ex}

For the rest of the paper, basically, we analyse the reading function and
an inverse of it. We leave this function alone until the end of
Section \ref{Section:SlidingAlg}.

\section{Sliding and Recording Algorithm}
\label{Section:SlidingIntoTower}\label{Section:SlidingAlg}
The main tool in defining a function from the set $\mathrm{W}(\mathbb Z^+)$ of words
to the set $\mathrm{STT}$ of all standard tower tableaux is the
sliding and recording algorithm that we shall define in this section.

As a preparation to the definition, we first introduce the basic move for the
algorithm, called sliding into a tower diagram.
This is a way to enlarge a tower diagram by sliding a new cell into it. As one would expect, the new cell
will have a flight path which can be specified through sliding. We also prove a couple of lemmas to clarify
the relation between consecutive slides. In particular, we show that the slide operation satisfies braid
relations. We begin with the definition of the slide operation.

\begin{defn}\label{def:sliding} Let $\mathcal{T}=(\mathcal{T}_1,\mathcal{T}_2,\ldots)$
be a tower diagram and $\alpha$ be a positive integer. In the
following we denote   the \textbf{\textit{sliding of}} $\alpha$ into
$\mathcal{T}$ by
$$\alpha^{\searrow}
\mathcal{T}=\alpha^{\searrow}(\mathcal{T}_1,\mathcal{T}_2,\ldots).
$$
\begin{enumerate}
\item[\textbf{(S1)}] If  $\mathcal{T}$ has  no squares lying on the diagonal
$x+y=\alpha-1$ then we put
$$\alpha^{\searrow} \mathcal{T}:= (\mathcal{T}_1,\ldots,\mathcal{T}_{\alpha-1}) \sqcup \alpha^{\searrow}
(\mathcal{T}_{\alpha},\ldots)$$
\begin{enumerate}
\item If  $\mathcal{T}$ has  no squares lying on the diagonal
$x+y=\alpha$ then necessarily  $\mathcal{T}_\alpha =\varnothing$ and
for $\mathcal{T}_\alpha'= \{ (\alpha,0)\}$
$$\alpha^{\searrow}
(\mathcal{T}_{\alpha},\ldots)=(\mathcal{T}_{\alpha}',\ldots)~~\text{and}~~
\alpha^{\searrow} \mathcal{T}:=(\mathcal{T}_1\ldots
\mathcal{T}_{\alpha-1},\mathcal{T}'_\alpha,\mathcal{T}_{\alpha+1},\ldots
).
$$
\item If $(\alpha,0) \in \mathcal{T}_\alpha$ and $(\alpha,1)\not \in
\mathcal{T}_\alpha$ then the slide  $\alpha^{\searrow} \mathcal{T}$
terminates without a result.
\item If $(\alpha,0) \in \mathcal{T}_\alpha$ and $(\alpha,1) \in
\mathcal{T}_\alpha$  then
$$\alpha^{\searrow} \mathcal{T}:= (\mathcal{T}_1,\ldots,\mathcal{T}_\alpha) \sqcup
(\alpha+1)^{\searrow} (\mathcal{T}_{\alpha+1},\ldots).
$$
and $\alpha^{\searrow} \mathcal{T}$ terminates if and only if
$(\alpha+1)^{\searrow} (\mathcal{T}_{\alpha+1},\ldots)$ terminates.
\end{enumerate}
\item[\textbf{(S2)}]  Suppose now that  $\mathcal{T}$ has  some squares lying on the diagonal
$x+y=\alpha-1$ and  let $\mathcal{T}_i$ be the first tower  from the
left which contains such a  square, which is necessarily
$(i,\alpha-1-i)$ for some $1\leq i < \alpha$. Then we put
$$\alpha^{\searrow} \mathcal{T}:=
(\mathcal{T}_1,\ldots,\mathcal{T}_{i-1}) \sqcup \alpha^{\searrow}
(\mathcal{T}_{i},\ldots).$$
\begin{enumerate}
\item If $(i,\alpha-i)\not \in \mathcal{T}_i$ then for $\mathcal{T}_i'=\mathcal{T}_i \cup \{
(i,\alpha-i)\}$,
$$\alpha^{\searrow} (\mathcal{T}_{i},\ldots):=(\mathcal{T}_i',\ldots)~~\text{and}~~
\alpha^{\searrow} \mathcal{T}:=(\mathcal{T}_1\ldots
\mathcal{T}_{i-1},\mathcal{T}'_i,\mathcal{T}_{i+1},\ldots ).
$$
\item If $(i,\alpha-i) \in \mathcal{T}_i$ and $(i,\alpha-i+1)\not \in
\mathcal{T}_i$ then the slide $\alpha^{\searrow} \mathcal{T}$
terminates without a result.
\item If $(i,\alpha-i) \in \mathcal{T}_i$ and $(i,\alpha-i+1) \in
\mathcal{T}_i$  then
$$\alpha^{\searrow} \mathcal{T}:= (\mathcal{T}_1,\ldots,\mathcal{T}_i) \sqcup
(\alpha+1)^{\searrow} (\mathcal{T}_{i+1},\ldots)
$$
and $\alpha^{\searrow} \mathcal{T}$ terminates if and only if
$(\alpha+1)^{\searrow} (\mathcal{T}_{i+1},\ldots)$ terminates.
\end{enumerate}
\end{enumerate}

Therefore if the algorithm does not terminate then
$\alpha^{\searrow} \mathcal{T}:= \mathcal{T}\cup \{ (i,j)\}$ for
some square $(i,j)$.
\end{defn}

\begin{ex} Let
$\mathcal{T}=(\mathcal{T}_1,\mathcal{T}_2,\mathcal{T}_3,\mathcal{T}_4,\mathcal{T}_5,\mathcal{T}_6,\mathcal{T}_7)=(1,0,4,2,0,0,2)$ be the
tower diagram shown below.
\begin{center}
\begin{picture}(80,60)
\put(-40,30){$\mathcal{T}= $}
\multiput(0,0)(0,0){1}{\line(1,0){80}}
\multiput(0,0)(0,0){1}{\line(0,1){60}}
\multiput(0,10)(2,0){40}{\line(0,1){.1}}
\multiput(0,20)(2,0){40}{\line(0,1){.1}}
\multiput(0,30)(2,0){40}{\line(0,1){.1}}
\multiput(0,40)(2,0){40}{\line(0,1){.1}}
\multiput(0,50)(2,0){40}{\line(0,1){.1}}
\multiput(0,60)(2,0){40}{\line(0,1){.1}}
\multiput(10,0)(0,2){30}{\line(1,0){.1}}
\multiput(20,0)(0,2){30}{\line(1,0){.1}}
\multiput(30,0)(0,2){30}{\line(1,0){.1}}
\multiput(40,0)(0,2){30}{\line(1,0){.1}}
\multiput(50,0)(0,2){30}{\line(1,0){.1}}
\multiput(60,0)(0,2){30}{\line(1,0){.1}}
\multiput(70,0)(0,2){30}{\line(1,0){.1}}
  \put(0,0){\tableau{{}}}
\put(20,30){\tableau{{}\\{}\\{}\\{}}} \put(30,10){\tableau{{}\\{}}}
\put(60,10){\tableau{{}\\{}}}
\end{picture}
\end{center}

The following figures illustrate $1^{\searrow} \mathcal{T}$,
$2^{\searrow} \mathcal{T}$ and $3^{\searrow} \mathcal{T}$
respectively.

\begin{center}
\begin{picture}(80,60)
\multiput(0,0)(0,0){1}{\line(1,0){80}}
\multiput(0,0)(0,0){1}{\line(0,1){60}}
\multiput(0,10)(2,0){40}{\line(0,1){.1}}
\multiput(0,20)(2,0){40}{\line(0,1){.1}}
\multiput(0,30)(2,0){40}{\line(0,1){.1}}
\multiput(0,40)(2,0){40}{\line(0,1){.1}}
\multiput(0,50)(2,0){40}{\line(0,1){.1}}
\multiput(0,60)(2,0){40}{\line(0,1){.1}}
\multiput(10,0)(0,2){30}{\line(1,0){.1}}
\multiput(20,0)(0,2){30}{\line(1,0){.1}}
\multiput(30,0)(0,2){30}{\line(1,0){.1}}
\multiput(40,0)(0,2){30}{\line(1,0){.1}}
\multiput(50,0)(0,2){30}{\line(1,0){.1}}
\multiput(60,0)(0,2){30}{\line(1,0){.1}}
\multiput(70,0)(0,2){30}{\line(1,0){.1}}
 \put(0,0){\tableau{{}}}
\put(20,30){\tableau{{}\\{}\\{}\\{}}} \put(30,10){\tableau{{}\\{}}}
\put(60,10){\tableau{{}\\{}}}
\multiput(-10,20)(0,0){1}{\line(1,-1){15}}
\multiput(5,5)(0,0){1}{\vector(0,1){10}}
\end{picture}
\hskip.15in
\begin{picture}(80,60)
\multiput(0,0)(0,0){1}{\line(1,0){80}}
\multiput(0,0)(0,0){1}{\line(0,1){60}}
\multiput(0,10)(2,0){40}{\line(0,1){.1}}
\multiput(0,20)(2,0){40}{\line(0,1){.1}}
\multiput(0,30)(2,0){40}{\line(0,1){.1}}
\multiput(0,40)(2,0){40}{\line(0,1){.1}}
\multiput(0,50)(2,0){40}{\line(0,1){.1}}
\multiput(0,60)(2,0){40}{\line(0,1){.1}}
\multiput(10,0)(0,2){30}{\line(1,0){.1}}
\multiput(20,0)(0,2){30}{\line(1,0){.1}}
\multiput(30,0)(0,2){30}{\line(1,0){.1}}
\multiput(40,0)(0,2){30}{\line(1,0){.1}}
\multiput(50,0)(0,2){30}{\line(1,0){.1}}
\multiput(60,0)(0,2){30}{\line(1,0){.1}}
\multiput(70,0)(0,2){30}{\line(1,0){.1}}
  \put(0,0){\tableau{{}}}
\put(20,30){\tableau{{}\\{}\\{}\\{}}} \put(30,10){\tableau{{}\\{}}}
\put(60,10){\tableau{{}\\{}}}
 \put(0,10){\tableau{{\bf*}}}
\multiput(-10,30)(0,0){1}{\vector(1,-1){15}}
\end{picture}
\hskip.15in
\begin{picture}(80,60)
\multiput(0,0)(0,0){1}{\line(1,0){80}}
\multiput(0,0)(0,0){1}{\line(0,1){60}}
\multiput(0,10)(2,0){40}{\line(0,1){.1}}
\multiput(0,20)(2,0){40}{\line(0,1){.1}}
\multiput(0,30)(2,0){40}{\line(0,1){.1}}
\multiput(0,40)(2,0){40}{\line(0,1){.1}}
\multiput(0,50)(2,0){40}{\line(0,1){.1}}
\multiput(0,60)(2,0){40}{\line(0,1){.1}}
\multiput(10,0)(0,2){30}{\line(1,0){.1}}
\multiput(20,0)(0,2){30}{\line(1,0){.1}}
\multiput(30,0)(0,2){30}{\line(1,0){.1}}
\multiput(40,0)(0,2){30}{\line(1,0){.1}}
\multiput(50,0)(0,2){30}{\line(1,0){.1}}
\multiput(60,0)(0,2){30}{\line(1,0){.1}}
\multiput(70,0)(0,2){30}{\line(1,0){.1}}
  \put(0,0){\tableau{{}}}
\put(20,30){\tableau{{}\\{}\\{}\\{}}} \put(30,10){\tableau{{}\\{}}}
\put(60,10){\tableau{{}\\{}}}
 \put(40,0){\tableau{{\bf*}}}
\multiput(-10,40)(0,0){1}{\line(1,-1){35}}
\multiput(25,5)(0,0){1}{\line(0,1){10}}
\multiput(25,15)(0,0){1}{\line(1,-1){10}}
\multiput(35,5)(0,0){1}{\line(0,1){10}}
\multiput(35,15)(0,0){1}{\vector(1,-1){10}}
\end{picture}\hskip.15in
\end{center}

For the sliding  $1^{\searrow} \mathcal{T}$, observe that
$\mathcal{T}$ has no cells   on $x+y=0$. Here
$(1,0)\in\mathcal{T}_1 $ but $(1,1)\not \in\mathcal{T}_1 $,
therefore by \textbf{(S1)}(b), the slide $1^{\searrow} \mathcal{T}$ terminates.

For the sliding $2^{\searrow} \mathcal{T}$, observe that
 $\mathcal{T}_1$ is
the first tower from the left which has a cell on $x+y=1$. Here
$(1,0)\in \mathcal{T}_1$ but $(1,1)\not \in \mathcal{T}_1$.
Therefore by \textbf{(S2)}(a), the sliding $2^{\searrow}
\mathcal{T}$ creates a new cell $(1,1)$ on top of $\mathcal{T}_1$
which is indicated by a star in the above  picture.

For the sliding $3^{\searrow} \mathcal{T}$, observe that
$\mathcal{T}$ has no cells lying on $x+y=2$, and
$(3,0)\in\mathcal{T}_3$ and $(3,1)\in\mathcal{T}_3$. Therefore by
\textbf{(S1)}(c)
$$3^{\searrow} \mathcal{T}=(\mathcal{T}_1,\mathcal{T}_2,\mathcal{T}_3)\sqcup 4^{\searrow}(\mathcal{T}_4,\mathcal{T}_5,\mathcal{T}_6, \mathcal{T}_7)$$
which is indicated by a zigzag line as above. Here the slide
$4^{\searrow}(\mathcal{T}_4,\mathcal{T}_5,\mathcal{T}_6,
\mathcal{T}_7)$ also satisfies \textbf{(S1)}(c),  therefore
$$3^{\searrow} \mathcal{T}=(\mathcal{T}_1,\mathcal{T}_2,\mathcal{T}_3)\sqcup 4^{\searrow}(\mathcal{T}_4,\mathcal{T}_5,\mathcal{T}_6,
\mathcal{T}_7)=(\mathcal{T}_1,\mathcal{T}_2,\mathcal{T}_3,\mathcal{T}_4)\sqcup
5^{\searrow}(\mathcal{T}_5,\mathcal{T}_6, \mathcal{T}_7).$$ Now
$5^{\searrow}(\mathcal{T}_5,\mathcal{T}_6, \mathcal{T}_7)$ satisfies
\textbf{(S1)}(a), therefore sliding $3^{\searrow} \mathcal{T}$
creates a new cell $(5,0)$ at the end which is indicated by a star
above.

The sliding of the numbers $4,5,6,7,8$ and $9$ on to $\mathcal{T}$ is
illustrated by the following diagrams respectively, where the stars
 represent the new cells created by the nonterminating slides. The
 verification is very similar to the one above and is left to the
 reader.

\vskip.10in
\begin{center}
\begin{picture}(80,60)
\multiput(0,0)(0,0){1}{\line(1,0){80}}
\multiput(0,0)(0,0){1}{\line(0,1){60}}
\multiput(0,10)(2,0){40}{\line(0,1){.1}}
\multiput(0,20)(2,0){40}{\line(0,1){.1}}
\multiput(0,30)(2,0){40}{\line(0,1){.1}}
\multiput(0,40)(2,0){40}{\line(0,1){.1}}
\multiput(0,50)(2,0){40}{\line(0,1){.1}}
\multiput(0,60)(2,0){40}{\line(0,1){.1}}
\multiput(10,0)(0,2){30}{\line(1,0){.1}}
\multiput(20,0)(0,2){30}{\line(1,0){.1}}
\multiput(30,0)(0,2){30}{\line(1,0){.1}}
\multiput(40,0)(0,2){30}{\line(1,0){.1}}
\multiput(50,0)(0,2){30}{\line(1,0){.1}}
\multiput(60,0)(0,2){30}{\line(1,0){.1}}
\multiput(70,0)(0,2){30}{\line(1,0){.1}}
 \put(0,0){\tableau{{}}}
\put(20,30){\tableau{{}\\{}\\{}\\{}}} \put(30,10){\tableau{{}\\{}}}
\put(60,10){\tableau{{}\\{}}}
\multiput(-10,50)(0,0){1}{\line(1,-1){35}}
\multiput(25,15)(0,0){1}{\line(0,1){10}}
\multiput(25,25)(0,0){1}{\line(1,-1){10}}
\multiput(35,15)(0,0){1}{\vector(0,1){10}}
\end{picture}
\hskip.15in
\begin{picture}(80,60)
\multiput(0,0)(0,0){1}{\line(1,0){80}}
\multiput(0,0)(0,0){1}{\line(0,1){60}}
\multiput(0,10)(2,0){40}{\line(0,1){.1}}
\multiput(0,20)(2,0){40}{\line(0,1){.1}}
\multiput(0,30)(2,0){40}{\line(0,1){.1}}
\multiput(0,40)(2,0){40}{\line(0,1){.1}}
\multiput(0,50)(2,0){40}{\line(0,1){.1}}
\multiput(0,60)(2,0){40}{\line(0,1){.1}}
\multiput(10,0)(0,2){30}{\line(1,0){.1}}
\multiput(20,0)(0,2){30}{\line(1,0){.1}}
\multiput(30,0)(0,2){30}{\line(1,0){.1}}
\multiput(40,0)(0,2){30}{\line(1,0){.1}}
\multiput(50,0)(0,2){30}{\line(1,0){.1}}
\multiput(60,0)(0,2){30}{\line(1,0){.1}}
\multiput(70,0)(0,2){30}{\line(1,0){.1}}
  \put(0,0){\tableau{{}}}
\put(20,30){\tableau{{}\\{}\\{}\\{}}} \put(30,10){\tableau{{}\\{}}}
\put(60,10){\tableau{{}\\{}}}
 \put(30,20){\tableau{{\bf*}}}
\multiput(0,50)(0,0){1}{\line(1,-1){25}}
\multiput(25,25)(0,0){1}{\line(0,1){10}}
\multiput(25,35)(0,0){1}{\line(1,-1){10}}
\end{picture}
\hskip.15in
\begin{picture}(80,60)
\multiput(0,0)(0,0){1}{\line(1,0){80}}
\multiput(0,0)(0,0){1}{\line(0,1){60}}
\multiput(0,10)(2,0){40}{\line(0,1){.1}}
\multiput(0,20)(2,0){40}{\line(0,1){.1}}
\multiput(0,30)(2,0){40}{\line(0,1){.1}}
\multiput(0,40)(2,0){40}{\line(0,1){.1}}
\multiput(0,50)(2,0){40}{\line(0,1){.1}}
\multiput(0,60)(2,0){40}{\line(0,1){.1}}
\multiput(10,0)(0,2){30}{\line(1,0){.1}}
\multiput(20,0)(0,2){30}{\line(1,0){.1}}
\multiput(30,0)(0,2){30}{\line(1,0){.1}}
\multiput(40,0)(0,2){30}{\line(1,0){.1}}
\multiput(50,0)(0,2){30}{\line(1,0){.1}}
\multiput(60,0)(0,2){30}{\line(1,0){.1}}
\multiput(70,0)(0,2){30}{\line(1,0){.1}}
 \put(0,0){\tableau{{}}}
\put(20,30){\tableau{{}\\{}\\{}\\{}}} \put(30,10){\tableau{{}\\{}}}
\put(60,10){\tableau{{}\\{}}}
\multiput(0,60)(0,0){1}{\line(1,-1){25}}
\multiput(25,35)(0,0){1}{\vector(0,1){10}}
\end{picture}
\end{center}

 \vskip.10in
\begin{center}
\begin{picture}(80,60)
\multiput(0,0)(0,0){1}{\line(1,0){80}}
\multiput(0,0)(0,0){1}{\line(0,1){60}}
\multiput(0,10)(2,0){40}{\line(0,1){.1}}
\multiput(0,20)(2,0){40}{\line(0,1){.1}}
\multiput(0,30)(2,0){40}{\line(0,1){.1}}
\multiput(0,40)(2,0){40}{\line(0,1){.1}}
\multiput(0,50)(2,0){40}{\line(0,1){.1}}
\multiput(0,60)(2,0){40}{\line(0,1){.1}}
\multiput(10,0)(0,2){30}{\line(1,0){.1}}
\multiput(20,0)(0,2){30}{\line(1,0){.1}}
\multiput(30,0)(0,2){30}{\line(1,0){.1}}
\multiput(40,0)(0,2){30}{\line(1,0){.1}}
\multiput(50,0)(0,2){30}{\line(1,0){.1}}
\multiput(60,0)(0,2){30}{\line(1,0){.1}}
\multiput(70,0)(0,2){30}{\line(1,0){.1}}
  \put(0,0){\tableau{{}}}
\put(20,30){\tableau{{}\\{}\\{}\\{}}} \put(30,10){\tableau{{}\\{}}}
\put(60,10){\tableau{{}\\{}}}
 \put(20,40){\tableau{{\bf*}}}
\multiput(10,60)(0,0){1}{\vector(1,-1){15}} \end{picture}
\hskip.15in
\begin{picture}(80,60)
\multiput(0,0)(0,0){1}{\line(1,0){80}}
\multiput(0,0)(0,0){1}{\line(0,1){60}}
\multiput(0,10)(2,0){40}{\line(0,1){.1}}
\multiput(0,20)(2,0){40}{\line(0,1){.1}}
\multiput(0,30)(2,0){40}{\line(0,1){.1}}
\multiput(0,40)(2,0){40}{\line(0,1){.1}}
\multiput(0,50)(2,0){40}{\line(0,1){.1}}
\multiput(0,60)(2,0){40}{\line(0,1){.1}}
\multiput(10,0)(0,2){30}{\line(1,0){.1}}
\multiput(20,0)(0,2){30}{\line(1,0){.1}}
\multiput(30,0)(0,2){30}{\line(1,0){.1}}
\multiput(40,0)(0,2){30}{\line(1,0){.1}}
\multiput(50,0)(0,2){30}{\line(1,0){.1}}
\multiput(60,0)(0,2){30}{\line(1,0){.1}}
\multiput(70,0)(0,2){30}{\line(1,0){.1}}
 \put(0,0){\tableau{{}}}
\put(20,30){\tableau{{}\\{}\\{}\\{}}} \put(30,10){\tableau{{}\\{}}}
\put(60,10){\tableau{{}\\{}}}
\multiput(30,50)(0,0){1}{\line(1,-1){35}}
\multiput(65,15)(0,0){1}{\vector(0,1){10}}
\end{picture}
\hskip.15in
\begin{picture}(80,60)
\multiput(0,0)(0,0){1}{\line(1,0){80}}
\multiput(0,0)(0,0){1}{\line(0,1){60}}
\multiput(0,10)(2,0){40}{\line(0,1){.1}}
\multiput(0,20)(2,0){40}{\line(0,1){.1}}
\multiput(0,30)(2,0){40}{\line(0,1){.1}}
\multiput(0,40)(2,0){40}{\line(0,1){.1}}
\multiput(0,50)(2,0){40}{\line(0,1){.1}}
\multiput(0,60)(2,0){40}{\line(0,1){.1}}
\multiput(10,0)(0,2){30}{\line(1,0){.1}}
\multiput(20,0)(0,2){30}{\line(1,0){.1}}
\multiput(30,0)(0,2){30}{\line(1,0){.1}}
\multiput(40,0)(0,2){30}{\line(1,0){.1}}
\multiput(50,0)(0,2){30}{\line(1,0){.1}}
\multiput(60,0)(0,2){30}{\line(1,0){.1}}
\multiput(70,0)(0,2){30}{\line(1,0){.1}}
  \put(0,0){\tableau{{}}}
\put(20,30){\tableau{{}\\{}\\{}\\{}}} \put(30,10){\tableau{{}\\{}}}
\put(60,10){\tableau{{}\\{}}}
 \put(60,20){\tableau{{\bf*}}}
\multiput(30,60)(0,0){1}{\vector(1,-1){35}}
\end{picture}
\end{center}
\end{ex}

In the following we make precise the relation between sliding of a number and flight number of
a cell by proving that they are mutually inverse processes.
The proof of this result relies on the observation that the flight path of a cell
is travelled in the reverse direction by the slide of the flight number.  We give below the detailed proof of the first part and leave the proof of the second part.

\begin{lem}\label{lem:flightVSslide}
Let $\mathcal T$ be a tower diagram.
\begin{enumerate}
\item[(a)] If $\alpha$ is a positive integer such that
$\alpha^\searrow\mathcal T= \mathcal T \cup \{d\}$ for some cell $d$, then $d$ is a corner cell in $\alpha^\searrow\mathcal T$ and moreover
$$
\mathrm{flight}\#(d,\alpha^{\searrow}\mathcal T) =\alpha.$$
\item[(b)] If $c$ is a corner cell with flight number $\beta$, then
$$\beta^\searrow(c^\nwarrow\mathcal T) = \mathcal T.$$
\end{enumerate}
\end{lem}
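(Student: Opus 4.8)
The guiding principle is that sliding the flight number of a cell retraces that cell's flight path in reverse: the flight path of a cell descends through consecutive diagonals while moving left, and the slide of its flight number climbs back through the same diagonals while moving right, one tower per step. To fix notation, write the flight path of the corner cell $c=(i,j)$ as the chain of anchors $c^{(0)}=c,\,c^{(1)}=(i_1,j_1),\ldots,c^{(f)}=(i_f,j_f)$ generated by Definition \ref{def:flight}, so that each $c^{(m+1)}$ is the cell closest to the left of $c^{(m)}$ on the next lower diagonal with $(i_{m+1},j_{m+1}+1)\in\mathcal T$, while $c^{(f)}$ is the base anchor satisfying \textbf{(F1)}. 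Then $i_0>i_1>\cdots>i_f$, the anchor $c^{(m)}$ lies on the diagonal $x+y=D-m$ where $D=i+j$, and the flight number equals $\beta=i_f+j_f=D-f$, the diagonal of the base anchor.

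For part (b) I would establish the following trajectory statement by induction on $f-m$, which follows the slide's own recursion from the lowest diagonal upward (with the convention $i_{f+1}=0$ so that the slide begins on the whole diagram): after $f-m$ recursion steps, $\beta^{\searrow}(c^{\nwarrow}\mathcal T)$ has reduced to a slide of value $D-m$ on the towers of $c^{\nwarrow}\mathcal T$ lying to the right of tower $i_{m+1}$, and the leftmost cell it meets on the examined diagonal $x+y=D-m-1$ lies in tower $i_m$. Two facts from Definition \ref{def:flight} keep the induction running. First, the ``closest cell to the left'' clause of \textbf{(F2)} (and, at the base, the ``no cell to the left'' clause of \textbf{(F1)}) guarantees that $\mathcal T$ has no cell on diagonal $x+y=D-m-1$ strictly between the previously visited tower and tower $i_m$, so the slide really does land in tower $i_m$. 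Second, for $m\ge 1$ the anchor $c^{(m)}=(i_m,j_m)$ is present and the flight-path cell $(i_m,j_m+1)$ also lies in $\mathcal T$, which is exactly the configuration that forces the recursing sub-case (c) rather than the terminating sub-case (b), and sends the computation to a slide of value $D-m+1$ on the towers to the right of $i_m$. When $m=0$ the relevant cell is $c=(i_0,j_0)$ itself, which was deleted, so $(i_0,j_0)\notin c^{\nwarrow}\mathcal T$; this triggers a creating sub-case (a), and because $c$ was a corner cell---hence the top of its tower---the cell is recreated exactly at $c$. Since no other cell is altered, $\beta^{\searrow}(c^{\nwarrow}\mathcal T)=\mathcal T$. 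The only configuration needing separate attention is an anchor on the $x$-axis, $j_m=0$: then diagonal $x+y=D-m-1$ carries no cell and the slide passes through \textbf{(S1)} instead of \textbf{(S2)}, with sub-cases (a) and (c) playing the same roles.

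Part (a) is the identical correspondence read in the opposite direction. If $\alpha^{\searrow}\mathcal T=\mathcal T\cup\{d\}$ then the slide did not terminate, so it is a (possibly empty) string of recursing sub-cases (c) closed off by a single creating sub-case that deposits $d$. The creation step leaves the cell directly above $d$ empty and makes $d$ a top cell, so $d$ is a corner cell of $\alpha^{\searrow}\mathcal T$; and the strictly increasing list of towers $t_1<t_2<\cdots<t_r$ visited by the slide, read backwards, is precisely the anchor chain of the flight path of $d$ in $\alpha^{\searrow}\mathcal T$, whose base anchor sits in tower $t_1$ on the diagonal $x+y=\alpha$. Hence $\mathrm{flight}\#(d,\alpha^{\searrow}\mathcal T)=\alpha$. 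Thus both parts reduce to the single step-by-step matching between the recursion of Definition \ref{def:flight} and that of Definition \ref{def:sliding}.

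I expect the main difficulty to be the case bookkeeping rather than any single idea: every slide step first branches into \textbf{(S1)} versus \textbf{(S2)} and then into the sub-cases (a), (b), (c), and one must check at each step that the intended sub-case fires and that the geometric invariant---the leftmost cell on the current diagonal lies in the next anchor's tower, with no intervening cell---is preserved. The delicate points are the $x$-axis anchors, where \textbf{(S1)} replaces \textbf{(S2)}, and the assertion that the terminating sub-cases \textbf{(S1)}(b) and \textbf{(S2)}(b) never fire while sliding a genuine flight number; both are controlled exactly by the ``closest/no cell to the left'' conditions built into the definition of the flight path.
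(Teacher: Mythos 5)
Your argument is correct and rests on exactly the observation the paper itself uses --- that the slide of the flight number retraces the flight path in reverse --- implemented, as in the paper, by matching the recursion of Definition \ref{def:sliding} step by step against that of Definition \ref{def:flight}, with the same bookkeeping (the \textbf{(S2)}-versus-\textbf{(S1)} split for anchors on the $x$-axis, and the check that the terminating sub-cases cannot fire because the \textbf{(F2)} conditions supply both required cells). The only real difference is one of emphasis: the paper carries out the detailed induction (on the size of the diagram, following the \textbf{(S2)}(c) recursion) for part (a) and explicitly leaves part (b) to the reader, whereas you carry out the detailed induction (along the anchor chain, with the trajectory invariant) for part (b) and obtain part (a) as the reverse reading, so your write-up supplies precisely the half of the proof the paper omits.
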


\begin{proof} (a)  We consider the case that   $d=(i,j)$  is  created by  $\alpha^{\searrow}
\mathcal{T}$ subject to  either \textbf{(S2)}(a) or \textbf{(S2)}(c)  of
Definition~\ref{def:sliding} .

If $d=(i,j)$ is created subject to  \textbf{(S2)}(a) then $d=(i,\alpha-i)$ and no tower to the left of $\mathcal{T}_i$ contains a cell on the diagonal
$x+y=\alpha-1$. Therefore $d$ in $\alpha^{\searrow}
\mathcal{T}$ satisfies  \textbf{(F1)} of
Definition~\ref{def:flight} i.e., it is a top cell having a flightpath which consists only of itself. Hence its flight number in this tableau is also $\alpha$.

Now if $(i,j)$ is the cell created by $\alpha^{\searrow}
\mathcal{T}$ subject to  \textbf{(S2)}(c) of
Definition~\ref{def:sliding}, then no tower to the left of $\mathcal{T}_i$ contains a
cell  on the diagonal  $x+y=\alpha-1$. Furthermore  $(i,\alpha-i)\in
\mathcal{T}_i$,  $(i,\alpha-i+1) \in \mathcal{T}_i$ and
$$\alpha^{\searrow} \mathcal{T}:=
(\mathcal{T}_1,\ldots,\mathcal{T}_i) \sqcup (\alpha+1)^{\searrow}
(\mathcal{T}_{i+1},\ldots,\mathcal{T}_k) $$
In this case,  $d$ lies in $(\alpha+1)^{\searrow}
(\mathcal{T}_{i+1},\ldots,\mathcal{T}_k)$ and the size of
$(\mathcal{T}_{i+1},\ldots,\mathcal{T}_k)$ is strictly less than the
size of $\mathcal{T}$.

By induction on the size of the tower
diagram, we may assume that $d$ is a corner cell of
$(\alpha+1)^{\searrow} (\mathcal{T}_{i+1},\ldots,\mathcal{T}_k)$ and
that its flight number in this tableau is $(\alpha+1)$.
Therefore
the flight path of $d$ in  $(\alpha+1)^{\searrow} (\mathcal{T}_{i+1},\ldots,\mathcal{T}_k)$ contains some  cells lying on
$x+y=\alpha+1$ and  let  $(i',j')\in \mathcal{T}_{i'} $ be the
lexicographically first among all such cells in this tableau. Hence no towers between $\mathcal{T}_{i} $ and  $\mathcal{T}_{i'} $ has a cell lying
on $x+y=\alpha+1$.   Observe that
$$\begin{aligned}
 \mathrm{flightpath}(d,\mathcal
T) =& ~\mathrm{flightpath}(d,(\alpha+1)^{\searrow}
(\mathcal{T}_{i+1},\ldots,\mathcal{T}_k))\\ & \cup
\mathrm{flightpath}((i',j'),\mathcal T) \end{aligned}$$
and that $(i,\alpha-i+1)$ of   $\mathcal{T}_i$ and $(i',j')\in \mathcal{T}_{i'} $ lies in the same diagonal $x+y=\alpha+1$. Therefore by \textbf{(F2)} of
Definition~\ref{def:flight}
$$\mathrm{flightpath}((i',j'),\mathcal T)=\{(i',j'),(i,\alpha-i+1)\}\cup\mathrm{flightpath}((i,\alpha-i),\mathcal T).$$
 On the other hand  $\mathrm{flightpath}((i,\alpha-i),\mathcal T) =\{(i,\alpha-i)\}$ by \textbf{(F1)} of
Definition~\ref{def:flight}. Now  $(i,\alpha-i)$ is the lexicographically first cell in $\mathrm{flightpath}(d,\mathcal
T)$. Therefore the flight number of $d$ in $\mathcal
T$ is $\alpha $ as desired.

We will skip the case defined by \textbf{(S1)} (a) and (b), since   the related analysis is very similar.
\end{proof}

Our next aim is to describe the relations between consecutive
slides. We state these result as two separate lemmas. Because the
proofs of the lemmas are technical and very long, we include them in
\ref{Appendix}.

The first lemma examines the case where the integers that are to be slided are far away from each other.

\begin{lem}\label{Lemma:SlidingFarAway}
Let $\mathcal{T}=(\mathcal{T}_1,\ldots,\mathcal{T}_k)$ be a tower diagram and $\alpha$ and $\beta$ be
positive integers satisfying $|{\beta}-{\alpha}| \geq 2$.
Then either the equality $$\alpha^{\searrow}(\beta^{\searrow}\mathcal{T})=
\beta^{\searrow} (\alpha^{\searrow}\mathcal{T})$$ holds or both slides
terminate.
\end{lem}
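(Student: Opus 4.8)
By the symmetry of the asserted identity in $\alpha$ and $\beta$, I may assume $\alpha<\beta$, so that $\beta\ge\alpha+2$, and I will argue by induction on the number $k$ of towers of $\mathcal T$, exploiting the recursive prefix-stripping built into Definition~\ref{def:sliding}: clause (c) of both (S1) and (S2) replaces $\mathcal T$ by a strictly shorter suffix while increasing the slid value by one. Small diagrams, and diagrams carrying no cell on the two relevant diagonals, are disposed of directly from (S1)--(S2) and form the base of the induction. The governing bookkeeping fact, read off from Definition~\ref{def:sliding}, is that a slide of value $\gamma$ inspects only cells on the diagonals $x+y\ge\gamma-1$, that each invocation of clause (c) moves its entry column strictly to the right, and that, writing $V$ for the phase in which it finally deposits a cell, it alters exactly the single cell it deposits, on the diagonal $x+y=V$, and inspects no diagonal above $x+y=V$ (above $x+y=V+1$ when it instead fails).

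With this in hand the first, easy, dichotomy is by the deposit phase $V_\alpha$ of the $\alpha$-slide. If $V_\alpha\le\beta-2$ then the entire $\alpha$-slide lives on diagonals $\le\beta-2$, whereas the $\beta$-slide only ever inspects diagonals $\ge\beta-1$ and deposits on a diagonal $\ge\beta$; the two operations are supported on disjoint diagonal bands, neither reads the cell written by the other, and the two composites agree cell for cell. The substantive case is $V_\alpha\ge\beta-1$, in which the ascending zigzag of the $\alpha$-slide has climbed into the band scanned by the $\beta$-slide. Here I use the central structural observation: to reach deposit phase $V_\alpha$ the $\alpha$-slide passed through entry columns $a_0<a_1<\cdots<a_t$, the last being the deposit column, at phases $\alpha,\alpha+1,\dots,V_\alpha=\alpha+t$, and clause (c) at phase $\alpha+r$ (for $r<t$) forces the tower $\mathcal T_{a_r}$ to reach diagonal $\alpha+r+1$. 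Taking $r=t-1$, the tower $\mathcal T_{a_{t-1}}$, lying strictly to the left of the deposit column $a_t$, already carries a cell on diagonal $\alpha+t=V_\alpha$. Consequently the $\alpha$-slide's deposit on diagonal $V_\alpha\ge\beta-1$ is never the leftmost occupied cell on that diagonal, so it cannot alter the entry column of the $\beta$-slide nor the (a)/(b)/(c) branch the $\beta$-slide takes there.

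The plan is then to show this non-disturbance is mutual and persists through the recursion: the $\beta$-slide's own climb likewise stays strictly to the right of its entry column $b_0$, which by the previous paragraph sits weakly to the left of the shielding tower $\mathcal T_{a_{t-1}}$, so the two staircases remain separated by a full diagonal and neither slide ever tests a cell that the other has created. Granting this separation, depositing the two cells in either order leaves each of them a corner cell with its intended flight number, whence Lemma~\ref{lem:flightVSslide} identifies $\alpha^{\searrow}(\beta^{\searrow}\mathcal T)$ and $\beta^{\searrow}(\alpha^{\searrow}\mathcal T)$ as the same diagram; the same separation shows that if one composite terminates without a result then so does the other, settling the remaining alternative of the statement. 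The hard part will be precisely this simultaneous control of the two climbing staircases, namely proving that under $\beta-\alpha\ge 2$ they can never collide or exchange the role of the tested cell. This is exactly where the gap hypothesis is indispensable, the separation collapsing to a single diagonal only in the excluded case $|\alpha-\beta|=1$, which is why that case instead produces the braid relation treated in the following lemma.
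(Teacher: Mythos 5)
You have the easy regime right, and your structural observation is both true and useful: if the $\alpha$-slide deposits at phase $V_\alpha$ after passing through entry columns $a_0<\cdots<a_t$, then clause (c) at phase $V_\alpha-1$ forces the tower at $a_{t-1}$ to carry a cell on the diagonal $x+y=V_\alpha$, so the deposited cell is never the leftmost cell of its diagonal and the \emph{first} step of the $\beta$-slide is undisturbed. The genuine gap is the one you flag yourself: ``the plan is then to show this non-disturbance is mutual and persists through the recursion,'' ``granting this separation,'' and ``the hard part will be precisely this simultaneous control of the two climbing staircases.'' That simultaneous control \emph{is} the lemma, and it is nowhere proved. The single $r=t-1$ shielding observation does not suffice for later phases. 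Concretely, suppose $V_\alpha=\beta$ and the $\beta$-slide takes clause (c) at its entry column $b_0$: at phase $\beta+1$ it searches the towers \emph{strictly to the right of} $b_0$ for the leftmost cell on the diagonal $x+y=\beta$, which is exactly the diagonal of the $\alpha$-deposit. Your facts give only $b_0\le a_{t-1}$; if $b_0=a_{t-1}$ the shielding tower is excluded from this truncated search, and nothing you have established prevents the $\beta$-slide from being captured by the $\alpha$-deposit, which would change its path. (This capture is in fact impossible, but only because of a stronger fact you never state: when $V_\alpha=\beta$, clause (c) of the $\alpha$-slide at phase $\beta-2$ produces a cell on diagonal $x+y=\beta-1$ in the tower $a_{t-2}$, forcing $b_0\le a_{t-2}<a_{t-1}$.) What is needed is a phase-indexed invariant, proved by simultaneous induction on the phase: at every common phase $\gamma$, the $\beta$-slide's entry column is at most the $\alpha$-slide's entry column at phase $\gamma-2$, whose clause-(c) tower then shields every later search; together with its mirror image protecting the $\alpha$-slide from the $\beta$-deposit. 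In addition, your dichotomy is organized around the deposit phase $V_\alpha$ and so tacitly assumes the $\alpha$-slide succeeds; the terminating alternatives (either slide failing via clause (b), in either order) constitute half of the statement and are never argued beyond an appeal to the same unproven separation.

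For comparison, the paper avoids any such invariant by following the recursive definition itself: after reducing to $\beta\ge\alpha+2$, it splits into (S1)/(S2) and the subcases (a), (b), (c), computes both composites explicitly in the deposit and termination subcases, and in the double-recursion subcase notes that both slides descend into the same strictly smaller suffix with values $\alpha+1$ and $\beta+1$ (or with a gap that the geometry of the entry tower forces to be at least $2$), so that induction on the number of cells closes the argument, the terminating cases included. Your band-and-staircase picture is genuinely different and arguably more conceptual, and it could be completed along the lines sketched above; but as written it is a program, with the central inductive claim and the termination analysis missing.
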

The remaining case is where the numbers that are to be slided are close to each other
is given in the following result.

\begin{lem}\label{Lemma:SlidingClose}
Let $\mathcal{T}=(\mathcal{T}_1,\mathcal{T}_2,\ldots)$ be a tower diagram and $\alpha$  be a positive integer. Then either the equality
$$\alpha^{\searrow}((\alpha+1)^{\searrow}(\alpha
^{\searrow}\mathcal{T}))= (\alpha+1)^{\searrow}(\alpha^{\searrow}
((\alpha+1)^{\searrow}\mathcal{T}))$$
holds or both slides terminate.
\end{lem}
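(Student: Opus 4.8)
The plan is to argue by induction on the size of $\mathcal T$, exploiting the fact that the slide of Definition~\ref{def:sliding} always recurses \emph{rightward} and \emph{upward}: in the branching clauses \textbf{(S1)}(c) and \textbf{(S2)}(c) the value increases by one and the diagram is replaced by a proper suffix $(\mathcal T_{i+1},\ldots)$ of strictly smaller size, so the outcome of a single slide $\alpha^\searrow\mathcal T$ --- whether it terminates and, if not, which corner cell it appends --- is determined entirely by the towers from the first relevant one rightward. Two structural facts drive everything. First, a slide of value $v\ge\alpha$ only ever deposits on a diagonal $x+y\ge\alpha$; hence the cells lying on the diagonal $x+y=\alpha-1$ are \emph{never} altered by any of the six slides, so the interface between the two slide values is confined to the single diagonal $x+y=\alpha$. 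Second, since every value occurring is in $\{\alpha,\alpha+1\}$ and clause (c) shifts both values up to $\{\alpha+1,\alpha+2\}$, the recursion stays inside the adjacent regime, so the induction hypothesis is precisely this same Lemma on a smaller diagram. Throughout, I would use Lemma~\ref{lem:flightVSslide} to identify each deposited cell as the corner cell whose flight number equals the slide value.

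First I would perform a common reduction. Let $i_0$ be the least index for which the tower $\mathcal T_{i_0}$ contains a cell on $x+y=\alpha-1$. Because towers are filled from the bottom, every tower $\mathcal T_q$ with $q<i_0$ is too short to meet either of the diagonals $x+y=\alpha-1$ or $x+y=\alpha$, so this prefix is inert for both values under all six slides; moreover, by the invariance of the diagonal $x+y=\alpha-1$ noted above, $i_0$ does not change during the computation. Using the concatenation clauses of Definition~\ref{def:sliding} (together with the Remark that discarding leftmost towers preserves flight numbers) I would peel off $(\mathcal T_1,\ldots,\mathcal T_{i_0-1})$ simultaneously from both sides of the claimed identity, reducing to the case where the first relevant tower sits at the left edge. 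If no tower meets $x+y=\alpha-1$ at all, the reduction instead lands directly in the \textbf{(S1)} clauses, which I would dispatch as a short separate base case.

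With the prefix removed, I would run the core case analysis on the local configuration of cells on the two diagonals $x+y=\alpha-1$ and $x+y=\alpha$ at the first relevant tower(s). For each configuration one selects the applicable clause \textbf{(S1)}(a)/(b)/(c) or \textbf{(S2)}(a)/(b)/(c) for the first slide, records the cell it appends or its termination, then re-reads the configuration exposed to the second and third slides. In the recursive subcases (c) one invokes the induction hypothesis on the strictly smaller suffix; in the remaining subcases one computes the left-hand and right-hand triple slides explicitly and checks that they either append the same cell or both abort. The deposited cells stay confined to the narrow band $\alpha\le x+y\le\alpha+2$, which keeps the number of essentially distinct configurations finite.

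The main obstacle, and the genuine content of the statement, is the \emph{coupling} across the diagonal $x+y=\alpha$: a slide of value $\alpha$ deposits on $x+y=\alpha$, exactly the diagonal inspected by a slide of value $\alpha+1$. Thus on the left the first $\alpha$-slide can alter the ``first tower meeting $x+y=\alpha$'' seen by the following $(\alpha+1)$-slide, and symmetrically on the right; reconciling these two bookkeepings is where the identity is nontrivial, being the diagram analogue of $s_\alpha s_{\alpha+1}s_\alpha=s_{\alpha+1}s_\alpha s_{\alpha+1}$. The most delicate subcases are those in which a deposit triggers a zigzag carrying the flight one diagonal higher; here I would appeal to Lemma~\ref{lem:ax-zigzag} to guarantee that vertically adjacent cells of a tower have flight numbers differing by exactly one, which is what forces the two sides to terminate simultaneously and, when they do not, to append cells in the same column at the same height. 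Assembling these subcases closes the induction.
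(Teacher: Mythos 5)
Your plan is essentially the paper's own proof: after noting that the diagonal $x+y=\alpha-1$ is invariant under all six slides, the paper likewise reduces to the first tower $\mathcal{T}_i$ containing a cell on that diagonal (treating \textbf{(S1)} as a degenerate variant of \textbf{(S2)}), runs the case analysis on which of $(i,\alpha-i)$, $(i,\alpha-i+1)$, $(i,\alpha-i+2)$ lie in $\mathcal{T}_i$, computes the non-recursive cases explicitly, and in the single pass-through case applies the same lemma inductively to the suffix $(\mathcal{T}_{i+1},\ldots)$ with the values shifted to $\alpha+1,\alpha+2$ --- precisely the structure you describe. The one harmless divergence is your appeal to Lemma~\ref{lem:ax-zigzag} for the ``delicate'' termination subcases: the paper settles those directly from clauses \textbf{(S2)}(b)/(c) without any flight-number argument, and since your stated fallback is explicit computation of the triple slides, nothing in your plan actually depends on that lemma.
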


Now we introduce a sliding and recording (SR) algorithm (sliding algorithm, for short)
on the set of finite words on $\mathbb{Z}^+$ which produces a pair of
tower tableaux of the same shape, whenever the algorithm does  not
terminate. We shall prove that one of these tableaux is canonically determined by the
shape of the tableau whereas the other is standard. In particular, we shall obtain a
criterion on words to be in the image of the reading function
defined in Section \ref{Section:towerdiagrams}.

\begin{defn} Let
$\alpha=\alpha_1\alpha_2\ldots\alpha_n$ be a word on $\mathbb{Z}^+$.
Then SR (\textbf{\textit{sliding}} and \textbf{\textit{recording}}) algorithm on $\alpha$ produces, if it does not terminate, two tower tableaux of the same shape, the \textbf{\textit{sliding tableau}} $S(\alpha)$ and the \textbf{\textit{recording tableau}} $R(\alpha)$. These tableaux are obtained through a sequence of pairs of the \textit{same shape} tower tableaux
$$ (S_1,R_1),(S_2,R_2)\ldots, (S_n,R_n)=(S(\alpha),R(\alpha))$$
where $S_1=\{[(\alpha_1,0),\alpha_1]\}$ and
$R_1=\{[(\alpha_1,0),1]\}$, and for $1<k\leq n$, $S_k$ (and $R_k$)
is obtained by sliding $\alpha_k$ over  $S_{k-1}$ (and respectively
$R_{k-1}$) by the following rule:

Let $\mathcal{T}_{k-1}:=\mathrm{shape}(S_{k-1})=\mathrm{shape}(R_{k-1})$.
If $\alpha^{\searrow} \mathcal{T}_{k-1}:= \mathcal{T}_{k-1} \cup \{ (i,j)\}$ then we
put
$$ \begin{aligned}S_k:=& \alpha^{\searrow} S_{k-1}=S_{k-1}\cup \{ [(i,j),i+j]\} \\
 R_k:=&\alpha^{\searrow} R_{k-1}= R_{k-1}\cup \{ [(i,j),k]\}
\end{aligned}.$$
Otherwise SR algorithm terminates without a result.

We denote the set consisting of all words on $\mathbb{Z}^+$ on which
$\mathrm{SR}$ algorithm does not terminate by
$$\mathrm{SRW}(\mathbb{Z}^+).$$
\end{defn}

\begin{ex} Sliding and recording algorithm applied on  the word $\alpha=784534561$
gives the following tower tableaux $S(\alpha)$ and $R(\alpha)$, respectively.
\begin{center}
\begin{picture}(80,60)
\put(-40,30){$S(\alpha) = $}
\multiput(0,0)(0,0){1}{\line(1,0){80}}
\multiput(0,0)(0,0){1}{\line(0,1){60}}
\multiput(0,10)(2,0){40}{\line(0,1){.1}}
\multiput(0,20)(2,0){40}{\line(0,1){.1}}
\multiput(0,30)(2,0){40}{\line(0,1){.1}}
\multiput(0,40)(2,0){40}{\line(0,1){.1}}
\multiput(0,50)(2,0){40}{\line(0,1){.1}}
\multiput(0,60)(2,0){40}{\line(0,1){.1}}
\multiput(10,0)(0,2){30}{\line(1,0){.1}}
\multiput(20,0)(0,2){30}{\line(1,0){.1}}
\multiput(30,0)(0,2){30}{\line(1,0){.1}}
\multiput(40,0)(0,2){30}{\line(1,0){.1}}
\multiput(50,0)(0,2){30}{\line(1,0){.1}}
\multiput(60,0)(0,2){30}{\line(1,0){.1}}
\multiput(70,0)(0,2){30}{\line(1,0){.1}}
  \put(0,0){\tableau{{1}}}
\put(20,30){\tableau{{6}\\{5}\\{4}\\{3}}}
\put(30,10){\tableau{{5}\\{4}}} \put(60,10){\tableau{{8}\\{7}}}
\end{picture}\hskip.7in
\begin{picture}(80,60)
\put(-40,30){$R(\alpha)=$} \multiput(0,0)(0,0){1}{\line(1,0){80}}
\multiput(0,0)(0,0){1}{\line(0,1){60}}
\multiput(0,10)(2,0){40}{\line(0,1){.1}}
\multiput(0,20)(2,0){40}{\line(0,1){.1}}
\multiput(0,30)(2,0){40}{\line(0,1){.1}}
\multiput(0,40)(2,0){40}{\line(0,1){.1}}
\multiput(0,50)(2,0){40}{\line(0,1){.1}}
\multiput(0,60)(2,0){40}{\line(0,1){.1}}
\multiput(10,0)(0,2){30}{\line(1,0){.1}}
\multiput(20,0)(0,2){30}{\line(1,0){.1}}
\multiput(30,0)(0,2){30}{\line(1,0){.1}}
\multiput(40,0)(0,2){30}{\line(1,0){.1}}
\multiput(50,0)(0,2){30}{\line(1,0){.1}}
\multiput(60,0)(0,2){30}{\line(1,0){.1}}
\multiput(70,0)(0,2){30}{\line(1,0){.1}}
  \put(0,0){\tableau{{9}}}
\put(20,30){\tableau{{8}\\{7}\\{6}\\{5}}}
\put(30,10){\tableau{{4}\\{3}}} \put(60,10){\tableau{{2}\\{1}}}
\end{picture}
\end{center}

Let $\mathcal{T}=\mathrm{shape}(S(\alpha))=\mathrm{shape}(R(\alpha))$. Recall from the previous example that
the only numbers whose sliding into
$\mathcal{T}$ do not terminates are $2$, $3$, $5$, $7$, $9$ and any integer $>9$.  Below we illustrate the sliding
of these numbers into  $S(\alpha)$ respectively. Note that for an integer $k\geq 9$ the sliding of $k$ into $S(\alpha)
$ is
obtained by adding $[(k,0),k]$ to $S$. For finding the recording tableaux, on the other hand, one only need to label the new cell in each tableaux  by $10$.

\vskip.1in
\begin{center}
\begin{picture}(80,60)
\multiput(0,0)(0,0){1}{\line(1,0){80}}
\multiput(0,0)(0,0){1}{\line(0,1){60}}
\multiput(0,10)(2,0){40}{\line(0,1){.1}}
\multiput(0,20)(2,0){40}{\line(0,1){.1}}
\multiput(0,30)(2,0){40}{\line(0,1){.1}}

\multiput(0,40)(2,0){40}{\line(0,1){.1}}
\multiput(0,50)(2,0){40}{\line(0,1){.1}}
\multiput(0,60)(2,0){40}{\line(0,1){.1}}
\multiput(10,0)(0,2){30}{\line(1,0){.1}}
\multiput(20,0)(0,2){30}{\line(1,0){.1}}
\multiput(30,0)(0,2){30}{\line(1,0){.1}}
\multiput(40,0)(0,2){30}{\line(1,0){.1}}
\multiput(50,0)(0,2){30}{\line(1,0){.1}}
\multiput(60,0)(0,2){30}{\line(1,0){.1}}
\multiput(70,0)(0,2){30}{\line(1,0){.1}}
 \put(0,0){\tableau{{1}}}
\put(20,30){\tableau{{6}\\{5}\\{4}\\{3}}}
\put(30,10){\tableau{{5}\\{4}}} \put(60,10){\tableau{{8}\\{7}}}
 \put(2,10){\bf2}
\multiput(-10,30)(0,0){1}{\vector(1,-1){15}}
\end{picture}\hskip.15in
\begin{picture}(80,60)
\multiput(0,0)(0,0){1}{\line(1,0){80}}
\multiput(0,0)(0,0){1}{\line(0,1){60}}
\multiput(0,10)(2,0){40}{\line(0,1){.1}}
\multiput(0,20)(2,0){40}{\line(0,1){.1}}
\multiput(0,30)(2,0){40}{\line(0,1){.1}}
\multiput(0,40)(2,0){40}{\line(0,1){.1}}
\multiput(0,50)(2,0){40}{\line(0,1){.1}}
\multiput(0,60)(2,0){40}{\line(0,1){.1}}
\multiput(10,0)(0,2){30}{\line(1,0){.1}}
\multiput(20,0)(0,2){30}{\line(1,0){.1}}
\multiput(30,0)(0,2){30}{\line(1,0){.1}}
\multiput(40,0)(0,2){30}{\line(1,0){.1}}
\multiput(50,0)(0,2){30}{\line(1,0){.1}}
\multiput(60,0)(0,2){30}{\line(1,0){.1}}
\multiput(70,0)(0,2){30}{\line(1,0){.1}}
 \put(0,0){\tableau{{1}}}
\put(20,30){\tableau{{6}\\{5}\\{4}\\{3}}}
\put(30,10){\tableau{{5}\\{4}}} \put(60,10){\tableau{{8}\\{7}}}
 \put(42,1){\bf5}
\multiput(-10,40)(0,0){1}{\line(1,-1){35}}
\multiput(25,5)(0,0){1}{\line(0,1){10}}
\multiput(25,15)(0,0){1}{\line(1,-1){10}}
\multiput(35,5)(0,0){1}{\line(0,1){10}}
\multiput(35,15)(0,0){1}{\vector(1,-1){10}}
\end{picture}\hskip.15in
\hskip.15in
\begin{picture}(80,60)
\multiput(0,0)(0,0){1}{\line(1,0){80}}
\multiput(0,0)(0,0){1}{\line(0,1){60}}
\multiput(0,10)(2,0){40}{\line(0,1){.1}}
\multiput(0,20)(2,0){40}{\line(0,1){.1}}
\multiput(0,30)(2,0){40}{\line(0,1){.1}}
\multiput(0,40)(2,0){40}{\line(0,1){.1}}
\multiput(0,50)(2,0){40}{\line(0,1){.1}}
\multiput(0,60)(2,0){40}{\line(0,1){.1}}
\multiput(10,0)(0,2){30}{\line(1,0){.1}}
\multiput(20,0)(0,2){30}{\line(1,0){.1}}
\multiput(30,0)(0,2){30}{\line(1,0){.1}}
\multiput(40,0)(0,2){30}{\line(1,0){.1}}
\multiput(50,0)(0,2){30}{\line(1,0){.1}}
\multiput(60,0)(0,2){30}{\line(1,0){.1}}
\multiput(70,0)(0,2){30}{\line(1,0){.1}}
 \put(0,0){\tableau{{1}}}
\put(20,30){\tableau{{6}\\{5}\\{4}\\{3}}}
\put(30,10){\tableau{{5}\\{4}}} \put(60,10){\tableau{{8}\\{7}}}
\put(32,21){\bf{{6}}} \multiput(0,50)(0,0){1}{\line(1,-1){25}}
\multiput(25,25)(0,0){1}{\line(0,1){10}}
\multiput(25,35)(0,0){1}{\line(1,-1){10}}
\end{picture}\hskip.15in
\begin{picture}(80,60)
\multiput(0,0)(0,0){1}{\line(1,0){80}}
\multiput(0,0)(0,0){1}{\line(0,1){60}}
\multiput(0,10)(2,0){40}{\line(0,1){.1}}
\multiput(0,20)(2,0){40}{\line(0,1){.1}}
\multiput(0,30)(2,0){40}{\line(0,1){.1}}
\multiput(0,40)(2,0){40}{\line(0,1){.1}}
\multiput(0,50)(2,0){40}{\line(0,1){.1}}
\multiput(0,60)(2,0){40}{\line(0,1){.1}}
\multiput(10,0)(0,2){30}{\line(1,0){.1}}
\multiput(20,0)(0,2){30}{\line(1,0){.1}}
\multiput(30,0)(0,2){30}{\line(1,0){.1}}
\multiput(40,0)(0,2){30}{\line(1,0){.1}}
\multiput(50,0)(0,2){30}{\line(1,0){.1}}
\multiput(60,0)(0,2){30}{\line(1,0){.1}}
\multiput(70,0)(0,2){30}{\line(1,0){.1}}
 \put(0,0){\tableau{{1}}}
\put(20,30){\tableau{{6}\\{5}\\{4}\\{3}}}
\put(30,10){\tableau{{5}\\{4}}} \put(60,10){\tableau{{8}\\{7}}}
 \put(22,41){\bf{{7}}}
\multiput(10,60)(0,0){1}{\vector(1,-1){15}} \end{picture}
\hskip.15in
\begin{picture}(80,60)
\multiput(0,0)(0,0){1}{\line(1,0){80}}
\multiput(0,0)(0,0){1}{\line(0,1){60}}
\multiput(0,10)(2,0){40}{\line(0,1){.1}}
\multiput(0,20)(2,0){40}{\line(0,1){.1}}
\multiput(0,30)(2,0){40}{\line(0,1){.1}}
\multiput(0,40)(2,0){40}{\line(0,1){.1}}
\multiput(0,50)(2,0){40}{\line(0,1){.1}}
\multiput(0,60)(2,0){40}{\line(0,1){.1}}
\multiput(10,0)(0,2){30}{\line(1,0){.1}}
\multiput(20,0)(0,2){30}{\line(1,0){.1}}
\multiput(30,0)(0,2){30}{\line(1,0){.1}}
\multiput(40,0)(0,2){30}{\line(1,0){.1}}
\multiput(50,0)(0,2){30}{\line(1,0){.1}}
\multiput(60,0)(0,2){30}{\line(1,0){.1}}
\multiput(70,0)(0,2){30}{\line(1,0){.1}}
 \put(0,0){\tableau{{1}}}
\put(20,30){\tableau{{6}\\{5}\\{4}\\{3}}}
\put(30,10){\tableau{{5}\\{4}}} \put(60,10){\tableau{{8}\\{7}}}
 \put(62,21){\bf{{9}}}
\multiput(30,60)(0,0){1}{\vector(1,-1){35}}
\end{picture}
\end{center}
\vskip.15in

\end{ex}

Our next aim is to classify the sliding and recording tableaux. The classification of the sliding tableaux is easy.
\begin{lem} For every   $\alpha\in \mathrm{SRW(\mathbb{Z}^+)}$, the sliding tableau  $S(\alpha)$ is canonically determined by its shape.
Moreover  if $\alpha'$ is another word in
$\mathrm{SRW(\mathbb{Z}^+)}$ satisfying
$\mathrm{shape}(S(\alpha))=\mathrm{shape}(S(\alpha'))$ then $\alpha$
and $\alpha'$ has the same size and $S(\alpha)=S(\alpha')$.
\end{lem}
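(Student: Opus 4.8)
The plan is to exploit the fact that, in contrast to the recording tableau, the label that the sliding tableau attaches to a cell depends only on the \emph{position} of the cell and not on the word being slid. Concretely, I claim that for every $\alpha \in \mathrm{SRW}(\mathbb{Z}^+)$ and every cell $(i,j) \in \mathrm{shape}(S(\alpha))$, the label of $(i,j)$ in $S(\alpha)$ equals the diagonal number $i+j$. I would prove this by induction on the length $n$ of $\alpha$, following the inductive construction of the pairs $(S_k, R_k)$. The base case is immediate, since $S_1 = \{[(\alpha_1,0), \alpha_1]\}$ and $\alpha_1 = \alpha_1 + 0$. For the inductive step, if sliding $\alpha_k$ into $\mathrm{shape}(S_{k-1})$ produces the new cell $(i,j)$, then by the definition of the SR algorithm $S_k = S_{k-1} \cup \{[(i,j), i+j]\}$, so the newly recorded label is again $i+j$, while the labels inherited from $S_{k-1}$ are unchanged and correct by the induction hypothesis.

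Granting this claim, the first assertion is immediate: we have $S(\alpha) = \{[(i,j), i+j] \mid (i,j) \in \mathrm{shape}(S(\alpha))\}$, which is determined entirely by the underlying shape. Thus $S(\alpha)$ is recovered from $\mathrm{shape}(S(\alpha))$ by the canonical rule that labels each cell by its diagonal number. For the second assertion, if $\alpha'$ is another word in $\mathrm{SRW}(\mathbb{Z}^+)$ with $\mathrm{shape}(S(\alpha')) = \mathrm{shape}(S(\alpha)) =: \mathcal{T}$, then applying the claim to both words gives $S(\alpha) = \{[(i,j), i+j] \mid (i,j) \in \mathcal{T}\} = S(\alpha')$.

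It remains to match the sizes. Here I would use that $\alpha \in \mathrm{SRW}(\mathbb{Z}^+)$ means the SR algorithm does not terminate on $\alpha$; hence each of its $n = |\alpha|$ slides produces a result, and by the last line of Definition~\ref{def:sliding} each such slide adjoins exactly one new cell to the current shape. Since the conditions under which a cell is created (cases \textbf{(S1)}(a) and \textbf{(S2)}(a)) adjoin a cell not already present, the diagram grows monotonically and the cells added at the $n$ distinct steps are distinct. Therefore $|\mathcal{T}| = n = |\alpha|$, and the same count gives $|\mathcal{T}| = |\alpha'|$, so $\alpha$ and $\alpha'$ have equal size.

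There is no genuine obstacle in this argument: its entire content is the observation that the sliding-tableau label of a cell is the position-dependent quantity $i+j$, which the definition of the algorithm supplies directly. The only points requiring a little care are the bookkeeping in the induction, namely that recording a new cell never disturbs the existing labels, and the verification that a non-terminating slide creates precisely one new cell, both of which are read off Definition~\ref{def:sliding}.
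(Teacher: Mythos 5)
Your proof is correct and follows essentially the same route as the paper's: both rest on the observation, read directly off the definition of the SR algorithm, that the sliding tableau labels each cell $(i,j)$ by its diagonal number $i+j$, so that $S(\alpha)$ is the canonical labelling of its shape, and both obtain the size statement from the fact that each non-terminating slide adjoins exactly one new cell. You merely formalize with an explicit induction what the paper dispatches as immediate from the definition.
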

\begin{proof} The first statement follows from the definition of the sliding tableaux directly. Indeed, it is clear that the cell $(i,j)$ in $S(\alpha)$ has label $i+j$.
For the second statement it is clear that  two tableaux  have the
same number of cells and so do the two words by the sliding algorithm.
The other statement follows from the fact that for any  tower diagram
the associated canonical tower tableau is unique.
\end{proof}
As the lemma shows, the sliding tableau is uniquely determined by
the shape of the corresponding recording tableau. Hence after this point, we
will forget the sliding tableau and concentrate only on the
recording tableau.

Regarding the recording tableaux, the classification is done by
standard labelling as follows.

\begin{lem} Let  $\alpha=\alpha_1\ldots\alpha_n$ be a word in $\mathrm{SRW(\mathbb{Z}^+)}$. Then $R(\alpha)$ is a standard tower tableau.
\end{lem}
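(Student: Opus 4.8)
The plan is to reduce the statement directly to the definition of a standard tower tableau, with all of the real work already carried out in Lemma~\ref{lem:flightVSslide}(a). The crucial bookkeeping observation is that the truncation $R(\alpha)_{\le k}$ of the recording tableau coincides with the intermediate tableau $R_k$ produced by the SR algorithm.

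First I would establish that $R(\alpha)_{\le k} = R_k$ for every $k \in \{1,\ldots,n\}$. This is immediate from the construction: at each step $m$ the algorithm adjoins a single cell bearing the label $m$, so the cells of $R(\alpha)$ whose label is at most $k$ are exactly those adjoined during the first $k$ steps, namely $R_k$. In particular $\mathrm{shape}(R(\alpha)_{\le k}) = \mathcal{T}_k$, the tower diagram reached after $k$ slides. Because $\alpha \in \mathrm{SRW}(\mathbb{Z}^+)$, no slide terminates, so each $\mathcal{T}_k$ is a genuine tower diagram and $R_k$ is a bijective labelling of it by $\{1,\ldots,k\}$; hence every $R(\alpha)_{\le k} = R_k$ is a tower tableau, which supplies the first clause in the definition of standardness.

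It then remains to check that, for each $k$, the cell carrying the label $k$ is a corner cell of $\mathrm{shape}(R(\alpha)_{\le k}) = \mathcal{T}_k$. By construction this cell is precisely the new square $d$ created by the non-terminating slide $\alpha_k^{\searrow}\mathcal{T}_{k-1} = \mathcal{T}_{k-1} \cup \{d\} = \mathcal{T}_k$, and Lemma~\ref{lem:flightVSslide}(a) states exactly that such a $d$ is a corner cell of $\alpha_k^{\searrow}\mathcal{T}_{k-1}$. This is the single substantive input, and it is already available; the rest is the verification that both clauses of the definition now hold simultaneously for every label. I therefore expect no genuine obstacle beyond correctly matching the intermediate tableaux $R_k$ to the truncations $R(\alpha)_{\le k}$, so that Lemma~\ref{lem:flightVSslide}(a) can be invoked term by term to conclude that $R(\alpha)$ is a standard tower tableau.
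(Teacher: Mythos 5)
Your proposal is correct and is in substance the paper's own proof: the paper argues by induction on $n$, but the content of that induction is exactly your two observations, namely that the truncations $R(\alpha)_{\le k}$ coincide with the intermediate recording tableaux $R_k=R(\alpha_1\ldots\alpha_k)$, and that Lemma~\ref{lem:flightVSslide}(a) makes the cell labelled $k$ a corner cell of $\mathrm{shape}(R_k)=\alpha_k^{\searrow}\mathcal{T}_{k-1}$. You have merely unrolled the induction into a direct term-by-term verification, which changes nothing essential.
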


\begin{proof} We will proceed by induction on the size of $\alpha$.
For any word of size one, the recording tableau consists of only
one labelled cell $[(i,0),1]$ for some positive integer $i$, and
clearly it is a standard tower tableau. Suppose that the recording
tableau for any word of size $\leq n-1$ in
$\mathrm{SRW(\mathbb{Z}^+)}$ is a standard tower tableau. Let
$R(\alpha)$ be the recording tableau of
$\alpha=\alpha_1\ldots\alpha_n \in \mathrm{SRW(\mathbb{Z}^+)}$.
Then $\alpha_1\ldots\alpha_{n-1}$ is also in
$\mathrm{SRW(\mathbb{Z}^+)}$ and
$$R(\alpha)=R(\alpha_1\ldots\alpha_{n-1}) \cup [(i,j),n]
$$
for some cell $(i,j)$. Here $R(\alpha_1\ldots\alpha_{n-1})$ is a
standard tower tableau by induction and moreover
$$R(\alpha_1\ldots\alpha_{n-1})=R(\alpha)_{\leq n-1}.$$
Now  the definition of standard tower tableau asserts that for
each $1<k \leq n-1$, the cell labelled by $k$ is a corner cell
in the diagram $R(\alpha_1\ldots\alpha_{n-1})_{\leq
k}= R(\alpha_1\ldots\alpha_{k})$ and hence in $R(\alpha)_{\leq k}$.
So we only  need to show that the cell $[(i,j),n]$  is a corner
cell of  $R(\alpha)_{\leq n}=R(\alpha)$.

Let $\mathrm{shape}(R(\alpha)_{< n})=\mathcal T$. Then
 $$\mathrm{shape}(R(\alpha))=\alpha_n\searrow \mathcal T=\mathcal T \cup \{(i,j)\}.$$
where $(i,j)$ is a corner cell in $\alpha_n\searrow \mathcal T$ by Lemma~\ref{lem:flightVSslide}(a) . Hence
 $[(i,j),n]$  is a corner
cell of  $R(\alpha)$ as required.
\end{proof}

By the above results, we have obtained a function
\[
R:\mathrm{SRW(\mathbb{Z}^+)}\rightarrow \mathrm{STT}
\]
from the set $\mathrm{SRW(\mathbb{Z}^+)}$ of words on which the
sliding algorithm does not terminate to the set $\mathrm{STT}$ of
all standard tower tableaux, defined by sending a word $\alpha$ to
the corresponding standard tower tableaux $R(\alpha)$.

Recall that we have also defined the reading function
\[
\mathrm{Read}:\mathrm{STT} \rightarrow \mathrm{W(\mathbb{Z}^+)}.
\]
By Lemma \ref{lem:flightVSslide}, it is celar that the compositions
$R\circ\mathrm{Read}$ and
$\mathrm{Read}\circ R$ are identity on the sets $\mathrm{STT}$ and
$\mathrm{SRW(\mathbb{Z}^+)}$, respectively. As a result, we obtain the following
theorem.
\begin{thm}\label{thm:srw-stt}
There is a bijective correspondence between
\begin{enumerate}
\item[i)] the set $\mathrm{SRW(\mathbb{Z}^+)}$ of words on which the sliding algorithm does not terminate and
\item[ii)] the set $\mathrm{STT}$ of all standard tower tableaux of all shapes
\end{enumerate}
given by $\alpha\mapsto R(\alpha)$ and $R\mapsto \mathrm{Read}(R)$.
\end{thm}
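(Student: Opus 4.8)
The plan is to show that the two maps $R\colon \mathrm{SRW}(\mathbb{Z}^+)\to \mathrm{STT}$ and $\mathrm{Read}\colon \mathrm{STT}\to \mathrm{W}(\mathbb{Z}^+)$ are mutually inverse. The preceding two lemmas already guarantee that $R$ lands in $\mathrm{STT}$ and that $\mathrm{Read}$ takes values in $\mathrm{W}(\mathbb{Z}^+)$, so it suffices to verify that $\mathrm{Read}\circ R$ is the identity on $\mathrm{SRW}(\mathbb{Z}^+)$ and that $R\circ\mathrm{Read}$ is the identity on $\mathrm{STT}$; the second of these in particular forces $\mathrm{Read}$ to land inside $\mathrm{SRW}(\mathbb{Z}^+)$. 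The engine behind both directions is Lemma~\ref{lem:flightVSslide}, which says precisely that sliding a positive integer into a diagram and reading off the flight number of the cell it creates are inverse operations.

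For the first composition, fix $\alpha=\alpha_1\cdots\alpha_n\in\mathrm{SRW}(\mathbb{Z}^+)$ and follow the recording sequence $R_1,\ldots,R_n=R(\alpha)$. At step $k$ the cell $(i_k,j_k)$ labelled by $k$ is created by sliding $\alpha_k$ into $\mathcal{T}_{k-1}=\mathrm{shape}(R_{k-1})$, so that $\mathrm{shape}(R(\alpha)_{\le k})=\alpha_k^{\searrow}\mathcal{T}_{k-1}=\mathcal{T}_{k-1}\cup\{(i_k,j_k)\}$. By Lemma~\ref{lem:flightVSslide}(a) the cell $(i_k,j_k)$ is a corner cell of this shape with flight number exactly $\alpha_k$. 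Since $\mathrm{Read}$ computes the $k$-th letter as $\mathrm{flight}\#((i_k,j_k),\mathrm{shape}(R(\alpha)_{\le k}))$, we recover $\alpha_k$ in position $k$ for every $k$, whence $\mathrm{Read}(R(\alpha))=\alpha$.

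For the second composition I would argue by induction on the size $n$ of the standard tower tableau $R$, the case $n\le 1$ being immediate. Write $\mathrm{Read}(R)=\alpha_1\cdots\alpha_n$ and let $(i_n,j_n)$ be the cell labelled $n$. By standardness $(i_n,j_n)$ is a corner cell of $\mathrm{shape}(R)$ and $\alpha_n$ is its flight number, so Lemma~\ref{lem:flightVSslide}(b) gives $\alpha_n^{\searrow}\big((i_n,j_n)^{\nwarrow}\mathrm{shape}(R)\big)=\mathrm{shape}(R)$, and in particular this slide does not terminate. Now $(i_n,j_n)^{\nwarrow}\mathrm{shape}(R)=\mathrm{shape}(R_{\le n-1})$, and the truncation $R_{\le n-1}$ is again standard. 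Because the definition of $\mathrm{Read}$ uses only the intrinsic shapes $\mathrm{shape}(R_{\le k})$ for $k\le n-1$, the reading word of $R_{\le n-1}$ is exactly the prefix $\alpha_1\cdots\alpha_{n-1}$. The induction hypothesis then yields $R(\alpha_1\cdots\alpha_{n-1})=R_{\le n-1}$, and appending the slide of $\alpha_n$ rebuilds the cell $(i_n,j_n)$ with label $n$, so $R(\mathrm{Read}(R))=R$.

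The two directions together establish the stated bijection. The only genuine obstacle is packaged entirely into Lemma~\ref{lem:flightVSslide}, namely the content that a slide traverses the zigzag flight path in reverse, and that the created cell is always a corner cell carrying the prescribed flight number. Once that lemma is available, the rest is the bookkeeping carried out above, the single point requiring care being the observation that the flight number recorded for a given cell depends only on the shape of the corresponding initial subtableau; this makes truncation compatible with $\mathrm{Read}$ and lets the induction close cleanly.
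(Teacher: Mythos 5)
Your proposal is correct and takes essentially the same approach as the paper: the paper's own argument is the single sentence preceding the theorem, which cites Lemma~\ref{lem:flightVSslide} to assert that $\mathrm{Read}\circ R$ and $R\circ\mathrm{Read}$ are identities on $\mathrm{SRW}(\mathbb{Z}^+)$ and $\mathrm{STT}$ respectively. Your write-up is a careful expansion of exactly that claim, with the step-by-step use of part (a) for one composition and the induction on tableau size using part (b) for the other supplying the bookkeeping (including the compatibility of $\mathrm{Read}$ with truncation) that the paper leaves implicit.
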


\section{Recording tableaux and reduced words}
\label{Section:recordAndReduced}
Our main result on standard tower tableaux is that they parametrize the
reduced decompositions of permutations. Finally we are ready to
prove this result. More precisely, we prove that the set
$\mathrm{SRW}(\mathbb Z^+)$ of words on which the sliding
algorithm does not terminate coincide with the reduced words for
permutations, see Theorem \ref{thm:SRW2RW}.

Note further that, the set $\mathrm{STT}$ of standard tower
tableaux of all shapes has a natural partition according to
shapes. On the other hand, the set $\mathrm{SRW}(\mathbb Z^+)$ has
a natural partition into classes according to the corresponding
permutation, as described below. The other main result of this
section is that the functions defined in Theorem \ref{thm:srw-stt} preserve these
partitions.

Combining these two results, we conclude that a tower diagram determines a unique
permutation. Vice versa the set of all reduced decompositions of a permutation
is in bijection with the set of all standard tower tableaux of the
unique shape determining the permutation we start with.

Now we argue to prove the above results.
The symmetric group $S_n$ is generated by the set of all
adjacent transpositions
$$S:=\{s_i=(i,i+1) \mid 1\leq i \leq n-1\}$$
subject to the following \textbf{\textit{Coxeter {\mbox{\rm (or}} braid{\mbox{\rm )}}
relations}}:
\begin{enumerate}
\item[i)] $s_is_j=s_js_i ~~\textrm{if}~~ \mid i-j\mid \geq 2$
\item[ii)] $s_is_{i+1}s_i=s_{i+1}s_{i}s_{i+1}$
\item[iii)] $s_i s_i=1$
\end{enumerate}
where $1$ represents the identity permutation. For any $w \in
S_n$, an expression $w=s_{i_1}\ldots s_{i_k}$ is called a \textbf{\textit{word}}
representing $w$. The \textbf{\textit{length}} of the permutation $w$, denoted by $l(w)$, is the minimum number of transpositions in a word representing $w$.

Now if $w=s_{i_1}\ldots s_{i_k}$ and $l(w)=k$ then $s_{i_1}\ldots
s_{i_k}$ is said to be a \textbf{\emph{reduced expression}} or a \textbf{\emph{reduced word}} for $w$.

To consider all finite permutations at once, we write $$\varinjlim_{n}S_n$$ for the direct limit of the groups $S_n$ over all $n$.

With this notation, there is a function
\[
s_{[-]}:\mathrm{W}(\mathbb Z^+)\rightarrow \varinjlim_n S_n
\]
given by sending any word $\alpha= \alpha_1\alpha_2\ldots\alpha_n$ over
$\mathbb{Z}^+$ to the permutation represented by the word
$$s_{[\alpha]}:= s_{\alpha_1\alpha_2\ldots\alpha_n}=s_{\alpha_1}s_{\alpha_2}\ldots s_{\alpha_n}.
$$

Our next aim is to relate this function with SR algorithm and the reading
function. First, we show that whenever the SR algorithm does not terminate on a pair
of words $\alpha$ and $\beta$, then the recording tableaux of $\alpha$
and $\beta$ have the same shape if and only if they correspond to the same
permutation under $s_{[?]}$. We state and prove this result as two separate theorems.
We first have the following part.

\begin{thm}\label{thm:braid2shape} Let $\alpha=\alpha_1 \ldots \alpha_n$ and
$\beta=\beta_1 \ldots \beta_n$ be two words
 in $\mathrm{SRW}(\mathbb{Z}^+)$. If $s_{[\alpha]}=s_{[\beta]}$
then $\mathrm{shape}(R(\alpha))=\mathrm{shape}(R(\beta))$.
\end{thm}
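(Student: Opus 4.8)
The plan is to reduce the theorem to the two braid/commutation relations, since $s_{[\alpha]}=s_{[\beta]}$ means $\alpha$ and $\beta$ are connected by a finite sequence of applications of the Coxeter relations (i) and (ii)—relation (iii) cannot occur here since both words are reduced (they lie in $\mathrm{SRW}(\mathbb{Z}^+)$). Because $\mathrm{shape}(R(\alpha))$ is built up by successively sliding the letters of $\alpha$ into the evolving tower diagram, it suffices to show that each single elementary relation, applied at any position of a word, leaves the final shape unchanged. Thus I would first argue that the shape $\mathrm{shape}(R(\alpha))$ depends only on the sequence of slides $\alpha_n^\searrow(\cdots(\alpha_2^\searrow(\alpha_1^\searrow\varnothing)))$, and that the relevant statement is local: replacing a consecutive pair or triple of letters according to a Coxeter relation does not affect the resulting tower diagram.

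Next I would make this local claim precise. Let $\mathcal{T}=\mathrm{shape}(R(\alpha_1\ldots\alpha_{m}))$ be the intermediate diagram obtained just before the letters involved in the relation. For relation (i), with $|\alpha-\beta|\ge 2$, I would invoke Lemma~\ref{Lemma:SlidingFarAway} to conclude that
$$
\alpha^{\searrow}(\beta^{\searrow}\mathcal{T})=\beta^{\searrow}(\alpha^{\searrow}\mathcal{T}),
$$
so swapping two far-apart adjacent letters does not change the shape. For relation (ii), I would invoke Lemma~\ref{Lemma:SlidingClose} to conclude that
$$
\alpha^{\searrow}\big((\alpha+1)^{\searrow}(\alpha^{\searrow}\mathcal{T})\big)=(\alpha+1)^{\searrow}\big(\alpha^{\searrow}((\alpha+1)^{\searrow}\mathcal{T})\big),
$$
so replacing $\alpha(\alpha+1)\alpha$ by $(\alpha+1)\alpha(\alpha+1)$ leaves the shape fixed. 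In both lemmas the alternative is that both sides terminate; but since $\alpha,\beta\in\mathrm{SRW}(\mathbb{Z}^+)$ the full slides do not terminate, and the letters following the modified block are slid into identical diagrams in either order, so the nonterminating branch of each lemma is the one that applies. After the modified block, the two computations proceed from the same intermediate shape, and the remaining letters are identical, so the final shapes agree.

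With the local invariance established, I would finish by induction on the number of elementary Coxeter moves in a fixed reduction path from $\alpha$ to $\beta$: each move preserves membership in $\mathrm{SRW}(\mathbb{Z}^+)$ (so the intermediate words are legitimate) and preserves the final shape by the two cases above; composing the moves gives $\mathrm{shape}(R(\alpha))=\mathrm{shape}(R(\beta))$. The main obstacle I anticipate is the bookkeeping needed to isolate the relation-affected block inside the full slide sequence and to apply the two sliding lemmas at a \emph{nested} position rather than at the outermost slide—specifically, justifying that the letters after the block see the same diagram regardless of which side of the relation was used, and that the ``both slides terminate'' escape clause in Lemmas~\ref{Lemma:SlidingFarAway} and~\ref{Lemma:SlidingClose} is ruled out by the non-termination hypothesis. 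Care is also needed to confirm that each intermediate word produced along the reduction path still lies in $\mathrm{SRW}(\mathbb{Z}^+)$, so that the lemmas are always applicable.
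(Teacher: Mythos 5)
Your proposal is correct and takes essentially the same approach as the paper: both arguments reduce to the case of a single Coxeter move of type (i) or (ii), invoke Lemma~\ref{Lemma:SlidingFarAway} for commuting letters and Lemma~\ref{Lemma:SlidingClose} for the relation $s_is_{i+1}s_i=s_{i+1}s_is_{i+1}$ applied to the intermediate diagram, and use membership in $\mathrm{SRW}(\mathbb{Z}^+)$ to rule out the ``both slides terminate'' branch of each lemma, so that SRW-membership and the final shape propagate along the chain of moves. The only difference is bookkeeping: you apply the lemmas at the position of the modified block and note that the subsequent letters are slid into identical diagrams, whereas the paper runs an induction on word length, peeling off the last letter whenever the braid relation does not involve it, so that the two lemmas are only ever applied at the tail of the word --- an equivalent device.
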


\begin{proof} We proceed by induction on the size of the words. One can easily prove the hypothesis for the words of size $\leq 3$. So assume that hypothesis is true for the words of size $\leq n-1$.

Since $s_{[\alpha]}=s_{[\beta]}$, the words
$s_{\alpha_1}s_{\alpha_2}\ldots s_{\alpha_n}$ and
$s_{\beta_1}s_{\beta_2}\ldots s_{\beta_n}$ must be related by a sequence
of braid relations. On the other hand, in order to prove the claim, it is enough to consider the case where they are related by only one braid relation.

Now if $\alpha_n=\beta _n$ then we still have that the words
$s_{\alpha_1}s_{\alpha_2}\ldots s_{\alpha_{n-1}}$ and
$s_{\beta_1}s_{\beta_2}\ldots s_{\beta_{n-1}}$ are braid related and
by induction,
$\mathrm{shape}(R(\alpha_1\ldots\alpha_{n-1}))= \mathrm{shape}(R(\beta_1\ldots\beta_{n-1}))$. We denote the common shape by $\mathcal{T}$.

On the other hand, we have the equalities
$$R(\alpha)=\alpha_n^{\searrow}R( \alpha_{1}\ldots \alpha_{n-1})
\text{ and } R(\beta)=\alpha_n^{\searrow}R( \beta_{1}\ldots
\beta_{n-1}).$$ Moreover these tableaux are determined by $\mathcal{T}$ and by the
number $\alpha_n$. Therefore the shapes of
$\mathrm{shape}(R(\beta))$ and $\mathrm{shape}(R(\alpha))$ are the same.

Next we assume that $\alpha_n\not =\beta_n$. Then we have two cases:

\textit{Case 1.} The words $s_{[\alpha]}$ and $s_{[\beta]}$ are related by a single
relation of the first type, that is, for some $1\leq i,j \leq n-1$ satisfying
$\mid {i}-{j}\mid \geq 2$, we have $s_{\alpha_{n-1}}=s_i$,
$s_{\alpha_{n}}=s_j$ and
$$\begin{aligned}
s_{[\alpha]}=&s_{\alpha_1 }\ldots s_{\alpha_{n-2}} s_{i}
s_{j} \\
s_{[\beta]}=&s_{\alpha_1 }\ldots s_{\alpha_{n-2}} s_{j} s_{i}.
\end{aligned}
$$

Let $\mathcal{T}= \mathrm{shape}(R( \alpha_{1}\ldots \alpha_{n-2}))$.
Observe that in order to prove our claim, it is enough to show that the equality
$$j^{\searrow}i^{\searrow}\mathcal{T}= i^{\searrow}
j^{\searrow}\mathcal{T}$$ holds whenever $\mid {i}-{j}\mid \geq 2$.
But this follows directly from Lemma \ref{Lemma:SlidingFarAway}.

\textit{Case 2.} The words $s_{[\alpha]}$ and $s_{[\beta]}$ are related by a single
relation of the second type, that is, for some $1\leq i \leq n-1$,
$s_{\alpha_{n-2}}=s_{\alpha_{n}}=s_i$, $s_{\alpha_{n-1}}=s_{i+1}$
and
$$\begin{aligned}
s_{[\alpha]}=&s_{\alpha_1 }\ldots s_{\alpha_{n-3}} s_{i} s_{i+1}
s_{i} \\
s_{[\beta]}=&s_{\alpha_1 }\ldots \ldots s_{\alpha_{n-3}} s_{i+1} s_{i}
s_{i+1}.
\end{aligned}
$$
Let $\mathcal{T}= \mathrm{shape}(R( \alpha_{1}\ldots \alpha_{n-3}))$.
Observe that in order to prove our claim, it is enough to show that the equality
$$i^{\searrow}(i+1)^{\searrow}i ^{\searrow}\mathcal{T}= (i+1)^{\searrow}i^{\searrow} (i+1)^{\searrow}\mathcal{T}$$ holds.
But this follows from Lemma \ref{Lemma:SlidingClose}.
\end{proof}

The next theorem provides the converse for the above theorem.

\begin{thm}\label{thm:shape2braid} Let $\alpha=\alpha_1 \ldots \alpha_n$ and
$\beta=\beta_1 \ldots \beta_n$ be two words
 in $\mathrm{SRW}(\mathbb{Z}^+)$. If $\mathrm{shape}(R(\alpha))=\mathrm{shape}(R(\beta))$
then $s_{[\alpha]}=s_{[\beta]}$.
\end{thm}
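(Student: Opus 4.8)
The plan is to deduce everything from the bijection of Theorem~\ref{thm:srw-stt}. Since $\mathrm{Read}\circ R$ is the identity on $\mathrm{SRW}(\mathbb{Z}^+)$, we have $\alpha=\mathrm{Read}(R(\alpha))$ and $\beta=\mathrm{Read}(R(\beta))$. Writing $T=R(\alpha)$ and $T'=R(\beta)$, the claim becomes the purely combinatorial assertion that \emph{any two standard tower tableaux $T,T'$ of one and the same shape $\mathcal{T}$ satisfy $s_{[\mathrm{Read}(T)]}=s_{[\mathrm{Read}(T')]}$}. I would prove this by induction on the size $n$ of $\mathcal{T}$, checking the cases $n\le 3$ by hand.

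For the inductive step I would look at the cell carrying the largest label $n$ in each tableau; these are corner cells of $\mathcal{T}$. The easy case is when $T$ and $T'$ place $n$ in the \emph{same} corner cell $c$. Then $T_{\le n-1}$ and $T'_{\le n-1}$ are standard tower tableaux of the common shape $c^{\nwarrow}\mathcal{T}$, so by induction their reading words represent the same permutation; moreover the final letters of $\mathrm{Read}(T)$ and $\mathrm{Read}(T')$ both equal $\mathrm{flight}\#(c,\mathcal{T})$, a quantity fixed by $\mathcal{T}$ alone via Lemma~\ref{lem:flightVSslide}(a). Appending this common letter preserves the permutation, so this case is finished.

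The heart of the proof is when $T$ and $T'$ place $n$ in \emph{distinct} corner cells $c$ and $c'$. Here I would invoke the inductive hypothesis to freely normalise the bottom $n-1$ labels of each tableau without changing the represented permutation, and then uncover the top cells down to a common base. By Lemma~\ref{lem:flightVSslide}(b) the relevant flight numbers $a=\mathrm{flight}\#(c,\mathcal{T})$ and $b=\mathrm{flight}\#(c',\mathcal{T})$ are exactly the letters whose slides rebuild $\mathcal{T}$ one corner at a time, so the comparison of the two reading words reduces to recognising the change in the top letters as a single Coxeter relation. This splits according to the relative position of $c$ and $c'$, mirroring the two cases of Theorem~\ref{thm:braid2shape}: when $c'$ remains a corner cell of $c^{\nwarrow}\mathcal{T}$ (and symmetrically) the two corners can be uncovered in either order to a common shape $\mathcal{S}=c^{\nwarrow}c'^{\nwarrow}\mathcal{T}$, the cells lie on diagonals at distance $\ge 2$, and the change is a \emph{commutation}, governed by Lemma~\ref{Lemma:SlidingFarAway}; when instead removing one corner destroys the corner status of the other, the cells sit on adjacent diagonals, a genuine three-cell configuration appears, and the change is a \emph{braid relation}, governed by Lemma~\ref{Lemma:SlidingClose}.

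I expect the main obstacle to be exactly this dichotomy: one must verify the \emph{confluence of corner removals}, i.e. decide precisely when two distinct corner cells can be uncovered in either order to the same $\mathcal{S}$, and, in the remaining case, extract the correct three-label window realising the braid relation rather than an illusory two-letter identity (a two-letter window can only produce a commutation). Controlling how the flight number of one corner cell shifts when the other is deleted is where Lemma~\ref{lem:ax-zigzag} and the case analysis behind the sliding lemmas do the real work, and this bookkeeping is the technical crux of the argument.
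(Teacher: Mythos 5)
Your plan coincides with the paper's own proof: the same induction on the number of cells, the same easy case when both tableaux place the label $n$ in the same corner (settled by Lemma~\ref{lem:flightVSslide}(a) plus induction), and the same dichotomy for distinct corners --- a commutation when each corner survives the removal of the other (flight numbers differing by at least $2$, controlled by Lemma~\ref{lem:ax-zigzag}), and a genuine braid relation through a three-cell configuration when it does not. The ``technical crux'' you defer is exactly what the paper's proof carries out: it locates the cell $c_\gamma$ directly below the left corner, shows via Lemma~\ref{lem:ax-zigzag} that the relevant flight numbers form the patterns $i,\,i+1,\,i$ and $i+1,\,i,\,i+1$, and realises the two reading words (up to the already-handled same-corner case) as $u(i)(i+1)(i)$ and $u(i+1)(i)(i+1)$ over a common word $u$, so the Coxeter braid relation finishes the argument.
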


\begin{proof}
We argue by induction on the number of cells in $R(\alpha)$. Let $$\alpha = \alpha_1\alpha_2\ldots \alpha_n, \,\,\, \beta=\beta_1\beta_2\ldots \beta_n$$
and assume the result for all words of length less than $n$.

Let $c_\alpha$ (resp. $c_\beta$) be the cell in $R(\alpha)$ (resp. $R(\beta)$) with label $n$. Note that by this choice, the cells $c_\alpha$ and $c_\beta$ are corner cells in $\mathcal T:=$shape$(R(\alpha))$.

There are several cases to consider. The easy case is when $c_\alpha$ and $c_\beta$ are the same cells in $
\mathcal T$. In this case, by Lemma \ref{lem:flightVSslide} (a), we have the equalities
\[
\alpha_n = \mathrm{flight}\#(c_\alpha, \mathcal T) = \mathrm{flight}\#(c_\beta,
\mathcal T) = \beta_n
\]
and also since the shapes of $R(\alpha_1\ldots\alpha_{n-1})$ and  $R(\beta_1\ldots\beta_{n-1})$ coincide, the induction hypothesis implies that
$s_{\alpha_1\ldots\alpha_{n-1}} = s_{\beta_1\ldots\beta_{n-1}}$. Hence $s_{[\alpha]} = s_{[\beta]}$, as required.

For the rest of the proof, we assume that the cells $c_\alpha$ and $c_\beta$ are different and that the cell $c_\alpha$ is on the left.

Let $\alpha^\prime = \alpha_1\ldots\alpha_{n-1}$. Then we have
$R(\alpha^\prime) = c_\alpha^\nwarrow(R(\alpha))$. There remains two cases to
consider.

First, the cell $c_\beta$ can be a corner cell in shape$R(\alpha^\prime)$. In this case, the flight path of $c_\beta$ on the tower $T_x$ containing $c_\alpha$ does not pass from the cell $c_\alpha$ or the cell just above it. We illustrate this situation with the following picture.

\begin{center}
\begin{picture}(80,60)

\put(20,40){\tableau{{c_\alpha}\\{}\\{}\\{}\\{}}}
\put(50,0){\tableau{{c_\beta}}}
\multiput(0,50)(0,0){1}{\line(1,-1){25}}
\multiput(25,25)(0,0){1}{\line(0,1){10}}
\multiput(25,35)(0,0){1}{\line(1,-1){30}}

\multiput(5,65)(0,0){1}{\line(1,-1){20}}

\end{picture}\hskip.75in
\begin{picture}(80,60)

\put(20,30){\tableau{{c_\alpha}\\{}\\{}\\{}}}
\put(50,20){\tableau{{c_\beta}}}
\multiput(-5,65)(0,0){1}{\line(1,-1){30}}

\multiput(25,55)(0,0){1}{\line(1,-1){30}}

\end{picture}\hskip.15in
\end{center}

In particular, on the tower $T_x$, the distance between the flight paths of
the cells $c_\alpha$ and $c_\beta$ is at least 2. Therefore, in the first case,
by Lemma \ref{lem:ax-zigzag}, we get that $|\alpha_n-\beta_n|\ge 2$. The same
equality is obtained also in the second case by an easy modification of the proof
of Lemma \ref{lem:ax-zigzag}.

Now since $c_\beta$ is a corner cell in shape$(R(\alpha^\prime))$, there is a standard
tower tableau of this shape (with the cell $c_\beta$ labelled by $n-1$) with reading
word $\gamma$ such that $\gamma = \gamma_1\gamma_2\ldots\gamma_{n-2}\beta_n.$
Moreover, by the induction hypothesis, we have $$s_{[\gamma]} = s_{[\alpha^\prime]}.$$

On the other hand, $c_\alpha$ is a corner cell in shape$R(\beta^\prime)$ where
$\beta^\prime = \beta_1\beta_2\ldots\beta_{n-1}$. Hence there is a standard tower
tableau of this shape with reading word $\delta$ such that $\delta =
\delta_1\delta_2\ldots\delta_{n-2}\alpha_n.$
Again, by the induction hypothesis, we have $$s_{[\delta]} = s_{[\beta^\prime]}.$$
Now we have
\[
\mathrm{shape}R(\gamma_1\gamma_2\ldots\gamma_{n-2}) = c_\beta^\nwarrow
c_\alpha^\nwarrow \mathcal T = c_\alpha^\nwarrow c_\beta^\nwarrow \mathcal T =
\mathrm{shape}R(\delta_1\delta_2\ldots\delta_{n-2})
\]
and hence, by the induction hypothesis, we get the equality
$$s_{\gamma_1\gamma_2\ldots\gamma_{n-2}}=s_{\delta_1\delta_2\ldots\delta_{n-2}}.$$
Therefore using the above equalities, we obtain
\begin{align*}
s_{[\alpha]} &= s_{[\alpha^\prime]}s_{\alpha_n} = s_{[\gamma]}s_{\alpha_n}=
s_{[\gamma_1\gamma_2\ldots\gamma_{n-2}]}s_{\beta_n}s_{\alpha_n}\\
&= s_{[\delta_1\delta_2\ldots\delta_{n-2}]}s_{\alpha_n}s_{\beta_n} = s_{[\delta]}s_{\beta_n} = s_{[\beta^\prime]}s_{\beta_n}\\
&= s_{[\beta]}.
\end{align*}
Hence the case is closed.

The final case is the one where $c_\beta$ is not a corner cell in
shape$R(\alpha^\prime)$.
In this case, the flight path of
the cell $c_\beta$ should pass from the cell $c_\alpha$. Indeed, the flight path of $c_\beta$ cannot pass from a
cell below $c_\alpha$ since, then $c_\beta$ is a corner cell in shape$R(\alpha^\prime)$ as seen in the
previous case.
Also note that if the flight path of $c_\beta$ passes from the cell just above $c_\alpha$, then $c_\beta$ cannot
be a corner cell in $\mathcal T$. Any other cell above $c_\alpha$ is also not possible since in this case the
flight path of $c_\beta$ will not be effected by the removal of the cell $c_\alpha$. Thus
the situation, in shape$R(\alpha)$, is as follows.
\begin{center}
\begin{picture}(80,60)

\put(20,30){\tableau{{c_\alpha}\\{c_\gamma}\\{}\\{}}}
\put(50,0){\tableau{{c_\beta}}}
\multiput(0,50)(0,0){1}{\line(1,-1){25}}
\multiput(25,25)(0,0){1}{\line(0,1){10}}
\multiput(25,35)(0,0){1}{\line(1,-1){30}}

\multiput(0,60)(0,0){1}{\line(1,-1){20}}

\end{picture}\hskip.75in
\end{center}
Notice that in the above case, the cell $c_\gamma$ has a flight path, since the cell
$c_\beta$ is a corner cell and must contain the flight path of $c_\gamma$ in its
flight path. Thus by Lemma \ref{lem:ax-zigzag}, we get that
\[
|\mathrm{flight}\#(c_\alpha,\mathcal T) - \mathrm{flight}\#(c_\gamma, \mathcal T)| =1.
\]
Also since the flight numbers of $c_\gamma$ and $c_\beta$ are the same, we get
\[
|\mathrm{flight}\#(c_\alpha,\mathcal T) - \mathrm{flight}\#(c_\beta, \mathcal T)| =1.
\]
Therefore, although $c_\beta$ is not a corner cell in $R(\alpha^\prime)$,
the cell $c_\alpha$ is a corner in $R(\beta^\prime)$ and also $c_\gamma$ is a corner
in $c_\alpha^\nwarrow R(\beta^\prime)$. Thus we have the diagram
\[
c_\gamma^\nwarrow c_\alpha^\nwarrow c_\beta^\nwarrow \mathcal T.
\]
Furthermore, if the flight number of $c_\beta$ is $i$, then that of $c_\alpha$ and
$c_\gamma$ should
be $i+1$ and $i$, respectively. Hence if $u$ is a reduced word with recording tableau
of the above shape, we get that
\[
\mathrm{shape}(R(u(i)(i+1)(i))) = \mathrm{shape}(R(\beta)).
\]
Since the last term $\beta_n$ of $\beta$ is $i$, the word $u(i)(i+1)(i)$ is braid related to $\beta$ by the first case.

On the other hand, it is clear that $c_\gamma$ is a corner cell in $R(\alpha^\prime)$.
Moreover when $c_\gamma$ is removed, $c_\beta$ becomes a corner cell of the new
diagram $c_\gamma^\nwarrow c_\alpha^\nwarrow \mathcal T$ and the flight number of
$c_\beta$ in this diagram is the same as the flight number of $c_\alpha$ in
$\mathcal T$. Thus we have the diagram
\[
c_\beta^\nwarrow c_\gamma^\nwarrow c_\alpha^\nwarrow \mathcal T
\]
with the flight numbers of $c_\beta, c_\gamma$ and $c_\alpha$ given respectively by
$i+1, i$ and $i+1$. Moreover we have
\[
c_\gamma^\nwarrow c_\alpha^\nwarrow c_\beta^\nwarrow \mathcal T = c_\beta^\nwarrow
c_\gamma^\nwarrow c_\alpha^\nwarrow \mathcal T
\]
as tower diagrams. Thus we also have
\[
\mathrm{shape}(R(u(i+1)(i)(i+1))) = \mathrm{shape}(R(\alpha))
\]
and hence $u(i+1)(i)(i+1)$ is braid related to $\alpha$ by the first case. Now the
result follows since $u(i+1)(i)(i+1)$ and $u(i)(i+1)(i)$ are also braid related.
\end{proof}

We have thus proved that each tower diagram determines a unique permutation and vice versa a unique tower diagram is determined by a given permutation. The next result
shows that the words that we thus obtain are indeed reduced.
\begin{thm}\label{thm:SRW2RW}
The sliding and recording algorithm does not terminate on the word
$\alpha= \alpha_1\alpha_2\ldots\alpha_n$, that is, $\alpha\in
\mathrm{SRW}(\mathbb{Z}^+)$ if and only if
the word $s_{\alpha_1}s_{\alpha_2}\ldots s_{\alpha_n}$ is a reduced
expression for $s_{[\alpha]}$.
\end{thm}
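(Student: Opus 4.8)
The plan is to prove both implications simultaneously by strong induction on the length $n$ of $\alpha = \alpha_1\ldots\alpha_n$, reducing everything to a single combinatorial dichotomy about the final slide. Write $\alpha' = \alpha_1\ldots\alpha_{n-1}$, and whenever $\alpha'\in\mathrm{SRW}(\mathbb Z^+)$ set $w = s_{[\alpha']}$ and $\mathcal T = \mathrm{shape}(R(\alpha'))$. The base cases ($n\le 1$) are immediate. The heart of the argument is the claim that, \emph{assuming $\alpha'$ is a reduced word in $\mathrm{SRW}(\mathbb Z^+)$}, the slide $\alpha_n^{\searrow}\mathcal T$ terminates if and only if $s_{\alpha_n}$ is a right descent of $w$, that is $\ell(w s_{\alpha_n}) = \ell(w)-1$. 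Granting this, the induction closes at once. For the forward implication, a prefix of an $\mathrm{SRW}$ word is again in $\mathrm{SRW}$ (no slide in processing $\alpha$ terminates, so none in processing $\alpha'$ does), hence reduced by induction; were $\alpha$ not reduced, $s_{\alpha_n}$ would be a descent of $w$, forcing the last slide to terminate, contradicting $\alpha\in\mathrm{SRW}(\mathbb Z^+)$. For the reverse implication, a prefix of a reduced word is reduced, hence in $\mathrm{SRW}(\mathbb Z^+)$ by induction; since $\alpha$ reduced makes $s_{\alpha_n}$ an ascent of $w$, the final slide cannot terminate, so $\alpha\in\mathrm{SRW}(\mathbb Z^+)$.

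To establish the dichotomy I first record the natural companion to Lemma~\ref{lem:flightVSslide}: for any tower diagram and positive integer $a$, the slide $a^{\searrow}\mathcal T$ terminates precisely when $\mathcal T$ already contains a corner cell with flight number $a$. When the slide terminates it does so through \textbf{(S1)}(b) or \textbf{(S2)}(b) at a top cell whose slide path is exactly a flight path traversed in reverse, so that cell is a corner cell with flight number $a$. Conversely, if $c$ is such a corner cell, then Lemma~\ref{lem:flightVSslide}(b) gives $a^{\searrow}(c^{\nwarrow}\mathcal T) = \mathcal T$, and running the same slide inside $\mathcal T$ retraces this path until it reaches $c$, which is present while the cell above it is absent, so the slide terminates.

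Now suppose $\alpha'$ is reduced and in $\mathrm{SRW}(\mathbb Z^+)$. If $s_{\alpha_n}$ is a descent of $w$, there is a reduced word for $w$ ending in $\alpha_n$; by the inductive hypothesis it lies in $\mathrm{SRW}(\mathbb Z^+)$, and by Theorem~\ref{thm:braid2shape} its recording tableau has shape $\mathcal T$. Its last cell is, by Lemma~\ref{lem:flightVSslide}(a), a corner cell of $\mathcal T$ with flight number $\alpha_n$, so by the companion statement $\alpha_n^{\searrow}\mathcal T$ terminates. Conversely, if $\alpha_n^{\searrow}\mathcal T$ terminates, the companion statement produces a corner cell $c$ of $\mathcal T$ with flight number $\alpha_n$; filling $c^{\nwarrow}\mathcal T$ with any standard tower tableau (every tower diagram admits one, since the top cell of its leftmost nonempty tower always satisfies \textbf{(F1)} and is thus a corner) yields, via Theorem~\ref{thm:srw-stt}, a word $\sigma\in\mathrm{SRW}(\mathbb Z^+)$ of shape $c^{\nwarrow}\mathcal T$, reduced of length $n-2$ by induction. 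Then $\sigma\alpha_n$ slides to shape $\mathcal T$ by Lemma~\ref{lem:flightVSslide}(b), so $\sigma\alpha_n\in\mathrm{SRW}(\mathbb Z^+)$, and Theorem~\ref{thm:shape2braid} gives $s_{[\sigma]}s_{\alpha_n} = w$; since $\ell(s_{[\sigma]}) = n-2 = \ell(w)-1$, this exhibits $s_{\alpha_n}$ as a descent of $w$.

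The main obstacle is the companion statement to Lemma~\ref{lem:flightVSslide}, namely identifying the terminal configuration of a terminating slide with a pre-existing corner cell of the matching flight number. This is where one must argue carefully from the case analysis \textbf{(S1)}--\textbf{(S2)} that the slide path of a terminating slide coincides with a flight path traversed in reverse, exactly parallel to the proof of Lemma~\ref{lem:flightVSslide}. Once this duality between terminating slides and corner cells is in hand, the ascent/descent dichotomy together with the two structure theorems does all the remaining work.
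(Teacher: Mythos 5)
Your proposal is correct and follows essentially the same route as the paper's own proof: induction on word length, the duality between terminating slides and pre-existing corner cells of matching flight number, and Theorems \ref{thm:braid2shape} and \ref{thm:shape2braid} to pass between words ending in $\alpha_n$ and shapes (in particular, your construction of the auxiliary word $\sigma\alpha_n$ realizing the corner cell is exactly the paper's construction of the word $\alpha'_1\ldots\alpha'_k$ ending in the repeated letter). The only difference is organizational: you isolate the termination/corner-cell companion lemma and a descent dichotomy explicitly, and reduce everything to the last letter via ``a prefix of a reduced word is reduced,'' whereas the paper argues at the first failure point and leaves these facts implicit.
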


\begin{proof} We will proceed by induction on the size of the words.
For  $n=1$ both sides of the statement follows directly. So suppose
that both sides of  the statement are true for all words of size
$n-1$ and let $\alpha= \alpha_1\alpha_2\ldots\alpha_n $ be a word on
$\mathbb{Z}^+$.

We first assume that $\alpha \in \mathrm{SRW}(\mathbb{Z}^+)$.
Equivalently  the SR algorithm does not terminate on $\alpha$ and
therefore  it does not terminate the subword
$\alpha_1\alpha_2\ldots\alpha_{n-1}$, and
 by induction argument $s_{\alpha_1}s_{\alpha_2}\ldots
 s_{\alpha_{n-1}}$ is a reduced expression for $s_{[\alpha]} \cdot
 s_{\alpha_n}$. Therefore $l(s_{[\alpha]} \cdot s_{\alpha_n})=n-1$ and $l(s_{[\alpha]})$ is either $n$ or $n-2$.

Now if
 $s_{\alpha_1}s_{\alpha_2}\ldots
 s_{\alpha_{n}}$ is not a reduced expression then $l(s_{[\alpha]})<n$ and by previous argument it must be equal to $n-2$. So
we have $l(s_{[\alpha]})<l(s_{[\alpha]} \cdot s_{\alpha_n})$ and hence
 $s_{[\alpha]}\cdot s_{\alpha_n}$  has another
 reduced expression say $s_{\alpha'_1}s_{\alpha'_2}\ldots
 s_{\alpha'_{n-1} }$  Coxeter related to $s_{\alpha_1}s_{\alpha_2}\ldots
 s_{\alpha_{n-1}}$ where $s_{\alpha'_{n-1}}=s_{\alpha_{n}}$.
 Now by
the previous theorem, we have
 $$S(\alpha_1 \alpha_2 \ldots
 \alpha_{n-2} \alpha_{n-1})=S=S(\alpha'_1 \alpha'_2 \ldots \alpha'_{n-2}
 \alpha_{n} )
 $$

Let $(i,j)$ be the cell in $S$ which is obtained by sliding $\alpha_n$ in
to the diagram $S(\alpha'_1 \alpha'_2 \ldots \alpha'_{n-2})$. Then $(i,j)$ must
be a corner cell in $S$. On the other hand $ S(\alpha_1 \alpha_2 \ldots
 \alpha_{n-2} \alpha_{n-1}{\alpha_n})$ is obtained by sliding another  $\alpha_n$ in to $S$ but then this sliding
 must go through the corner cell $(i,j)$ of $S$. But since the top of $(i,j)$ is empty in  $S$ this shows that the SR algorithm
 terminates for $\alpha_1 \alpha_2 \ldots \alpha_{n-1}\alpha_n$ which is a contradiction.

We now assume that $\alpha= \alpha_1\alpha_2\ldots\alpha_n  \not \in
\mathrm{SRW}(\mathbb{Z}^+)$ and let $k>1$ be the integer such that
SR algorithm does not terminate on $ \alpha_1\alpha_2\ldots\alpha_k
$  but it terminates on    $
\alpha_1\alpha_2\ldots\alpha_k\alpha_{k+1}   $.  Therefore in SR
algorithm    $\alpha_{k+1}$ is slided through some corner cell $(i,j)$
of $S(\alpha_1\alpha_2\ldots\alpha_k)$. Therefore one can obtain
another word $\alpha'_1\alpha'_2\ldots\alpha'_k$, on which SR
algorithm produces the same tableau
$S(\alpha_1\alpha_2\ldots\alpha_k)$ by producing  the  cell
$(i,j)$ at the end and  therefore $\alpha'_k=\alpha_{k+1}$.  On the
other hand  by  the induction argument  $s_{\alpha_1} s_{\alpha_2}\ldots
s_{\alpha_k}$ and $s_{\alpha'_1} s_{\alpha'_2}\ldots s_{\alpha'_k}$
are Coxeter related and so are $s_{\alpha_1} s_{\alpha_2}\ldots
s_{\alpha_k}s_{\alpha_{k+1}}\ldots s_{\alpha_n}$ and $s_{\alpha'_1}
s_{\alpha'_2}\ldots s_{\alpha'_k}s_{\alpha_ {k+1}}\ldots
s_{\alpha_n}$. Now the length of $s_{\alpha'_1} s_{\alpha'_2}\ldots
s_{\alpha'_k} s_{\alpha_{k+1}}\ldots s_{\alpha_n}$ is clearly less
than or equal to $n-2$ since $ s_{\alpha'_k} =s_{\alpha_{k+1}}$.
Hence  the length of
$\alpha_1\alpha_2\ldots\alpha_k\alpha_{k+1}\ldots\alpha_n$ is less
than or equal to $n-2$ and it  is not reduced.
\end{proof}

Throughout the paper, we have introduced several functions. Together with the known
functions and relations, we obtain the diagram in the introduction. We repeat the
diagram by using our notations.

\begin{center}
\begin{picture}(160,120)

\put(-30,90){$\mathrm{TD}$}
\put(140,90){$\varinjlim_n S_n$}
\put(-70,50){$\mathrm{STT}$}
\put(-10,50){$\mathrm{SRW}(\mathbb Z^+)$}
\put(80,50){$\mathrm{W}(\mathbb Z^+)$}
\put(140,0){$\mathrm{RW}(\mathbb Z^+)$}

\put(112,80){{$s_{[?]}$}}
\multiput(115,65)(0,0){1}{\vector(1,1){22}}

\multiput(0,95)(5,0){25}{\line(1,0){3}}
\multiput(0,95)(5,0){1}{\vector(-1,0){3}}
\multiput(120,95)(5,0){1}{\vector(1,0){3}}

\put(160,50){\small{Red}}
\multiput(155,80)(0,0){1}{\vector(0,-1){55}}

\multiput(135,15)(0,0){1}{\vector(-1,1){22}}

\multiput(135,10)(0,0){1}{\line(-3,1){95}}
\multiput(135,8)(0,0){1}{\line(-3,1){96}}


\put(-75,80){\small{shape}}
\multiput(-55,65)(0,0){1}{\vector(1,1){22}}

\put(-35,58){\small{Read}}
\multiput(-40,55)(0,0){1}{\vector(1,0){25}}
\put(-30,40){\small{SR}}
\multiput(-15,50)(0,0){1}{\vector(-1,0){25}}
\multiput(45,55)(0,0){1}{\vector(1,0){25}}
\end{picture}
\end{center}

Although the diagram is self-expository, there are
several other results that we can draw from the diagram.

Let $\omega\in S_n$ be a permutation and let $\alpha$ be a reduced word representing
$\omega$. Also let $\mathcal T_\omega$ be the shape of the standard tower tableau
$R(\alpha)$. We call $\mathcal T_\omega$ the \textbf{\textit{shape}} of $\omega$.

We denote by Red$(\omega)$ the set of all reduced words representing $\omega$ and
by STT$(\omega)$ the set of all standard tower tableau of shape $\omega$.

With these notations, by Theorem \ref{thm:srw-stt}, the reading function Read$:\mathrm{STT}\rightarrow W(\mathbb Z^+)$ restricts to a bijection
\[
\mathrm{Read}: \mathrm{STT}\rightarrow \mathrm{RW}.
\]
Moreover, by Theorem \ref{thm:braid2shape} and Theorem \ref{thm:shape2braid}, the
bijection further specialize to a bijection
\[
\mathrm{Read}: \mathrm{STT}(\omega)\rightarrow \mathrm{Red}(\omega)
\]
as promised at the beginning of this section. Note that, in both cases the inverse
is given by the inverse of the reading function, that is, by the SR algorithm.

We state this result as a theorem, for future reference.
\begin{thm}\label{thm:red2stt}
Let $\omega$ be a permutation. Then the reading function and the SR algorithm are
inverse bijective correspondences between
\begin{enumerate}
\item the set $\mathrm{Red}(\omega)$ of reduced expressions for $\omega$ and
\item the set $\mathrm{STT}(\omega)$ of standard tower tableaux of shape
$\omega$.
\end{enumerate}
\end{thm}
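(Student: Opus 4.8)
The plan is to deduce the theorem from the four results already established, since the genuine combinatorial work has been carried out in Theorems \ref{thm:srw-stt}, \ref{thm:braid2shape}, \ref{thm:shape2braid} and \ref{thm:SRW2RW}; what remains is to verify that the global bijection $\mathrm{Read}\colon \mathrm{STT}\to\mathrm{SRW}(\mathbb Z^+)$ restricts correctly to the fibers indexed by $\omega$ on the word side and by the shape $\mathcal T_\omega$ on the tableau side.

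First I would confirm that $\mathcal T_\omega$ is well defined. If $\alpha$ and $\beta$ are any two reduced words representing $\omega$, then $s_{[\alpha]}=s_{[\beta]}=\omega$, so Theorem \ref{thm:braid2shape} gives $\mathrm{shape}(R(\alpha))=\mathrm{shape}(R(\beta))$; hence the shape does not depend on the chosen reduced word, and $\mathcal T_\omega$ is unambiguous. Next, by Theorem \ref{thm:SRW2RW} a word lies in $\mathrm{SRW}(\mathbb Z^+)$ precisely when it is reduced, so $\mathrm{Red}(\omega)$ is exactly the set of $\alpha\in\mathrm{SRW}(\mathbb Z^+)$ with $s_{[\alpha]}=\omega$, while by definition $\mathrm{STT}(\omega)$ is the set of $T\in\mathrm{STT}$ with $\mathrm{shape}(T)=\mathcal T_\omega$.

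The core step is to check that the mutually inverse maps $\mathrm{Read}$ and $R$ of Theorem \ref{thm:srw-stt} carry these two subsets onto one another. Given $\alpha\in\mathrm{Red}(\omega)$, the definition of $\mathcal T_\omega$ already yields $\mathrm{shape}(R(\alpha))=\mathcal T_\omega$, so $R(\alpha)\in\mathrm{STT}(\omega)$. Conversely, given $T\in\mathrm{STT}(\omega)$ set $\alpha=\mathrm{Read}(T)$; since $R(\alpha)=T$ by Theorem \ref{thm:srw-stt}, we have $\mathrm{shape}(R(\alpha))=\mathcal T_\omega=\mathrm{shape}(R(\beta))$ for any fixed reduced word $\beta$ representing $\omega$, and Theorem \ref{thm:shape2braid} then forces $s_{[\alpha]}=s_{[\beta]}=\omega$, so $\alpha\in\mathrm{Red}(\omega)$. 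Because $\mathrm{Read}$ and $R$ are already known to be mutually inverse on the ambient sets, their restrictions to $\mathrm{STT}(\omega)$ and $\mathrm{Red}(\omega)$ remain mutually inverse, and hence bijective.

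I do not expect a serious obstacle here: the only points requiring care are the well-definedness of $\mathcal T_\omega$ (handled by Theorem \ref{thm:braid2shape}) and the verification that membership in a shape-class on the tableau side matches membership in a permutation-class on the word side, the two implications being exactly Theorems \ref{thm:braid2shape} and \ref{thm:shape2braid}. In effect, the theorem asserts that the bijection of Theorem \ref{thm:srw-stt} is compatible with the two natural partitions, and this compatibility is precisely what that converse pair of theorems encodes.
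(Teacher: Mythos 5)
Your proposal is correct and follows essentially the same route as the paper: the paper also derives this theorem by combining the ambient bijection of Theorem \ref{thm:srw-stt} with Theorem \ref{thm:SRW2RW} (identifying $\mathrm{SRW}(\mathbb Z^+)$ with reduced words) and the pair Theorem \ref{thm:braid2shape}/Theorem \ref{thm:shape2braid} (showing the bijection respects the partitions by permutation and by shape). Your write-up simply makes explicit the well-definedness of $\mathcal T_\omega$ and the two fiber inclusions that the paper states more tersely in the prose preceding the theorem.
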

As a corollary, we get the dotted connection shown in the above diagram.
\begin{cor}\label{cor:perm2td}
There is a bijective correspondence between
\begin{enumerate}
\item the set $\varinjlim_n S_n$ of all finite permutations and
\item the set $\mathrm{TD}$ of all finite tower diagrams
\end{enumerate}
given by $\omega\rightarrow \mathrm{shape}(\omega)$.
\end{cor}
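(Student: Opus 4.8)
The plan is to establish that $\omega\mapsto\mathrm{shape}(\omega)$ is well-defined, injective and surjective, drawing on the chain of bijections already assembled. First I would settle well-definedness, since the definition of $\mathcal T_\omega$ refers to \emph{a} reduced word. If $\alpha$ and $\beta$ are two reduced words for $\omega$, then $s_{[\alpha]}=\omega=s_{[\beta]}$, and by Theorem \ref{thm:SRW2RW} both words lie in $\mathrm{SRW}(\mathbb Z^+)$; Theorem \ref{thm:braid2shape} then gives $\mathrm{shape}(R(\alpha))=\mathrm{shape}(R(\beta))$, so $\mathcal T_\omega$ is independent of the chosen reduced word. Note also that the size of $\mathrm{shape}(\omega)$ equals the number of cells of $R(\alpha)$, which is the length of $\alpha$, i.e.\ $l(\omega)$.

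For injectivity, suppose $\mathrm{shape}(\omega_1)=\mathrm{shape}(\omega_2)=:\mathcal T$, say of size $n$. By the size observation above, $l(\omega_1)=l(\omega_2)=n$, so I may choose reduced words $\alpha$ for $\omega_1$ and $\beta$ for $\omega_2$, both of length $n$ and both in $\mathrm{SRW}(\mathbb Z^+)$ by Theorem \ref{thm:SRW2RW}, with $\mathrm{shape}(R(\alpha))=\mathcal T=\mathrm{shape}(R(\beta))$. Theorem \ref{thm:shape2braid} then yields $s_{[\alpha]}=s_{[\beta]}$, that is $\omega_1=\omega_2$. This is pure bookkeeping with the cited theorems once the equality of lengths is noted.

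Surjectivity is the one step requiring a genuine construction, and I expect it to be the main obstacle: I must guarantee that every abstract tower diagram $\mathcal T$ actually admits a standard labelling, for only then is it a shape of some permutation. I would prove by induction on the size $n$ that every nonempty tower diagram carries at least one standard tower tableau. The key observation is that the top cell of the leftmost nonzero tower is always a corner cell: since no tower lies to its left, it vacuously satisfies \textbf{(F1)} of Definition \ref{def:flight}, so it has a (trivial) flight path, and nothing lies above it. Labelling this corner cell by $n$ and removing it produces a tower diagram of size $n-1$, to which the induction hypothesis applies. Given such an $R\in\mathrm{STT}(\mathcal T)$, I set $\alpha=\mathrm{Read}(R)$; by Theorem \ref{thm:srw-stt} we have $\alpha\in\mathrm{SRW}(\mathbb Z^+)$ and $R(\alpha)=R$, and by Theorem \ref{thm:SRW2RW} the word $\alpha$ is reduced for $\omega:=s_{[\alpha]}$, whence $\mathrm{shape}(\omega)=\mathrm{shape}(R(\alpha))=\mathcal T$.

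Finally I would record that the inverse map sends $\mathcal T$ to $s_{[\mathrm{Read}(R)]}$ for any $R\in\mathrm{STT}(\mathcal T)$, this being well-defined exactly by Theorem \ref{thm:shape2braid} and already implicit in the fibrewise bijection of Theorem \ref{thm:red2stt} between $\mathrm{STT}(\omega)$ and $\mathrm{Red}(\omega)$. Thus the two maps are mutually inverse, giving the desired bijection between $\varinjlim_n S_n$ and $\mathrm{TD}$. All ingredients beyond the leftmost-tower existence argument are formal consequences of the previously established correspondences.
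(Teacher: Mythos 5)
Your proposal is correct and follows the same route the paper intends: the corollary is stated there without a separate proof, as the immediate combination of Theorem \ref{thm:braid2shape} (well-definedness of $\omega\mapsto\mathcal T_\omega$), Theorem \ref{thm:shape2braid} (injectivity), and the dictionary between reduced words and standard tower tableaux given by Theorems \ref{thm:srw-stt}, \ref{thm:SRW2RW} and \ref{thm:red2stt}. The one point where you add genuine content is surjectivity. The paper treats this as obvious (in the introduction it is called a trivial observation that any tower diagram corresponds to a permutation), and an explicit witness only appears later, in Section \ref{Section:NaturalWord}, where the natural labelling of an arbitrary tower diagram is shown to be standard; the recursive decomposition of $\mathrm{STT}(\mathcal T)$ over corner cells, which the paper records right after the corollary, presupposes rather than proves that $\mathrm{STT}(\mathcal T)$ is nonempty. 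Your inductive argument---the top cell of the leftmost nonzero tower vacuously satisfies \textbf{(F1)}, hence has a flight path and is a corner cell, so it can be labelled $n$, removed, and the induction hypothesis applied---is sound and closes exactly this gap: without some such construction one only knows that shapes of recording tableaux are hit, not that every abstract tower diagram arises as one. So your write-up is, if anything, more complete than the paper's own treatment, while remaining the same argument in substance.
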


To use the above bijection as a tool to describe reduced words for a given permutation
$\omega$, one needs to know all standard tower tableaux of the determined shape.
This can be done recursively, as follows.

Let $\mathcal T= \mathcal T_\omega$ be a tower diagram. We denote the set of all
corner cells of $\mathcal T$ by $C(\mathcal T)$. By the definition of a standard tower
tableaux and the reading function, any cell in $C(\mathcal T)$ corresponds to a
terminal term in some reduced word in Red$(\omega)$. Therefore we have
$$
\mathrm{STT}(\mathcal T) = \bigsqcup_{c\in C(\mathcal T), R\in \mathrm{STT}
(\mathcal T - c)} \{ R\cup \{[c,n]\} \}.
$$
It is clear that the above equality produces all standard tower
tableaux of the shape $\mathcal T$ and hence, applying the reading function, all reduced words for the permutation $\omega$.

Notice that the above remark suggests to determine all standard tower tableaux by
removing one corner cell at a time. As it is, although the algorithm is very
systematic, it is also slow. However the advantage of our algorithm is that we
determine the shape of the diagram at the beginning. In a sequel to this paper, we
will introduce a faster algorithm to determine the set of all reduced expressions
of a given permutation.
\section{Natural Word of a Permutation}
\label{Section:NaturalWord}
In this section, we introduce a canonical reduced word for any permutation.
This word has several nice properties and will be used in Section \ref{section:tower2rothe} in a crucial way.

We are still denoting by $\mathcal T$ an arbitrary tower diagram. The \textbf{\textit{ natural labelling}} $\mathbb T$ of $\mathcal T$ is the labelling defined inductively
as follows. If $\mathcal T$ contains a unique cell, then the unique labelling of
$\mathcal T$ is natural. Otherwise, if $\mathcal T$ contains $n$ cells, we remove
the cell on the top of the left most tower of $\mathcal T$ and determine the natural
labelling of the new diagram. Then we add the removed cell into its original position
with the label $n$.

In other words, we label $\mathcal T$ by $1,2,\ldots, n$ in the increasing manner
starting from the bottom cell of the right most tower and then by going first from
bottom to top and then right to left. It is clear
that the natural labelling of a tower diagram is standard and hence the word
Read$(\mathbb T)$ is defined. We call Read$(\mathbb T)$ the \textbf{\textit{natural
word}} for the diagram $\mathcal T$. If $\mathcal T= \mathcal T_\omega$, then we write
$$\eta_\mathcal T:=\eta_\omega:= \mathrm{Read}(\mathbb T)$$ and say that $\eta_\omega$ is the natural
word for $\omega$.

The natural labelling of the tower diagram $\mathcal T = (2,1,0,1)$ is the tableau
$$\line(0,1){30}\line(1,0){50}
\multiput(-50,0)(10,0){5}{\line(0,1){2}}
\multiput(-50,0)(0,10){3}{\line(1,0){2}}
\put(-50,20){\tableau{\\{4}\\{3}}} \put(-40,10){\tableau{\\{2}}}
\put(-20,10){\tableau{\\{1}}}
$$
and the corresponding natural word is $\eta_{(2,1,0,1)} = 4\, 2\, 1\, 2.$

As an other example, consider the longest word $\omega_0$ in $S_4$. The commonly used
reduced expression for $\omega_0$ is
$3\, 2\, 1\, 3\, 2\, 3$ and it is easy to see that the associated tower diagram and
its natural labelling is given by
$$\line(0,1){50}\line(1,0){50}
\multiput(-50,0)(10,0){5}{\line(0,1){2}}
\multiput(-50,0)(0,10){3}{\line(1,0){2}}
\put(-50,30){\tableau{\\{6}\\{5}\\{4}}} \put(-40,20){\tableau{\\{3}\\{2}}}
\put(-30,10){\tableau{\\{1}}}
$$
Therefore the natural word for the longest permutation is $3\, 2\, 3\, 1\, 2\, 3.$
The natural word $\eta_\omega$ can be characterized by certain properties given below.
Write the tower $\mathcal T$ as a concatenation
$$\mathcal T = \bigsqcup_{i=1}^k(\mathcal T_i)$$  of its towers.
Then the natural word $\eta_\omega$ decomposes as
$$\eta_\omega = \bigsqcup_{i=1}^{k}\eta_i$$ where $\eta_i$ is the natural word of the
tower $\mathcal T_i$ and we agree that when $\mathcal T_i$ is empty, the corresponding
natural word is also empty.

Now if $|\mathcal T_i| = k_i>0$, then we have
\[
\eta_i = i\, (i+1)\, \ldots\, (i+k_i).
\]
In particular, for each $i$, the word $\eta_i$ is an increasing sequence of
consecutive integers. On the other hand, if we write $\eta_i =
\eta_i^1\eta_i^2\ldots\eta_i^{k_i}$ then the sequence $\eta_1^1\eta_2^1\ldots\eta_s^1$
is strictly decreasing.

It is also clear that the properties characterize the natural word in the following
sense. When a tower diagram is given, then the above constructed word is the unique
reading word associated to the diagram with the specified properties. Therefore
we have proved the following result.
\begin{pro}
Let $\omega\in S_n$ be a permutation. Then there is a unique reduced expression $\eta$
representing $\omega$ such that the word $\eta$ is a sequence
$\eta_1\eta_2\ldots\eta_s$ of increasing subsequences where
\begin{enumerate}
\item each subsequence $\eta_i$ is a sequence of consecutive integers and
\item the sequence of initial terms of $\eta_i$'s is strictly decreasing.
\end{enumerate}
\end{pro}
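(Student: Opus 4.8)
The plan is to read off both halves of the statement from the correspondence between permutations, tower diagrams and standard tower tableaux established above, supplemented by a direct run through the sliding algorithm on a word of the prescribed form. Existence costs almost nothing: by Corollary~\ref{cor:perm2td} the permutation $\omega$ has a well-defined tower diagram $\mathcal T_\omega$, its natural labelling $\mathbb T$ is standard, and hence $\eta_\omega=\mathrm{Read}(\mathbb T)$ is, by Theorem~\ref{thm:red2stt}, a reduced expression for $\omega$. The decomposition $\eta_\omega=\bigsqcup_i\eta_i$ discussed just before the statement already shows that $\eta_\omega$ meets both requirements: each $\eta_i$ is the increasing run of consecutive integers $i\,(i{+}1)\cdots$ produced by a single tower, and, because the towers are read from right to left, the sequence of initial terms (the tower positions) strictly decreases.

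The content is in uniqueness, and here my plan is to feed an arbitrary competitor into the SR algorithm and watch it rebuild the natural tableau. Suppose $\eta=\eta_1\eta_2\cdots\eta_s$ is reduced, represents $\omega$, and satisfies (1) and (2); write $\eta_j=a_j\,(a_j{+}1)\cdots b_j$ with $a_1>a_2>\cdots>a_s$. Since $\eta$ is reduced it lies in $\mathrm{SRW}(\mathbb Z^+)$ by Theorem~\ref{thm:SRW2RW}, so $\eta=\mathrm{Read}(R(\eta))$ with $R(\eta)$ a standard tower tableau of shape $\mathcal T_\omega$. It therefore suffices to prove $R(\eta)=\mathbb T$. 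I would establish, by induction on $j$, that after processing $\eta_1\cdots\eta_j$ the diagram consists of full towers at the positions $a_1>\cdots>a_j$, the tower over $a_i$ having height $b_i-a_i+1$, labelled consecutively from the bottom of the rightmost column upward — that is, an initial segment of the natural labelling.

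The heart of the argument, and the step I expect to be the main obstacle, is verifying that sliding the run $\eta_j$ into the diagram built so far yields precisely the full tower over $a_j$, with no interference from the towers already present. The decisive observation is property (2): since $a_j$ is smaller than every earlier initial term, the new tower sits strictly to the \emph{left} of all previously built ones. Consequently, when $a_j$ is slid in, no cell lies on the diagonals $x+y=a_j-1$ or $x+y=a_j$, so case \textbf{(S1)}(a) of Definition~\ref{def:sliding} creates the base cell $(a_j,0)$; and when a subsequent letter $a_j+t$ is slid in, the leftmost tower meeting the active diagonal $x+y=a_j+t-1$ is always the current column $a_j$ itself, because the columns at $a_1,\dots,a_{j-1}$ lie to its right and are invisible to the left-to-right search of \textbf{(S2)}. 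Hence \textbf{(S2)}(a) simply stacks $(a_j,t)$ on top. Every slide thus falls into a non-terminating case, the run builds the column $a_j$ monotonically, and no cell is ever added to a completed column. Pushing this bookkeeping cleanly through Definition~\ref{def:sliding}, while keeping the ``first tower from the left'' clause under control, is the one genuinely fiddly point; everything else is formal.

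Once the inductive claim is proved, $R(\eta)$ is forced to coincide with the natural labelling $\mathbb T$ of $\mathcal T_\omega$, so $\eta=\mathrm{Read}(\mathbb T)=\eta_\omega$, which is uniqueness. As a by-product the same computation shows that \emph{any} word satisfying (1) and (2) is automatically non-terminating, hence reduced, and that its shape records exactly the tower positions $a_j$ together with the heights $b_j-a_j+1$; combined with the bijection $\omega\leftrightarrow\mathcal T_\omega$ of Corollary~\ref{cor:perm2td}, this upgrades the statement to a bijection between words of the prescribed form and permutations, of which the proposition is the fibrewise version.
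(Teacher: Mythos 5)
Your proof is correct, and its existence half is exactly the paper's: construct the natural labelling $\mathbb T$ of $\mathcal T_\omega$, note it is standard, and read off the decomposition into tower words to verify properties (1) and (2). Where you genuinely diverge is in the uniqueness half. The paper stays inside the fixed diagram $\mathcal T_\omega$: having the bijection $\mathrm{Red}(\omega)\leftrightarrow\mathrm{STT}(\mathcal T_\omega)$ of Theorem \ref{thm:red2stt}, it simply asserts that ``the above constructed word is the unique reading word associated to the diagram with the specified properties,'' i.e.\ that among all standard labellings of the given shape only the natural one can produce a reading word of the prescribed form, and offers no further argument. You instead take an arbitrary competitor word of the prescribed form and run the SR algorithm of Definition \ref{def:sliding} on it, showing by induction on the runs that property (2) forces each run to build a fresh full column strictly to the left of everything already present (the first letter of a run falls under \textbf{(S1)}(a), the remaining letters under \textbf{(S2)}(a) with the active column always the leftmost one), so that the recording tableau is forced to be the natural labelling of its own shape; Theorems \ref{thm:SRW2RW} and \ref{thm:braid2shape} then pin that shape to $\mathcal T_\omega$ and uniqueness follows. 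Your route costs a page of sliding bookkeeping but actually proves the step the paper waves through, and it yields two dividends the paper does not state: any word satisfying (1) and (2) is automatically non-terminating, hence reduced, and the correspondence between such words and permutations is seen directly to be a bijection, with the shape read off from the runs.
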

\begin{rem}
There is another canonical choice for a standard labelling of a tower diagram where
we label the cells first from right to left and then bottom to top. For example, such
labelling of the tower diagram of the longest word in $S_4$ is given as follows.
$$\line(0,1){50}\line(1,0){50}
\multiput(-50,0)(10,0){5}{\line(0,1){2}}
\multiput(-50,0)(0,10){3}{\line(1,0){2}}
\put(-50,30){\tableau{\\{6}\\{5}\\{3}}} \put(-40,20){\tableau{\\{4}\\{2}}}
\put(-30,10){\tableau{\\{1}}}
$$
Notice that the above labelling is also standard, by definition. In the above example,
the reading word of the tower tableau is
$3\, 2\, 1\, 3\, 2\, 3.$
Note that this expression is the same as the commonly used one.
\end{rem}
\section{From Rothe diagrams to tower diagrams}
\label{section:rothe2tower}
In this section, we show how the tower diagram can be obtained
from the corresponding Rothe diagram. The idea is that when the Rothe diagram of
a permutation is given, the corresponding tower diagram can be
determined by pushing all the nodes of the Rothe diagram to the top
row and then reflecting the resulting diagram on the top border of
the Rothe diagram. In other words, to obtain the tower diagram, we
forget the gaps between the cells in the columns of the Rothe
diagram. However, we should note that, by Corollary \ref{cor:perm2td}, forgetting
gaps is not very crucial in determining the permutation corresponding to the
Rothe diagram. In the next section, we further prove
that the tower diagram determines the Rothe diagram.

To state the main result of this section, we first recall from \cite{M} some
basic facts concerning Rothe diagrams. We begin by the definition.
Let $\omega$ be a permutation in $S_n$. The \textbf{\emph{Rothe diagram}}
$D_\omega$ of $\omega$ is the set
\[
D_\omega = \{ (i,j)| 1\le i,j\le n, \omega(i)< j, \omega^{-1}(j)<i \}.
\]
We sketch the set $D_\omega$ as an $n\times n$-array with the
points in $D_\omega$ marked with $\circ$. Another way to
determine the Rothe diagram $D_\omega$ can be described as follows. Let $D$ be an
 $n\times n$-array with empty cells.
Mark the cell $(i,j)$ with a cross if $\omega(i) = j$ and leave
it empty otherwise. Then for each crossed cell $(i,j)$, mark the cells
of the hook with vertex $(i,j)$ by a dot. Now the pairs in $D_\omega$ are
those which are left empty at the end of this process.

\begin{ex}
Let $\omega = 214635$. Then the array $D$ with marks is the following
and the Rothe diagram $D_\omega$ of $\omega$ are given by the following diagrams.

\begin{align*}
\begin{Young}
&$\times$ & $\cdot$ &$\cdot$ &$\cdot$ &$\cdot$\cr
$\times$ & $\cdot$ &$\cdot$ &$\cdot$ &$\cdot$ &$\cdot$\cr
$\cdot$ &$\cdot$& & $\times$&$\cdot$ &$\cdot$\cr
$\cdot$ &$\cdot$& &$\cdot$& &$\times$\cr
$\cdot$ &$\cdot$&$\times$ &$\cdot$&$\cdot$ &$\cdot$\cr
$\cdot$ &$\cdot$&$\cdot$ &$\cdot$&$\times$ &$\cdot$\cr
\end{Young}&&&
\begin{Young}
$\circ$&&&&&\cr
&&&&&\cr
&&$\circ$&&&\cr
&&$\circ$&&$\circ$&\cr
&&&&&\cr
&&&&&\cr
\end{Young}
\end{align*}
\end{ex}

Now let $s_i = (i,i+1)$ be a standard transposition. Next we
determine the effect of multiplication of $\omega$ by $s_i$ on the
Rothe diagram. Part of this result is stated in \cite[Lemma 4.6]{FGRS}. We provide
a complete proof.

Let $\omega$ and $s_i$ be as above and let $\tilde\omega= \omega
s_i$. Let $D_\omega(\times)$ be the $n\times n$- array marked with
$\times$ as described in the previous section. Then $D_{\tilde\omega}(\times)$ is obtained by
applying $s_i$ to the rows of $D_\omega(\times)$, by definition.
Recall that the length $l=l(\omega)$ of $\omega$ is equal to the
size $|D_\omega|$ of its Rothe diagram and also that
$l(\tilde\omega) = l(\omega) \pm 1$. Thus $|D_{\tilde\omega}| = |D_\omega| \pm 1$.

It is possible to determine $D_{\tilde\omega}$ more explicitly. There are two cases to consider. First assume that
the $\times$ in the $i$-th row is on the right of the one on the $(i+1)$-st as
illustrated in the following picture.

\begin{align*}
\begin{Young}
$$ &&$\circ$ && &$\times$&$\cdot$\cr
$$ &&$\times$ &$\cdot$&$\cdot$ &$\cdot$&$\cdot$\cr
\end{Young}
\end{align*}

\noindent Here we leave a cell empty if we are not sure about its
content with the given information. It is easy to observe that the circled cell in the
above diagram is contained in the Rothe diagram. Now multiplication by
$s_i$ gives the following partial diagram.

\begin{align*}
\begin{Young}
&&$\times$ &$\cdot$&$\cdot$ &$\cdot$&$\cdot$\cr
&&$\cdot$ && &$\times$&$\cdot$\cr
\end{Young}
\end{align*}

\noindent Notice that this operation has no effect outside the rectangle determined
by the crosses on the $i$-th and $(i+1)$-st row.
Therefore we can concentrate on the following partial diagrams.

\begin{align*}
\begin{Young}
$\circ$ && &$\times$\cr
$\times$ &$\cdot$&$\cdot$ &$\cdot$\cr
\end{Young}&&&
\begin{Young}
$\times$ &$\cdot$&$\cdot$ &$\cdot$\cr
$\cdot$ & & &$\times$\cr
\end{Young}
\end{align*}

Now the circled cell in the first partial diagram is contained in
the Rothe diagram $D_\omega$ and the empty cells of the first
diagram may or may not be contained in $D_\omega$. On the other
hand, the second partial diagram shows that after applying $s_i$,
the empty cells are moved to a lower row and their contents are not
changed, but the circled cell is moved to a lower row and the
content is changed to a dot. Therefore the only change in
$D_\omega$ after applying $s_i$ is to remove the circled cell
shown above and to move the empty cells one row below. Therefore
in this case, the length $l(\tilde\omega)$ of $\tilde\omega$ is
decreased.

The other case where the $\times$ in the $i$-th row is on the left
of the one on the $(i+1)$st is similar to the above case and
corresponds to the case where the length of $\tilde\omega$ is
increased. Therefore we have proved the proposition below.
\begin{pro}\label{pro:actionRothe}
Let $\omega$ be a permutation in $S_n$ and let $D$ be its Rothe
diagram. Let $s_i\in S_n$ be an adjacent transposition. Let
$D^\prime$ be the Rothe diagram of $\omega\, s_i$. Then
\begin{enumerate}
\item the equality $l(\omega\, s_i) = l(\omega) -1$ holds if and only if
$D^\prime$ is obtained from $D$ by first removing the element $(i,\omega(i+1))$ of $D$
and then interchanging the $i$-th and
$(i+1)$-st rows of $D$, and
\item the equality $l(\omega\, s_i) = l(\omega) + 1$ holds if and only if
$D^\prime$ is obtained from $D$ by first interchanging
the $i$-th and $(i+1)$-st rows of $D$ and then adding the pair
$(i,\omega(i))$ to (the $i$-th row of) the new diagram.
\end{enumerate}
\end{pro}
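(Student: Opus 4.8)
The plan is to organize the local analysis already carried out before the statement into the two cases distinguished by the relative horizontal positions of the crosses in rows $i$ and $i+1$ of the array $D_\omega(\times)$, and then to obtain the second assertion from the first by an involution argument based on $s_i^2 = 1$.

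First I would record the two facts established in the preceding paragraphs. Right multiplication by $s_i$ acts on $D_\omega(\times)$ by interchanging rows $i$ and $i+1$: indeed $(\omega s_i)(i) = \omega(i+1)$, $(\omega s_i)(i+1) = \omega(i)$, and $(\omega s_i)(r) = \omega(r)$ for every $r \neq i, i+1$. Moreover $l(\omega s_i) = l(\omega) \pm 1$, and the sign is governed by the pair $(i,i+1)$: the length drops precisely when $(i,i+1)$ is an inversion of $\omega$, that is, when the cross in row $i$ lies to the right of the cross in row $i+1$. This identifies the hypotheses of assertions (1) and (2) with the two geometric configurations depicted above.

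The key step is to confirm that the passage from $D_\omega$ to $D_{\omega s_i}$ is local to rows $i$ and $i+1$. For a row index $r \neq i, i+1$ one has $(\omega s_i)(r) = \omega(r)$, so the row condition is unchanged; and although $(\omega s_i)^{-1}$ differs from $\omega^{-1}$ exactly at the two values $\omega(i), \omega(i+1)$ --- where the inverse takes the values $i, i+1$ in the opposite order --- the truth of the defining column condition relative to the fixed index $r$ is unaffected, since for $r \neq i, i+1$ the index $r$ compares the same way to $i$ and to $i+1$. Hence every cell of $D_{\omega s_i}$ outside rows $i, i+1$ coincides with the corresponding cell of $D_\omega$, and it remains only to compare the two rows $i$ and $i+1$. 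This comparison is exactly what the partial $2 \times 4$ pictures encode: in the inversion case the circled cell is $(i, \omega(i+1))$, which lies in $D_\omega$, and after applying $s_i$ it is deleted while the remaining contents of the two rows are interchanged. Thus $D^\prime$ is obtained from $D$ by removing $(i, \omega(i+1))$ and then swapping rows $i$ and $i+1$, which is assertion (1).

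For assertion (2) I would argue via the involution $\tilde\omega = \omega s_i$. When $l(\omega s_i) = l(\omega)+1$ we have $l(\tilde\omega s_i) = l(\tilde\omega) - 1$ because $\tilde\omega s_i = \omega$, so part (1) applies to $\tilde\omega$ and recovers $D = D_\omega$ from $D^\prime = D_{\tilde\omega}$ by deleting $(i, \tilde\omega(i+1)) = (i, \omega(i))$ and swapping rows $i, i+1$. Reversing this recipe --- first undo the row swap, then reinstate the deleted cell --- shows that $D^\prime$ is obtained from $D$ by swapping rows $i$ and $i+1$ and then adjoining $(i, \omega(i))$, as claimed. The main obstacle is the bookkeeping in the two affected rows: verifying that exactly one cell is created or destroyed and that everything else is a clean row interchange. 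This is precisely the content of the local pictures, so the verification reduces to a finite, configuration-by-configuration check.
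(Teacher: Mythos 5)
Your proof is correct and follows essentially the same route as the paper: both observe that right multiplication by $s_i$ swaps rows $i$ and $i+1$ of the cross array, reduce to a local comparison of those two rows, and read off assertion (1) from the partial pictures in the inversion case. Your two refinements --- the explicit algebraic check that cells outside rows $i$, $i+1$ are untouched (comparing $\omega^{-1}$ and $(\omega s_i)^{-1}$ against a fixed row index $r\neq i,i+1$), and deducing assertion (2) from assertion (1) applied to $\tilde\omega=\omega s_i$ using $s_i^2=1$ instead of repeating the mirror-image picture argument --- are sound and slightly tighten what the paper disposes of as ``similar.''
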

Now we are ready to show how the Rothe diagram determines the tower diagram.
Let $\omega$ be a permutation and $\alpha$ be a reduced expression representing $\omega$. Let $\mathcal T_\omega$
(resp. $D_\omega$) be the tower (resp. Rothe) diagram associated to $\omega$. When there is no risk of confusion, we
omit the subscripts. Write
\[
\mathcal T = \bigsqcup_{i=1}^n (\mathcal T_i) \;\;\; \mbox{\rm and}\;\;\; D = \bigsqcup_{j= 1}^m D^j
\]
where the first sum (resp. second sum) is over the columns of $\mathcal T$ (resp. $D$)
from left to right and where $\mathcal T_i$ (resp. $D^i$) is the $i$th column of
$\mathcal T$ (resp. $D$). Finally we have the following correspondence, as promised at
the beginning of this section.
\begin{thm}\label{thm:rothe2tower}
Let $\omega,\mathcal T$ and $D$ be as above. Then $n = m$ and for each $1\le i\le n$,
we have $|\mathcal T_i| = |D_i|$.
\end{thm}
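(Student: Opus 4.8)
The plan is to prove the sharper statement that $|\mathcal T_c|=|D_c|$ for \emph{every} column index $c$, from which $n=m$ is immediate since the last nonempty column then coincides on the two sides. I would induct on the length $\ell=\ell(\omega)$, carrying along the following auxiliary claim, which is the real content of the theorem:
\[
(\star)\qquad \ell(\omega s_i)=\ell(\omega)+1\ \Longrightarrow\ i^{\searrow}\mathcal T_\omega=\mathcal T_\omega\cup\{\text{a cell in column }\omega(i)\}.
\]
Here $\mathcal T_\omega$ is well defined by Corollary \ref{cor:perm2td}; since the length increases the slide does not terminate (Theorem \ref{thm:SRW2RW}), so a genuine cell is produced, and by Lemma \ref{lem:flightVSslide}(a) it is a corner cell of $\mathcal T_{\omega s_i}$. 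The base case $\ell=0$ is trivial (both diagrams are empty). The two assertions will be interleaved in a non-circular order: for each $\ell\ge 1$ I first deduce the column equality at level $\ell$ from the column equality together with $(\star)$ at level $\ell-1$, and then I deduce $(\star)$ at level $\ell$ from the column equality just established at level $\ell$.

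For the reduction of the theorem to $(\star)$ I would first note, directly from the definition of $D_\omega$, that the number of cells in its column $c$ equals $\#\{p<\omega^{-1}(c):\omega(p)>c\}$, the number of values exceeding $c$ that precede $c$ in the one-line notation. Proposition \ref{pro:actionRothe}(2) then says that when $\ell(\omega s_i)=\ell(\omega)+1$ the diagram $D_{\omega s_i}$ is obtained from $D_\omega$ by interchanging rows $i$ and $i+1$ (which changes no column size) and then adjoining the cell $(i,\omega(i))$; hence $|D_c(\omega s_i)|=|D_c(\omega)|+\delta_{c,\omega(i)}$. This runs exactly parallel to the tower side: writing a reduced word for $\omega s_i$ by appending $i$ to one for $\omega$ gives $\mathcal T_{\omega s_i}=i^{\searrow}\mathcal T_\omega$, so by $(\star)$ at level $\ell-1$ we get $|\mathcal T_c(\omega s_i)|=|\mathcal T_c(\omega)|+\delta_{c,\omega(i)}$. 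Combining these with the inductive column equality for $\omega$ yields the column equality for $\omega s_i$.

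It then remains to prove $(\star)$ for a permutation $\pi$ of length $\ell$, and here I would exploit the theorem already established at level $\ell$: the column heights of $\mathcal T_\pi$ are known explicitly, $t_c:=|\mathcal T_c|=\#\{p<\pi^{-1}(c):\pi(p)>c\}$, and column $c$ occupies precisely the cells $(c,0),\dots,(c,t_c-1)$. Feeding these heights into Definition \ref{def:sliding}, the slide of $i$ becomes a deterministic cascade: starting from $v=i$ one locates the leftmost tower reaching the diagonal $x+y=v-1$ and either deposits the new cell (when that tower's top is flush with the diagonal, or when the relevant column is empty) or, if the tower rises strictly higher, passes through it and repeats with $v+1$ on the towers strictly to its right. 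The goal is to show that this cascade halts precisely at column $\pi(i)$.

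The main obstacle is exactly this last step: translating the cascade, stage by stage, into the inversion statistics $t_c$ and verifying that the halting column is $\pi(i)$. I expect this to require a careful case analysis separating the two ways the cascade advances (the regimes \textbf{(S1)} and \textbf{(S2)} of Definition \ref{def:sliding}), together with an auxiliary induction on the number of towers the slide passes through, with Lemma \ref{lem:ax-zigzag} controlling the bookkeeping on consecutive flight numbers. Once $(\star)$ is in hand the induction closes, and since all column sizes then agree we obtain $n=m$ and $|\mathcal T_i|=|D_i|$ for every $i$, as claimed.
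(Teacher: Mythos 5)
Your reduction is set up correctly and the interleaved induction is non-circular as you describe it: the formula $|D_c(\omega)|=\#\{p<\omega^{-1}(c):\omega(p)>c\}$, the consequence $|D_c(\omega s_i)|=|D_c(\omega)|+\delta_{c,\omega(i)}$ of Proposition \ref{pro:actionRothe}(2), and the identity $\mathcal T_{\omega s_i}=i^{\searrow}\mathcal T_\omega$ are all right, so the column equality at level $\ell$ does follow from the column equality and $(\star)$ at level $\ell-1$. But you never prove $(\star)$, and $(\star)$ is the entire content of the theorem. Your last paragraph -- ``translating the cascade, stage by stage, into the inversion statistics $t_c$ and verifying that the halting column is $\pi(i)$'' -- is precisely the statement to be proved, and announcing that it requires ``a careful case analysis'' with Lemma \ref{lem:ax-zigzag} ``controlling the bookkeeping'' is a description of the difficulty, not an argument. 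Note also that you cannot shortcut this step by comparing columns one level up: the columns of $\mathcal T_{\pi s_i}$ are known to match those of $D_{\pi s_i}$ only once the theorem is available at level $\ell+1$, which is exactly what is being proved. So in your scheme a genuine, direct analysis of the slide is unavoidable, and moreover it is needed for \emph{every} ascent $i$ of \emph{every} $\pi$ -- a substantially stronger claim than the induction requires.

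The paper closes this gap with one idea your proposal is missing: you are free to choose \emph{which} transposition to peel off, and a good choice makes the slide analysis trivial. The paper takes $(i_0,j_0)$ to be the lowest cell of the first non-empty column of $D_\omega$ and sets $s=s_{i_0}$; then $\omega(i_0+1)=j_0$ and $l(\omega s)=l(\omega)-1$. The key point is that emptiness of columns $1,\ldots,j_0-1$ of $D_\omega$ forces $\omega$ to fix $1,\ldots,j_0-1$, so by the induction hypothesis all towers of $\mathcal T_{\omega s}$ strictly to the left of column $j_0$ are empty, while tower $j_0$ has height exactly $i_0-j_0$, with its top cell on the diagonal $x+y=i_0-1$. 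The slide of $i_0$ into $\mathcal T_{\omega s}$ is then a single application of \textbf{(S2)}(a): no zigzag can occur because there is nothing to the left to zigzag through, and the new cell lands on top of tower $j_0=(\omega s)(i_0)$, matching the cell that Proposition \ref{pro:actionRothe} adds to $D_{\omega s}$. In other words, your induction can be repaired by proving $(\star)$ only for this one well-chosen descent per permutation rather than for all ascents; as written, the proposal leaves its hardest and essential step as an unproven claim.
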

\begin{proof}
We argue by induction on the length $l= l(\omega)$ of $\omega$.
The case $l=1$ is trivial. We assume the result for $l-1$ and let
$\omega$ be of length $l$. Let $(i_0, j_0)$ be the lowest cell in the first non-empty
column of $D_\omega$ as illustrated with a bullet in the following diagram.
\begin{align*}
\begin{Young}
$\times$&$\cdot$ &$\cdot$ &$\cdot$ &$\cdot$ &$\cdot$ &$\cdot$ \cr
$\cdot$&$\times$ &$\cdot$ &$\cdot$ &$\cdot$ &$\cdot$ &$\cdot$ \cr
$\cdot$&$\cdot$ &$\circ$ &? &? &? &? \cr
$\cdot$&$\cdot$ &$\bullet$&? &? &? &? \cr
$\cdot$&$\cdot$ &$\times$ &$\cdot$ &$\cdot$ &$\cdot$ &$\cdot$ \cr
$\cdot$&$\cdot$ &$\cdot$ &? &? &? &? \cr
\end{Young}
\end{align*}
Now let $s= s_{i_0}$. Then we claim that $l(\omega s)= l-1$.
Indeed, by the choice of $i_0$, there is a cross just below
$(i_0,j_0)$ and the cross on the $i_0$-th row is on the right of this
one. Thus by Proposition \ref{pro:actionRothe}, the Rothe diagram of $D_{\omega s}$ is
obtained by removing the cell
$(i_0,j_0)$ and possibly, moving some other cells one row below and hence the length
decreases. Thus by the induction hypothesis, the tower diagram $\mathcal T_{\omega s}$
and the Rothe diagram $D_{\omega s}$ satisfy the conclusion of
the theorem. Thus
\[
\mathcal T_{\omega s} = \bigsqcup_{i=1}^n (\mathcal T_i^\prime) \;\;\; \mbox{\rm and}
\;\;\; D_{\omega s} = \bigsqcup_{i= 1}^n D_j^\prime
\]
where for each $i$, we have $|\mathcal T_i^\prime|=|D_i^\prime|$. Now if we apply $s$
once more to the Rothe diagram
$D_{\omega s}$ we will obtain $D$. Furthermore $j_0$ is the unique integer such that
$|D_{j_0}^\prime| \neq |D_{j_0}|$ and we have $|D_{j_0}^\prime| = |D_{j_0}| + 1$.
On the other hand, the first non-empty column of $T_{\omega s}$ is $\mathcal T_{j_0}$,
since the Rothe diagram of $\omega s$ has this property. Thus the number of boxes on
this column is
$i_0 - j_0$. Therefore if we slide $i_0$ to the tower diagram $\mathcal T_{\omega s}$,
it will stop on the top of the tower $\mathcal T_{j_0}$, as required.
\end{proof}
\section{From tower diagrams to Rothe diagrams}
\label{section:tower2rothe}
Given a permutation $\omega$, we write $\mathcal T_\omega$ for the tower
diagram of $\omega$ and $n_\omega$ for the natural word associated
to $\mathcal T_\omega$. In this section, we prove that it is possible to
recover the Rothe diagram $D_\omega$ from $\mathcal T_\omega$. Again, when
there is no risk of confusion, we omit the subscripts. It follows
from the previous section that the tower diagram $\mathcal T$ determines
the number of boxes on any given column of the Rothe diagram. To
recover the Rothe diagram, we need to determine the vertical
positions of the boxes of the non-empty columns. In order to
achieve this, we introduce \emph{virtual sliding} of words, the
\emph{virtual tower diagram} and the \emph{complete tower
diagram} of a permutation.

Let $\alpha$ be a reduced word for the permutation $\omega$. Write
$$\alpha = \alpha_1\, \alpha_2\, \ldots\, \alpha_l$$ as a product of
transpositions where $l$ is the length of $\omega$. The
\emph{virtual sliding algorithm} is obtained by extending the
sliding algorithm to negative integers as follows. Recall that the
sliding of $\alpha$ slides the word $\alpha_1\, \alpha_2\, \ldots\, \alpha_l$
along the lines $y = -x +\alpha_j$ as $j$ runs from $1$ to $l$ where
the line $y = 0$ is a border for this sliding.

On the other hand, the \textbf{\emph{virtual sliding}} of the word $\alpha$ slides the
$l$-tuple  $(-\alpha_l,-\alpha_{l-1}, \ldots,-\alpha_1)$
along the lines
$y = -x -\alpha_j$ as $j$ runs from $l$ to $1$ and the border for this sliding is the line
$x =0$. We agree that the rules of
sliding explained in Section \ref{Section:SlidingIntoTower} also apply to the virtual
sliding. The diagram $\mathcal T^-$ obtained at the
end is called the \textbf{\emph{virtual tower diagram}} of $\alpha$.

\begin{ex} Let $\alpha = 3452312$. Then the virtual tower
diagram of $\mathcal T$ of $\alpha$ is given as follows.
$$
\line(0,-1){50}
\line(-1,0){50}
\put(-30,0){\tableau{\\{}}}
\put(-20,0){\tableau{\\{}}}
\put(-10,0){\tableau{\\{}}}
\put(-30,-10){\tableau{\\{}}}
\put(-20,-10){\tableau{\\{}}}
\put(-10,-10){\tableau{\\{}}}
\put(-10,-40){\tableau{\\{}}}
$$
Now we put the tower and the virtual tower diagrams of $\alpha$ together as follows.
$$
\put(0,0){\line(0,-1){50}}
\put(0,0){\line(-1,0){30}}
\put(0,0){\line(0,1){10}}
\put(0,0){\line(1,0){50}}
\put(0,20){\tableau{\\{}\\{}}}
\put(10,20){\tableau{\\{}\\{}}}
\put(20,30){\tableau{\\{}\\{}\\{}}}
\put(-30,0){\tableau{\\{}}}
\put(-20,0){\tableau{\\{}}}
\put(-10,0){\tableau{\\{}}}
\put(-30,-10){\tableau{\\{}}}
\put(-20,-10){\tableau{\\{}}}
\put(-10,-10){\tableau{\\{}}}
\put(-10,-40){\tableau{\\{}}}
$$
\end{ex}
The above diagram is now called the \textbf{\emph{complete tower diagram}} of $\alpha$
and we denote it by $\Upsilon_{\omega} = (\mathcal T_\alpha,\mathcal T_\alpha^-)$.

\begin{rem} It is easy to see that the virtual tower diagram for $\alpha$ is obtained from the tower diagram of the
reverse word $\alpha^{-1}$ of $\alpha$ by reflecting it along the
line $y = -x$. Hence the results of the previous sections are also
valid for virtual tower diagrams. In particular, the complete
tower diagram is uniquely determined by the permutation $\omega$
and conversely, the complete tower diagram determines a unique
permutation. However, since the virtual tower diagram is
determined by the tower diagram, arbitrary tower diagrams cannot
be joined to give a complete diagram. This reflects the similar
property of the Rothe diagrams. This is the reason for not
introducing the virtual sliding at the beginning: we would like to
have the freedom to chose the tower diagram.
\end{rem}
\begin{rem} Similar to Theorem \ref{thm:rothe2tower}, we can also prove that the
virtual tower diagram of $\omega$ is
obtained by pushing all the nodes of the Rothe diagram to
left and then reflecting everything on the left border of
the Rothe diagram. We leave the proof as an exercise to the
interested reader.
\end{rem}

Let $\Upsilon= (\mathcal T, \mathcal T^-)$ be a complete tower diagram. A \textbf{\emph{standard labelling}} of $\Upsilon$ is a pair $(f,f^-)$ of functions
where $f$ is a standard labelling of the tower and $f^-$ is a standard labelling of the virtual tower such that the
reading word of $f$ is reverse to the reading word of $f^-$. In the particular case where $f$ is the natural label for
the upper tower, we call the pair $(f,f^-)$ the \textbf{\emph{natural labelling}} of the complete tower diagram. Moreover a
complete tower diagram with a standard label is called a \textbf{\textit{standard complete tower tableau}}.

In the above example, the natural label for the word $\alpha = 3452312$ is given as
follows.
$$
\put(0,0){\line(0,-1){50}}
\put(0,0){\line(-1,0){30}}
\put(0,0){\line(0,1){10}}
\put(0,0){\line(1,0){50}}
\put(0,20){\tableau{\\{7}\\{6}}}
\put(10,20){\tableau{\\{5}\\{4}}}
\put(20,30){\tableau{\\{3}\\{2}\\{1}}}
\put(-30,0){\tableau{\\{7}}}
\put(-20,0){\tableau{\\{4}}}
\put(-10,0){\tableau{\\{2}}}
\put(-30,-10){\tableau{\\{6}}}
\put(-20,-10){\tableau{\\{3}}}
\put(-10,-10){\tableau{\\{1}}}
\put(-10,-40){\tableau{\\{5}}}
$$
Observe, in the above virtual tower diagram, that the row just below the cell labelled
by 1 is empty. The next lemma states that this is true in general when the complete
diagram has natural labels. It is also easy to produce examples of other standard
labelling for which the result does not hold. The proof is deferred to \ref{Appendix}.
\begin{lem}\label{lem:noZigzag}
Let $(T,T^-)$ be a natural complete tower tableau. Let $C$ be the cell in $T^-$ with
label $1$. Then the row just below the cell $C$ is empty.
\end{lem}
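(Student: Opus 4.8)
The plan is to transport the statement, via the reflection of the Remark, to an assertion about the ordinary tower diagram of the reverse natural word, and then to read off the answer from the Rothe diagram by means of Theorem~\ref{thm:rothe2tower}.

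Write $\eta = \mathrm{Read}(\mathbb T)$ for the natural word of $\omega$ and $\overline\eta$ for its reverse, so that $\overline\eta$ is a reduced word for $\omega^{-1}$. Let $\mathcal T = \mathcal T_\omega$ be the upper tower diagram, let $p$ be the index of its left-most non-empty tower, and put $m = |\mathcal T_p|$ and $\beta = p+m-1$. First I would record two facts. By the construction of the natural labelling, the last letter of $\eta$ is the flight number of the top cell $(p,m-1)$ of the left-most tower; since nothing lies to its left this cell satisfies $(\mathbf{F1})$, so the last letter of $\eta$, equivalently the first letter of $\overline\eta$, equals $\beta$. Next, by the Remark, $T^-$ is the reflection in the line $y=-x$ of the standard tower tableau on $\mathcal S := \mathcal T_{\omega^{-1}}$ whose reading word is $\overline\eta$, that is, of $R(\overline\eta)$. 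Under this reflection each tower $\mathcal S_i$ becomes a horizontal row of $T^-$, rows with larger $i$ lying further below the $x$-axis. The cell $C$ carries label $1$, hence corresponds to the unique cell of $R(\overline\eta)_{\le 1}$, namely the cell created by sliding the first letter $\beta$ of $\overline\eta$ into the empty diagram, which is $(\beta,0)\in\mathcal S_\beta$. Consequently ``the row just below $C$'' is the reflection of the tower $\mathcal S_{\beta+1}$, and the Lemma becomes the assertion that $\mathcal S_{\beta+1}$ is empty.

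Then I would translate emptiness of $\mathcal S_{\beta+1}$ into the language of Rothe diagrams. Applying Theorem~\ref{thm:rothe2tower} to the permutation $\omega^{-1}$ (with reduced word $\overline\eta$) gives $|\mathcal S_{\beta+1}| = |D^{\beta+1}_{\omega^{-1}}|$, and a direct count shows that the $c$-th column of $D_{\omega^{-1}}$ has size $\#\{\,i>c : \omega(i)<\omega(c)\,\}$, the $c$-th entry of the Lehmer code of $\omega$. Thus it suffices to prove
\[
\#\{\,i>\beta+1 : \omega(i)<\omega(\beta+1)\,\}=0 .
\]
Finally I would establish this by an elementary analysis of $\omega$ near the position $\beta+1$. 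Since $p$ is the left-most non-empty column of $\mathcal T$, Theorem~\ref{thm:rothe2tower} shows that columns $1,\dots,p-1$ of $D_\omega$ are empty, and an easy induction then forces $\omega$ to fix $1,2,\dots,p-1$. Because column $p$ is non-empty one gets $\omega(p)>p$; moreover every position $i$ with $p\le i<\omega^{-1}(p)$ carries a value exceeding $p$, and these positions are exactly the cells of column $p$, so $m=\omega^{-1}(p)-p$, i.e. $\omega^{-1}(p)=p+m=\beta+1$. Hence $\omega(\beta+1)=p$, while the values smaller than $p$, namely $1,\dots,p-1$, already occupy the positions $1,\dots,p-1$, all of which lie to the left of $\beta+1$. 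Therefore no position $i>\beta+1$ can carry a value less than $p=\omega(\beta+1)$, which is exactly the displayed identity; so $\mathcal S_{\beta+1}=\varnothing$ and the Lemma follows.

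The routine part is the last paragraph, which is pure permutation combinatorics. The step I expect to need the most care is the transport through the reflection in the first two paragraphs, in particular pinning down that $C$ reflects to $(\beta,0)$ and that ``the row just below $C$'' is precisely the tower $\mathcal S_{\beta+1}$ rather than an off-by-one neighbour, since the whole conclusion hinges on this index. An alternative, self-contained route would be to analyse the virtual sliding of $\overline\eta$ directly and show by induction on its decreasing runs that no cell is ever created in column $\beta+1$; this avoids Rothe diagrams but is considerably more delicate, which is why I would prefer the argument above.
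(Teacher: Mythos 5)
Your proposal is correct, but it takes a genuinely different route from the paper. The paper's own proof never leaves the sliding machinery: it analyses the virtual sliding of the reverse natural word $\tilde\eta$ directly, using that $\tilde\eta$ is a concatenation of strictly decreasing runs whose terminal terms strictly increase, so that the block of $T^-$ containing $C$ is the topmost block and the cell just below $C$ could only be created by a second occurrence of the letter $p$ (the terminal term of the first run), which never recurs. Your argument instead transports everything through the reflection Remark to the ordinary tower diagram $\mathcal S=\mathcal T_{\omega^{-1}}$ of the reverse word, and then uses Theorem~\ref{thm:rothe2tower} twice: once for $\omega$, to identify the data $(p,m)$ of the leftmost tower with $\omega^{-1}(p)=p+m=\beta+1$ and to force $\omega$ to fix $1,\dots,p-1$; and once for $\omega^{-1}$, to convert emptiness of $\mathcal S_{\beta+1}$ into the vanishing of the $(\beta+1)$-st Lehmer code entry of $\omega$, which your permutation analysis then gives. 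This is legitimate and non-circular, since Theorem~\ref{thm:rothe2tower} is proved in Section~\ref{section:rothe2tower} independently of Lemma~\ref{lem:noZigzag}, and it has two advantages: every step reduces to a concretely checkable fact about $\omega$, and you prove emptiness of the entire row rather than only of the cell directly below $C$ (the paper gets the row from the cell because both are adjacent to the border, but leaves this implicit). What it costs is heavier dependencies: you need the reflection Remark, which the paper asserts without proof, whereas the paper's argument needs nothing from Section~\ref{section:rothe2tower} at all. Two small cautions: your column count for $D_{\omega^{-1}}$ uses the standard convention $\omega(i)>j$, $\omega^{-1}(j)>i$, which is what the paper's examples, Proposition~\ref{pro:actionRothe} and Theorem~\ref{thm:rothe2tower} actually use (the displayed definition of $D_\omega$ in Section~\ref{section:rothe2tower} has the inequalities inverted, evidently a typo); and, as you yourself flag, the identification of $C$ with $(\beta,0)$ and of the row below with $\mathcal S_{\beta+1}$ is the step that must be done carefully -- your verification of it (first letter of $\overline\eta$ equals $\beta$ by \textbf{(F1)} applied to the top cell of the leftmost tower, and $R(\overline\eta)_{\le 1}=\{[(\beta,0),1]\}$) is sound.
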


As in the case of Rothe diagrams, we want to determine the effect
of the multiplication of $\omega$ by an adjacent transposition
$s_i$ from the right on the complete tower diagram with natural
labels. Actually we only need to determine the effect of
multiplication by the last term of the natural word $\eta$ of
$\omega$. We have the following result.
\begin{pro}\label{pro: actionOfTransII}
Let $\eta = \eta_1\eta_2\ldots\eta_l$ be the natural word for the permutation
$\omega$ and let $s =s_{\eta_l}$. Then the complete tower diagram of
$\omega s$ is obtained from that of $\omega$ in two steps as follows.
\begin{enumerate}
\item From the natural tower tableau of $\omega$, we remove the cell with label
$l$.
\item From the natural virtual tower tableau of $\omega$, we remove the cell
labelled by $1$ and slide the cells on the left of this cell,
if any exists, further to the next row.
\end{enumerate}
\end{pro}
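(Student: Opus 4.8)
The plan is to analyze the two coordinates of the complete tower diagram separately, after recording the elementary fact that $\eta_1\eta_2\ldots\eta_{l-1}$ is a reduced word for $\omega s$. Indeed, since $\eta$ is reduced and $s=s_{\eta_l}$, the word $s_{\eta_1}\ldots s_{\eta_{l-1}}$ represents $\omega s$ and has length $l-1$; as $l(\omega s)\ge l(\omega)-1=l-1$, it is reduced, hence lies in $\mathrm{SRW}(\mathbb Z^+)$ by Theorem~\ref{thm:SRW2RW}. Consequently $\mathcal T_{\omega s}=\mathrm{shape}(R(\eta_1\ldots\eta_{l-1}))$, while the virtual tower $\mathcal T^-_{\omega s}$ is, by the reversal remark of this section, the reflection along $y=-x$ of the tower diagram $\mathcal T_{w}$ of the reverse word $w=\eta_{l-1}\ldots\eta_1$.

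For the tower part (step 1), I would use that the natural labelling assigns the top label $l$ to the top cell $c$ of the leftmost tower of $\mathcal T_\omega$, and that this is exactly the corner cell created by the last slide in $R(\eta)$; by Lemma~\ref{lem:flightVSslide}(a) its flight number is $\eta_l$. Since $R(\eta_1\ldots\eta_{l-1})=R(\eta)_{\le l-1}=c^\nwarrow\mathbb T$, taking shapes gives $\mathcal T_{\omega s}=c^\nwarrow\mathcal T_\omega$, and by the inductive definition of the natural labelling (remove the top of the leftmost tower) the restriction $c^\nwarrow\mathbb T$ is again natural. This is precisely the assertion of step 1.

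For the virtual part (step 2), the reversal remark identifies $\mathcal T^-_\omega$ with $R(\eta^{-1})$ and $\mathcal T^-_{\omega s}$ with $R(w)$, where $\eta^{-1}=\eta_l\,w$. Thus the entire content of step 2 is to understand how prepending the single slide of $\eta_l$ to the word $w$ changes the recording tableau. The first slide creates the cell $(\eta_l,0)$, which receives label $1$ and corresponds under reflection to the cell $C$ of $\mathbb T^-$. I would show that $R(\eta^{-1})$ and $R(w)$ coincide on every column other than $\eta_l$ and $\eta_l+1$, and that on these two columns the passage from $R(\eta^{-1})$ to $R(w)$ deletes $(\eta_l,0)$ and moves the stack $(\eta_l,1),(\eta_l,2),\ldots$ sitting above it onto the anti-diagonally adjacent positions $(\eta_l+1,0),(\eta_l+1,1),\ldots$ at the foot of the next column. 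Translated back through the reflection, deleting $C$ and sliding the cells to its left one row further down is exactly this move, which is the statement of step 2.

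The hard part will be controlling the interaction of the prepended slide with all the subsequent slides of $w$: a priori the extra cell $(\eta_l,0)$ could divert the flight paths of arbitrarily many later letters and cascade the displacement past column $\eta_l+1$. The decisive input here is Lemma~\ref{lem:noZigzag}, which says that in the natural virtual tableau the row just below $C$ is empty; reflected, this means column $\eta_l+1$ of $R(\eta^{-1})$ is empty, so the displaced stack has a free column to drop into and there is no cascade, making the one-row shift both well defined and reversible. I would make this precise by induction on the length of $w$, comparing the slide of each letter into $\{(\eta_l,0)\}$ adjoined to the current diagram against its slide into the current diagram alone, using the rules (S1) and (S2) of Definition~\ref{def:sliding} together with the commutation and braid identities for slides (Lemmas~\ref{Lemma:SlidingFarAway} and~\ref{Lemma:SlidingClose}) to confirm that only the letters whose flight paths rest on $(\eta_l,0)$ are affected and all others slide identically. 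By the reversal remark the same argument can be run directly with the virtual sliding of this section, since every earlier result applies verbatim to virtual towers.
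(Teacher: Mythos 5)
Your proposal is correct and takes essentially the same route as the paper's own proof: step (1) is handled by removing the corner cell created by the last slide, and step (2) by comparing the (virtual) sliding of $\eta_l\,w$ with that of $w$ alone, with Lemma~\ref{lem:noZigzag} supplying the empty row below the cell $C$ that rules out any cascade of displacements. The only real difference is one of rigor: where the paper simply asserts ``it is clear that the only effect of the removal of $c$ is on the remaining cells in the $j$-th row,'' you spell this out as an invariant-preserving induction over the letters of $w$ using the rules \textbf{(S1)}/\textbf{(S2)} (your additional appeal to Lemmas~\ref{Lemma:SlidingFarAway} and~\ref{Lemma:SlidingClose} is unnecessary for that comparison, since those lemmas concern reorderings of slides rather than omission of the first one).
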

\begin{proof}
The first step is trivial. Being the last term of the word, the removed cell is a
corner. Therefore the removal of the cell with label $l$ corresponds to multiplication
by $s_{\eta_l}$.

The second step, the effect on the virtual tower is less obvious. Let $c$ be the
cell in the virtual tower tableau with label $1$ and assume that it is on the $j$-th row.
Thus removing $\eta_l$ from $\eta$ corresponds to the removal of the cell $c$ from the virtual tableau.
Note that, by Lemma \ref{lem:noZigzag}, the $(j+1)$-st row is empty.

Now if there is no cell on the $j$-th row, then removal of the cell $c$ will not effect the rest of the diagram and we
are done. Thus we assume that $c$ is not the unique cell in its row. It is clear that the only effect of the removal of
$c$ is on the remaining cells in the $j$-th row and the effect is to slide these cells to the next row, as required.
\end{proof}

Finally we are ready to prove the main result of this section. First we
explain the algorithm. Given a complete tower diagram $\Upsilon
=(T,T^-)$ with natural labels. Let $l$ be the size of $T$. Construct the set
$$I = \{ (u,v) : ([u,i],[(v,j])\in T\times T^-, i+j = l+1 \}$$
of pairs of cells from the complete tower diagram whose labels sum up to $l+1$. Then
for each
$(u,v)\in I$, construct the vertical shadow of the cell $u$ and
the horizontal shadow of the cell $v$. Observe that these shadows intersect at the
point $(u_1,-v_2)$. We illustrate this below.
$$
\put(0,0){\line(0,-1){50}}
\put(0,0){\line(-1,0){30}}
\put(0,0){\line(0,1){10}}
\put(0,0){\line(1,0){50}}
\put(-30,-20){\tableau{\\{v}}}
\put(20,10){\tableau{\\{u}}}
\put(20,10){\linethickness{0.5mm}\line(0,-1){60}}
\put(30,10){\linethickness{0.5mm}\line(0,-1){60}}
\put(-30,-20){\linethickness{0.5mm}\line(1,0){80}}
\put(-30,-30){\linethickness{0.5mm}\line(1,0){80}}
$$
Finally construct the set $$R_\omega = \{ (u_1,-v_2)\, :\, (u,v)\in I, u=(u_1,u_2), v=(v_1,v_2)\}.$$
We call $R_\omega$ the \textbf{\textit{Rothification}} of the complete tower diagram
of $\omega$.
\begin{ex} The Rothification of the complete tower diagram in the previous example is
the following diagram. Note that
we put bullets to distinguish the cells in the Rothification.
$$
\put(0,0){\line(0,-1){50}}
\put(0,0){\line(-1,0){30}}
\put(0,0){\line(0,1){10}}
\put(0,0){\line(1,0){50}}
\put(0,20){\tableau{\\{7}\\{6}}}
\put(10,20){\tableau{\\{5}\\{4}}}
\put(20,30){\tableau{\\{3}\\{2}\\{1}}}
\put(-30,0){\tableau{\\{7}}}
\put(-20,0){\tableau{\\{4}}}
\put(-10,0){\tableau{\\{2}}}
\put(-30,-10){\tableau{\\{6}}}
\put(-20,-10){\tableau{\\{3}}}
\put(-10,-10){\tableau{\\{1}}}
\put(-10,-40){\tableau{\\{5}}}
\put(0,-10){$\;\circ$}
\put(10,-10){$\;\circ$}
\put(20,-10){$\;\circ$}
\put(0,-20){$\;\circ$}
\put(10,-20){$\;\circ$}
\put(20,-20){$\;\circ$}
\put(20,-50){$\;\circ$}
$$
Now the permutation corresponding to the word $\alpha = 3452312$ is $\omega = 451263$ and its
Rothe diagram is given by the following diagram. Again the cells of the Rothe diagram is marked with a bullet.
\begin{align*}
\begin{Young}
$\circ$&$\circ$ &$\circ$ &$\times$ &$\cdot$ &$\cdot$  \cr
$\circ$&$\circ$ &$\circ$ &$\cdot$ &$\times$ &$\cdot$ \cr
$\times$&$\cdot$ &$\cdot$ &$\cdot$ &$\cdot$ &$\cdot$\cr
$\cdot$&$\times$ &$\cdot$&$\cdot$ &$\cdot$  &$\cdot$ \cr
$\cdot$&$\cdot$ &$\circ$ &$\cdot$ &$\cdot$ &$\times$ \cr
$\cdot$&$\cdot$ &$\times$ &$\cdot$ &$\cdot$ &$\cdot$ \cr
\end{Young}
\end{align*}
\end{ex}
The above example indicates that the Rothification of the complete tower diagram of $\omega$ might give us the Rothe
diagram of $\omega$. Next we prove this.
\begin{thm}\label{thm:rothefication}
Let $w$ be a permutation and let $\Upsilon_\omega$ be the complete tower diagram of $\omega$
with natural labels.
Then $R_\omega = D_\omega$.
\end{thm}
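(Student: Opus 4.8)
The plan is to argue by induction on the length $l = l(\omega)$, running the induction in parallel with the two one-step descriptions established just above: Proposition~\ref{pro:actionRothe}, which records the effect on the Rothe diagram of right multiplication by an adjacent transposition, and Proposition~\ref{pro: actionOfTransII}, which records the effect on the complete tower diagram of stripping the last letter of the natural word. The base case $l \le 1$ is immediate. For the inductive step I would set $s = s_{\eta_l}$ and $\omega' = \omega s$. Since $\eta = \eta_1\ldots\eta_l$ is reduced, $\eta_1\ldots\eta_{l-1}$ is a reduced word for $\omega'$, and by Proposition~\ref{pro: actionOfTransII} together with Corollary~\ref{cor:perm2td} its complete tower diagram $\Upsilon_{\omega'}$ with natural labels is obtained from $\Upsilon_\omega$ by deleting the cell labelled $l$ from the upper tower and the cell labelled $1$ from the virtual tower (and sliding). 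Thus the induction hypothesis applies and gives $R_{\omega'} = D_{\omega'}$.

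Next I would describe both sides of the passage from $\omega'$ to $\omega$ and compare them. On the Rothe side, since $l(\omega) = l(\omega') + 1$, Proposition~\ref{pro:actionRothe}(2) says $D_\omega$ is obtained from $D_{\omega'}$ by interchanging rows $\eta_l$ and $\eta_l + 1$ and then inserting the cell $(\eta_l, \omega'(\eta_l))$. On the Rothification side, I would analyse the index set $I$. Writing $l+1$ for the target sum of labels, every pair $(u,v)\in I$ whose virtual cell $v$ carries label $\ge 2$ corresponds bijectively to a pair of $\Upsilon_{\omega'}$ (the upper label is unchanged, the virtual label drops by $1$, and the sum drops from $l+1$ to $l$), while there is exactly one genuinely new pair, namely the upper cell labelled $l$ paired with the virtual cell labelled $1$. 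The upper cell labelled $l$ is the top of the leftmost tower, so its horizontal coordinate is the leftmost nonempty column, and I would identify the point produced by this new pair with the inserted Rothe cell $(\eta_l, \omega'(\eta_l))$.

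It then remains to match the two transformations cell by cell. The key point is that the virtual slide in Proposition~\ref{pro: actionOfTransII}(2) moves precisely the virtual cells lying to the left of the label-$1$ cell down by one row, and by Lemma~\ref{lem:noZigzag} the row immediately below that cell is empty, so the slide is a clean shift into vacant space. This is exactly what lets the row interchange of rows $\eta_l$ and $\eta_l+1$ on the Rothe side reduce to a one-row shift: because one of the two interchanged rows is vacant in the relevant range, swapping them merely relocates the affected cells, matching the change in the second coordinate $-v_2$ of the corresponding Rothification points. Combining the bijection on the old pairs with the identification of the new pair closes the induction.

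The step I expect to be the main obstacle is precisely this last matching: pinning down the coordinate of the virtual cell labelled $1$ and verifying that the set of virtual cells displaced by the slide corresponds exactly to the set of Rothe cells displaced by the row interchange, with no spurious extra or missing cells. Lemma~\ref{lem:noZigzag} is the crucial input that prevents the slide from cascading through further rows, and it is what guarantees that the newly inserted pair lands in the position $(\eta_l, \omega'(\eta_l))$ dictated by Proposition~\ref{pro:actionRothe}, rather than colliding with a pre-existing cell.
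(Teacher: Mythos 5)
Your overall scheme is the same as the paper's: induction on $l(\omega)$, stripping the last letter $\eta_l$ of the natural word, and comparing the two one-step descriptions given by Proposition~\ref{pro:actionRothe} on the Rothe side and by Proposition~\ref{pro: actionOfTransII} together with Lemma~\ref{lem:noZigzag} on the tower side. Your bookkeeping for the index set $I$ (each old pair survives with its virtual label dropped by one, and exactly one new pair appears, formed by the upper cell labelled $l$ and the virtual cell labelled $1$) and your use of Lemma~\ref{lem:noZigzag} to turn the row interchange into a clean one-row shift are correct and match what the paper does.

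The gap is precisely the step you flag as the main obstacle, and the tool you propose for it cannot close it. You must show that the new Rothification point, whose column is the leftmost nonempty column $i$ of $\mathcal T_\omega$ and whose row is $\eta_l$, coincides with the inserted Rothe cell $(\eta_l,\omega'(\eta_l))$; that is, you need the identity $i=\omega'(\eta_l)=\omega(\eta_l+1)$. Lemma~\ref{lem:noZigzag} asserts only that the row just below the label-$1$ virtual cell is empty; this prevents the slide from cascading and rules out collisions, but it carries no information whatsoever about the value $\omega(\eta_l+1)$, which is a statement about the permutation, not about the shape of the virtual tower. The paper closes this with a separate argument from the structure of the natural word: since $i$ is the first nonempty column of $\mathcal T_\omega$, the permutation fixes every $a<i$, so one may assume $i=1$; then the natural word contains the letter $1$ exactly once and ends with the block $1\,2\,\cdots\,j$, where $j=\eta_l$ is the height of the first tower, and tracking the number $1$ through $s_{\eta_1}\cdots s_{\eta_l}$ gives $\omega(j+1)=1=i$. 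Without this (or an equivalent) computation the unique new pair is never tied to the cell that Proposition~\ref{pro:actionRothe} inserts, so the inductive step, and with it the whole proof, does not go through.
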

\begin{proof}
We argue by induction on the length $l$ of $\omega$. The case
$l=1$ is trivial. Let $\eta=\eta_1\eta_2\ldots\eta_l$ be the natural word of $\omega$ Let $\tilde\omega =
\omega\, s_{\eta_l}$. Then it is clear that the natural word of $\tilde\omega$ is $\eta_1\eta_2\ldots\eta_{l-1}$. By
induction, the Rothification of the complete tower diagram of $\tilde\omega$ is equal to the Rothe diagram of
$\tilde\omega$. We obtain these diagrams as follows.

To obtain the Rothefication of the complete tower diagram of $\tilde\omega$, we use Proposition \ref{pro:
actionOfTransII}. Assume that the cell with label $l$ in the tower diagram $\mathcal T$ of $\omega$ is on the $i$-th
column and the cell with label $1$ in the virtual tower diagram $\mathcal T^-$ of $\omega$ is on the row $j$. Then
by the construction
of the virtual tower diagram, we have $j= \eta_l$ and and by Lemma \ref{lem:noZigzag}, the row $j+1$ is empty.
Moreover the column $i$ is the first non-empty column of the tower diagram $\mathcal T$ and the height of this
tower is $j-x$ where $x$ is the number of empty columns on the left of the $i$-th column.

With these notation, the Rothefication $R_{\tilde\omega}$ of $\tilde\omega$ is obtained from $R_\omega$ by
removing the cell $(j,i)$ from $R_\omega$ and then moving the rest of the row $j$ to the next row, which was
empty.

On the other hand, by Proposition \ref{pro:actionRothe}, the Rothe diagram $D_{\tilde\omega}$ is obtained
from $D_\omega$ by removing the cell $(j,\omega(j+1))$ and then interchanging the rows $j$ and $j+1$.
Since $D_{\tilde\omega} = R_{\tilde\omega}$, we immediately conclude that the $j$-th row of $D_{\tilde\omega}$,
and hence the $j+1$-st row of $D_\omega$,  are empty. Therefore to finish the proof, we only need to show that the
removed cell $(j,\omega(j+1))$ coincides with the removed cell $(j,i)$, that is, we need to show that $i= \omega
(j+1)$. Indeed, with this equality, reversing the above steps we obtain the desired equality.

To prove $i = \omega(j+1)$, note that since $i$ is the first non-empty column of $\mathcal T$, we have
$\omega(a) = a$ for any $a<i$. Therefore the first non-empty row of the virtual tower diagram $\mathcal T^-$ is
the $i$-th row. Thence, without loss of generality, we can assume that $i=1$ and hence the height of the
tower at $1$ is $j$ and we are to prove that $\omega(j+1) = 1$. In this case, the natural word of $\omega$ contains
only one copy of the letter $1$ and it ends with the sequence $1\, 2\, \ldots j$. This means that the number $1$ is
moved only by the sequence $1\, 2\, \ldots j$ and hence is at the $j+1$-st place, that is, $\omega(j+1) = 1$, as
required.
\end{proof}
\appendix
\section{Proofs of  the technical lemmas}
\label{Appendix}
\begin{proof} (of Lemma \ref{lem:ax-zigzag})
Let $c_1$ be the lower cell. If the flight path of $c_1$ has only one element, then
the flight path of $c_2$ also has only element. Therefore the flight numbers of $c_1$
and $c_2$ are just the sum of their coordinates and hence the result follows
directly from our assumption.

Otherwise, let $\mathcal T_\star$ be the first tower on the left of $\mathcal T_i$ to which one of $c_1$ or $c_2$
hits. Then there are two possible cases illustrated with the following pictures.

\begin{center}
\begin{picture}(80,60)

\put(20,50){\tableau{{c_2^\prime}\\{}\\{}\\{c_1\prime}\\{}\\{}}}
\put(50,20){\tableau{{c_2}\\{}{}\\{c_1}}}
\multiput(0,50)(0,0){1}{\line(1,-1){25}}
\multiput(25,25)(0,0){1}{\line(0,1){10}}
\multiput(25,35)(0,0){1}{\line(1,-1){30}}

\multiput(0,70)(0,0){1}{\line(1,-1){25}}
\multiput(25,45)(0,0){1}{\line(0,1){10}}
\multiput(25,55)(0,0){1}{\line(1,-1){30}}

\end{picture}\hskip.75in
\begin{picture}(80,60)

\put(20,30){\tableau{{}\\{c_1\prime}\\{}\\{}}}
\put(50,20){\tableau{{c_2}\\{}{}\\{c_1}}}
\multiput(0,50)(0,0){1}{\line(1,-1){25}}
\multiput(25,25)(0,0){1}{\line(0,1){10}}
\multiput(25,35)(0,0){1}{\line(1,-1){30}}

\multiput(25,55)(0,0){1}{\line(1,-1){30}}

\end{picture}\hskip.15in
\end{center}

In the first case, both $c_1$ and $c_2$ make zigzag at $\mathcal T_\star$ and hence
visit the cells $c_1^\prime$ and $c_2^\prime$. Observe that, in this case, the flight
number of $c_i^\prime$ is equal to that of $c_i$ for $i=1,2$. Thus the result follows
from induction on the number of towers on the left of the tower we begin with.

In the second case, the cell $c_1$ makes zigzag and $c_2$ passes through without a
zigzag. In this case, we may replace the tower $\mathcal T_\star$ of $\mathcal T$ with
$T_\star^\prime$ given by the following picture.
\begin{center}
\begin{picture}(80,60)
\put(20,50){\tableau{{c_2^\prime}\\{}\\{}\\{c_1\prime}\\{}}}
\end{picture}\hskip.15in
\end{center}
Now the new tableau $\mathcal T^\prime$ has the cells $c_1^\prime$ and $c_2^\prime$
with flight paths. Here the flight numbers satisfies
\[
\mathrm{flight}\#(c_1,\mathcal T) = \mathrm{flight}\#(c_1^\prime,\mathcal T) =
\mathrm{flight}\#(c_1\prime,\mathcal T^\prime)
\]
and
\[
\mathrm{flight}\#(c_2,\mathcal T) =
\mathrm{flight}\#(c_2\prime,\mathcal T^\prime).
\]
Moreover the distance between the cells $c_1^\prime$ and $c_2^\prime$ is $\mid j_1-j_2 \mid$. Again the result
follows from induction on the number of towers on the left of the tower.

Finally, in the special case where $\mid j_1-j_2\mid = 1$, the second case above will never appear. Therefore
the difference between the flight numbers of $c_1$ and $c_2$ will never change, as required.
\end{proof}

\begin{proof}(of Lemma \ref{Lemma:SlidingFarAway}) Without lost of generality we assume that $\beta\geq \alpha+2$.
First consider the slides $\beta^{\searrow}
(\alpha^{\searrow}\mathcal{T})$.

\vskip.1in\noindent \textit{Case {\bf S1}.} We  first assume that
$\mathcal{T}$ has  no squares lying on the diagonal $x+y=\alpha-1$.
Then by the definition, we have  $\alpha^{\searrow} \mathcal{T}:=
(\mathcal{T}_1,\ldots,\mathcal{T}_{\alpha-1}) \sqcup
\alpha^{\searrow} (\mathcal{T}_{\alpha},\ldots)$.

\vskip.1in\noindent \textit{Case {\bf S1}(a).} If  $\mathcal{T}$ has
no squares lying on the diagonal $x+y=\alpha$ then necessarily
$(\alpha,0)\not \in \mathcal{T}_\alpha$ and  we have
$\alpha^{\searrow}
(\mathcal{T}_{\alpha},\ldots):=(\mathcal{T}'_\alpha,\ldots,
\mathcal{T}_k) $ where  $\mathcal{T}_\alpha'=\{ (\alpha,0)\}$ and it
does not contain any square lying on the diagonal $x+y=\beta-1$.
Hence
\begin{align*}
\beta^{\searrow}(\alpha^{\searrow}
(\mathcal{T}_{1},\ldots,\mathcal{T}_{\alpha},\ldots))&=\beta^{\searrow}
(\mathcal{T}_{1},\ldots,\mathcal{T}_{\alpha}',\ldots)\\
&=(\mathcal{T}_{1},\ldots,\mathcal{T}_{\alpha}') \sqcup
\beta^{\searrow}(\mathcal{T}_{\alpha+1},\ldots).
\end{align*}
On the other hand, we have
\begin{align*}
\alpha^{\searrow}(\beta^{\searrow}
(\mathcal{T}_{1},\ldots,\mathcal{T}_{\alpha},\ldots))&=\alpha^{\searrow}(
(\mathcal{T}_{1},\ldots,\mathcal{T}_{\alpha})\sqcup\beta^{\searrow}(
\mathcal{T}_{\alpha+1},\ldots))\\
&=(\mathcal{T}_{1},\ldots,\mathcal{T}_{\alpha}') \sqcup
\beta^{\searrow}(\mathcal{T}_{\alpha+1},\ldots).
\end{align*}
Comparing the last two equations, we see that the required equality
is satisfied.

\vskip.1in\noindent \textit{Case {\bf S1}(b).} If $(\alpha,0) \in
\mathcal{T}_\alpha$ and $(\alpha,1)\not \in \mathcal{T}_\alpha$ then
the slide $\alpha^{\searrow} \mathcal{T}$ terminates without a
result and so the slide $\beta^{\searrow}(\alpha^{\searrow}
\mathcal{T})$ also terminates. Observe that, in this case,
$\mathcal{T}_\alpha$ has no square lying on the diagonal
$x+y=\beta-1$ and this yields the equality
$$\alpha^{\searrow}(\beta^{\searrow} (\mathcal{T}_{1},\ldots,\mathcal{T}_{\alpha},\ldots))=
\alpha^{\searrow}(
(\mathcal{T}_{1},\ldots,\mathcal{T}_{\alpha})\sqcup
\beta^{\searrow}(\mathcal{T}_{\alpha+1},\ldots)).
$$
On the other hand   the assumption on the integer $\alpha$ and the
tower $\mathcal{T}_{\alpha}$ forces the slide $\alpha^{\searrow}(
\mathcal{T}_{1},\ldots,\mathcal{T}_{\alpha})$ to terminate.
Therefore  $\alpha^{\searrow}(\beta^{\searrow} \mathcal{T})$
terminates as required.

\vskip.1in\noindent \textit{Case {\bf S1}(c).} If $(\alpha,0) \in
\mathcal{T}_\alpha$ and $(\alpha,1) \in \mathcal{T}_\alpha$ then we
have
$$\alpha^{\searrow} (\mathcal{T}_1,\ldots, \mathcal{T}_{\alpha},\ldots)=
(\mathcal{T}_1,\ldots,\mathcal{T}_\alpha) \sqcup
(\alpha+1)^{\searrow} (\mathcal{T}_{\alpha+1},\ldots).
$$ There are two cases to consider.

\vskip.1in \noindent \textit{Case {\bf S1}(c)(i).} If the tower
$\mathcal{T}_\alpha$ has no square on the diagonal $x+y=\beta-1$
then we have the equality
\begin{align*}
\beta^{\searrow}(\alpha^{\searrow}
\mathcal{T})&=\beta^{\searrow}((\mathcal{T}_1,\ldots,
\mathcal{T}_\alpha) \sqcup (\alpha+1)^{\searrow}
(\mathcal{T}_{\alpha+1},\ldots))\\
&= (\mathcal{T}_1,\ldots, \mathcal{T}_\alpha) \sqcup
\beta^{\searrow}((\alpha+1)^{\searrow}
(\mathcal{T}_{\alpha+1},\ldots)).
\end{align*}
On the other hand, we have
\begin{align*}\alpha^{\searrow}(\beta^{\searrow} \mathcal{T})&=\alpha^{\searrow}((\mathcal{T}_1,\ldots, \mathcal{T}_\alpha) \sqcup
\beta^{\searrow}(\mathcal{T}_{\alpha+1},\ldots))\\&=((\mathcal{T}_1,\ldots,
\mathcal{T}_\alpha) \sqcup
(\alpha+1)^{\searrow}(\beta^{\searrow}(\mathcal{T}_{\alpha+1},\ldots))).
\end{align*}

Recall that $(\alpha,1) \in \mathcal{T}_\alpha$ but
$\mathcal{T}_\alpha$ has no square on the diagonal $x+y=\beta-1$,
i.e., $(\alpha,\beta-\alpha-1)\not \in \mathcal{T}_\alpha$. This
show that $\beta-\alpha-1>1$ and hence $\beta-(\alpha+1) \geq 2$.

On the other hand  the number of cells in
$(\mathcal{T}_{\alpha+1},\ldots)$ is strictly less than that in
$\mathcal{T}$. Therefore,  by induction on the number of cells in a
tower diagram, we can assume that either
$\beta^{\searrow}((\alpha+1)^{\searrow}
(\mathcal{T}_{\alpha+1},\ldots))=(\alpha+1)^{\searrow}(\beta^{\searrow}
(\mathcal{T}_{\alpha+1},\ldots)$ or  both slides terminate since
$\beta-(\alpha+1) \geq 2$. Lastly, comparing
$\alpha^{\searrow}(\beta^{\searrow} \mathcal{T})$ and
 $\beta^{\searrow}(\alpha^{\searrow} \mathcal{T})$
we see that  either they are equal or both of them terminate.

\vskip.1in \noindent \textit{Case {\bf S1}(c)(ii).} Now we suppose
that the tower $\mathcal{T}_\alpha$ has a square on the diagonal
$x+y=\beta-1$. Then this square must be $(\alpha,\beta-\alpha-1)$.
If $(\alpha,\beta-\alpha)\not \in \mathcal{T}_\alpha$ then we have
\begin{align*}
\beta^{\searrow}(\alpha^{\searrow} \mathcal{T})&=\beta^{\searrow}(
(\mathcal{T}_1,\ldots, \mathcal{T}_\alpha) \sqcup
(\alpha+1)^{\searrow} (\mathcal{T}_{\alpha+1},\ldots))\\ &=
(\mathcal{T}_1,\ldots, \mathcal{T}_\alpha') \sqcup
(\alpha+1)^{\searrow} (\mathcal{T}_{\alpha+1},\ldots))
\end{align*}
where $\mathcal{T}_\alpha'= \mathcal{T}_\alpha
\cup\{(\alpha,\beta-\alpha)\}$. On the other hand, we have
\begin{align*}
\alpha^{\searrow}(\beta^{\searrow} \mathcal{T})&=\alpha^{\searrow}(
(\mathcal{T}_1,\ldots
,\mathcal{T}_\alpha',\mathcal{T}_{\alpha+1},\ldots)\\
&= (\mathcal{T}_1,\ldots, \mathcal{T}_\alpha') \sqcup
(\alpha+1)^{\searrow} (\mathcal{T}_{\alpha+1},\ldots)).
\end{align*}
Thus if the slide $(\alpha+1)^{\searrow}
(\mathcal{T}_{\alpha+1},\ldots)$ terminates then both of the slides
$\alpha^{\searrow}(\beta^{\searrow} \mathcal{T})$ and
$\beta^{\searrow}(\alpha^{\searrow} \mathcal{T})$ terminate, and
otherwise the resulting diagrams are the same.

Now if $(\alpha,\beta-\alpha) \in \mathcal{T}_\alpha$ but
$(\alpha,\beta-\alpha+1)\not \in \mathcal{T}_\alpha$ then the slide
$\beta^{\searrow}\mathcal{T}$ terminates and therefore the slide
$\alpha^{\searrow}(\beta^{\searrow}\mathcal{T})$ also terminates. On
the other hand, we have
$$\beta^{\searrow}(\alpha^{\searrow} \mathcal{T})=\beta^{\searrow}(
(\mathcal{T}_1,\ldots, \mathcal{T}_\alpha) \sqcup
(\alpha+1)^{\searrow} (\mathcal{T}_{\alpha+1},\ldots))
$$
and  it also  terminates by the assumption on $\beta$ and
$\mathcal{T}_\alpha$.

Lastly we assume that $(\alpha,\beta-\alpha) \in \mathcal{T}_\alpha$
and $(\alpha,\beta-\alpha+1) \in \mathcal{T}_\alpha$. Then we have
$$ \begin{aligned}
\beta^{\searrow}(\alpha^{\searrow} \mathcal{T})=&\beta^{\searrow}(
(\mathcal{T}_1,\ldots, \mathcal{T}_\alpha) \sqcup
(\alpha+1)^{\searrow} (\mathcal{T}_{\alpha+1},\ldots))\\=&
(\mathcal{T}_1,\ldots, \mathcal{T}_\alpha) \sqcup
(\beta+1)^{\searrow}((\alpha+1)^{\searrow}
(\mathcal{T}_{\alpha+1},\ldots)) \end{aligned}$$ and
$$ \begin{aligned} \alpha^{\searrow}(\beta^{\searrow} \mathcal{T})=&\alpha^{\searrow}(
(\mathcal{T}_1,\ldots, \mathcal{T}_\alpha) \sqcup
(\beta+1)^{\searrow} (\mathcal{T}_{\alpha+1},\ldots))\\ =&
(\mathcal{T}_1,\ldots, \mathcal{T}_\alpha) \sqcup
(\alpha+1)^{\searrow}((\beta+1)^{\searrow}
(\mathcal{T}_{\alpha+1},\ldots)).
\end{aligned}$$
It is clear that $(\beta+1)-(\alpha+1)\geq 2$ and thus, by induction
on the number of cells in a tower diagram, we have either the
equality
$$(\beta+1)^{\searrow}((\alpha+1)^{\searrow}
(\mathcal{T}_{\alpha+1},\ldots))=(\alpha+1)^{\searrow}((\beta+1)^{\searrow}
(\mathcal{T}_{\alpha+1},\ldots))$$ or that both slides terminate.
Therefore either $\alpha^{\searrow}(\beta^{\searrow} \mathcal{T})$
and  $\beta^{\searrow}(\alpha^{\searrow} \mathcal{T})$ are equal or
both terminate as required.

\vskip.1in \noindent \textit{Case {\bf S2}.} Let $(i,\alpha-1-i)\in
\mathcal{T}_i$ be the first square of $\mathcal{T}$, from the left,
lying on the diagonal $x+y=\alpha-1$. We have the following
sub-cases:

\vskip.1in \noindent \textit{Case {\bf S2}(a).} If $(i,\alpha-i)\not
\in \mathcal{T}_i$ then we have
$$\alpha^{\searrow} \mathcal{T}:=
(\mathcal{T}_1,\ldots,\mathcal{T}_{i-1}) \sqcup \alpha^{\searrow}
(\mathcal{T}_{i},\ldots)=(\mathcal{T}_1,\ldots,\mathcal{T}_{i-1},
 \mathcal{T}'_{i},\mathcal{T}_{i+1},\ldots)$$
where $\mathcal{T}_i'=\mathcal{T}_i \cup \{ (i,\alpha-i)\}$. Now
since $\beta\geq \alpha+2$, none of the towers
$\mathcal{T}_1,\ldots,\mathcal{T}_{i-1}, \mathcal{T}'_{i}$ contains
a square on the diagonal $x+y=\beta-1$ and hence we obtain the
equality
\begin{align*}
\beta^{\searrow}(\alpha^{\searrow} \mathcal{T})&=
\beta^{\searrow}(\mathcal{T}_1,\ldots,\mathcal{T}_{i-1}
 \mathcal{T}'_{i},\mathcal{T}_{i+1},\ldots)\\ &=
(\mathcal{T}_1,\ldots,\mathcal{T}'_{i}) \sqcup \beta^{\searrow}
(\mathcal{T}_{i+1},\ldots).
\end{align*}

\noindent On the other hand, we have
\begin{align*}
\alpha^{\searrow}(\beta^{\searrow} \mathcal{T})&:=\alpha^{\searrow}
((\mathcal{T}_1,\ldots,\mathcal{T}_{i}) \sqcup \beta^{\searrow}
(\mathcal{T}_{i+1},\ldots))\\
&=(\mathcal{T}_1,\ldots,\mathcal{T}'_{i}) \sqcup \beta^{\searrow}
(\mathcal{T}_{i+1},\ldots).
\end{align*}

Hence if the slide $\beta^{\searrow} (\mathcal{T}_{i+1},\ldots)$
 terminates then both  $\alpha^{\searrow}(\beta^{\searrow}
\mathcal{T})$ and $\beta^{\searrow}(\alpha^{\searrow} \mathcal{T})$
terminate, and otherwise the resulting diagrams are the same.

\vskip.1in \noindent \textit{Case {\bf S2}(b).} If $(i,\alpha-i) \in
\mathcal{T}_i$ and $(i,\alpha-i+1)\not \in \mathcal{T}_i$ then the
slides $\alpha^{\searrow} (\mathcal{T}_{i},\ldots)$ and
$\alpha^{\searrow} \mathcal{T}$ terminate without a result.
Therefore the slide $\beta^{\searrow}(\alpha^{\searrow}
\mathcal{T})$ also terminates. Now if $\beta^{\searrow} \mathcal{T}$
terminates then the slide $\alpha^{\searrow}(\beta^{\searrow}
\mathcal{T})$ also terminates. Otherwise, we have
\begin{align*}
\beta^{\searrow} \mathcal{T}&=
(\mathcal{T}_1,\ldots,\mathcal{T}_{i}) \sqcup \beta^{\searrow}
(\mathcal{T}_{i+1},\ldots)\\
\alpha^{\searrow}(\beta^{\searrow}
\mathcal{T})&=\alpha^{\searrow}((\mathcal{T}_1,\ldots,\mathcal{T}_{i})
\sqcup \beta^{\searrow} (\mathcal{T}_{i+1},\ldots))
\end{align*}
but still   $\alpha^{\searrow}(\beta^{\searrow} \mathcal{T})$
 terminates since $(i,\alpha-i) \in
\mathcal{T}_i$ but  $(i,\alpha-i+1)\not \in \mathcal{T}_i$.

\vskip.1in \noindent \textit{Case {\bf S2}(c). } If $(i,\alpha-i) \in
\mathcal{T}_i$ and $(i,\alpha-i+1) \in \mathcal{T}_i$  then we have
$$\alpha^{\searrow} \mathcal{T}:= (\mathcal{T}_1,\ldots, \mathcal{T}_i) \sqcup
(\alpha+1)^{\searrow} (\mathcal{T}_{i+1},\ldots).
$$
There are two more cases to consider.

\vskip.1in \noindent \textit{Case {\bf S2}©(i).} If $\mathcal{T}_i$
has no square on the diagonal $x+y=\beta-1$ then, we have
$$\beta^{\searrow}(\alpha^{\searrow} \mathcal{T})= (\mathcal{T}_1,\ldots, \mathcal{T}_i) \sqcup
\beta^{\searrow}((\alpha+1)^{\searrow} (\mathcal{T}_{i+1},\ldots)).
$$
On the other hand, we have
\begin{align*}
\alpha^{\searrow}(\beta^{\searrow}
\mathcal{T})&:=\alpha^{\searrow}((\mathcal{T}_1,\ldots,
\mathcal{T}_i) \sqcup
\beta^{\searrow}(\mathcal{T}_{i+1},\ldots))\\
&=((\mathcal{T}_1,\ldots, \mathcal{T}_i) \sqcup
(\alpha+1)^{\searrow}(\beta^{\searrow}(\mathcal{T}_{i+1},\ldots))).
\end{align*}

Recall that $(i,\alpha-i) \in \mathcal{T}_i$ but $\mathcal{T}_i$ has
no square on the diagonal $x+y=\beta-1$, i.e., $(i,\beta-i-1)\not
\in \mathcal{T}_\alpha$. This show that $\beta-i-1>\alpha-i$ and
hence $\beta-(\alpha+1) \geq 2$.

On the other hand  the number of cells in
$(\mathcal{T}_{i+1},\ldots)$ is strictly less than that of
$\mathcal{T}$. Therefore,  by induction on the number of cells in a
tower diagram, we can assume that either
$\beta^{\searrow}((\alpha+1)^{\searrow}
(\mathcal{T}_{i+1},\ldots))=(\alpha+1)^{\searrow}(\beta^{\searrow}
(\mathcal{T}_{i+1},\ldots)$ or  both slides terminate since
$\beta-(\alpha+1) \geq 2$. Lastly, comparing
$\alpha^{\searrow}(\beta^{\searrow} \mathcal{T})$ and
 $\beta^{\searrow}(\alpha^{\searrow} \mathcal{T})$
we see that  either they are equal or both of them terminate.

\vskip.1in \noindent \textit{Case {\bf S2}(c)(ii).} Now we suppose
that  $\mathcal{T}_i$ has a square on the diagonal $x+y=\beta-1$.
Then this square must be $(i,\beta-i-1)$.

If $(i,\beta-i)\not \in \mathcal{T}_i$ then
\begin{align*}
\beta^{\searrow}(\alpha^{\searrow} \mathcal{T})&=\beta^{\searrow}(
(\mathcal{T}_1,\ldots, \mathcal{T}_i) \sqcup (\alpha+1)^{\searrow}
(\mathcal{T}_{i+1},\ldots))\\ &= (\mathcal{T}_1,\ldots,
\mathcal{T}_i') \sqcup (\alpha+1)^{\searrow}
(\mathcal{T}_{i+1},\ldots))
\end{align*}
where $\mathcal{T}_i'= \mathcal{T}_i \cup\{(i,\beta-i)\}$. On the
other hand, we have
\begin{align*}
\alpha^{\searrow}(\beta^{\searrow} \mathcal{T})&=\alpha^{\searrow}(
(\mathcal{T}_1,\ldots, \mathcal{T}_i',\mathcal{T}_{i+1},\ldots)\\
&= (\mathcal{T}_1,\ldots, \mathcal{T}_i') \sqcup
(\alpha+1)^{\searrow} (\mathcal{T}_{i+1},\ldots)).
\end{align*}

Now if the slide $(\alpha+1)^{\searrow} (\mathcal{T}_{i+1},\ldots)$
terminates then both of the slides
$\alpha^{\searrow}(\beta^{\searrow} \mathcal{T})$ and
$\beta^{\searrow}(\alpha^{\searrow} \mathcal{T})$ terminate, and
otherwise the resulting diagrams are the same.

If $(i,\beta-i) \in \mathcal{T}_i$ but $(i,\beta-i+1)\not \in
\mathcal{T}_i$ then the slide $\beta^{\searrow}\mathcal{T}$
terminates and therefore the slide
$\alpha^{\searrow}(\beta^{\searrow}\mathcal{T})$ also terminates. On
the other hand, we have
$$\beta^{\searrow}(\alpha^{\searrow} \mathcal{T})=\beta^{\searrow}(
(\mathcal{T}_1,\ldots, \mathcal{T}_i) \sqcup (\alpha+1)^{\searrow}
(\mathcal{T}_{i+1},\ldots))
$$
and it also terminates by the assumption on $\beta$ and
$\mathcal{T}_i$.

For the last case, we assume that $(i,\beta-i)$ and $(i,\beta-i+1)$
are in $\mathcal{T}_i$. Then we have
$$ \begin{aligned}
\beta^{\searrow}(\alpha^{\searrow} \mathcal{T})=&\beta^{\searrow}(
(\mathcal{T}_1,\ldots, \mathcal{T}_i) \sqcup (\alpha+1)^{\searrow}
(\mathcal{T}_{i+1},\ldots))\\=& (\mathcal{T}_1,\ldots,
\mathcal{T}_i) \sqcup (\beta+1)^{\searrow}((\alpha+1)^{\searrow}
(\mathcal{T}_{i+1},\ldots)) \end{aligned}$$
 and also
$$ \begin{aligned} \alpha^{\searrow}(\beta^{\searrow} \mathcal{T})=&\alpha^{\searrow}(
(\mathcal{T}_1,\ldots, \mathcal{T}_i) \sqcup (\beta+1)^{\searrow}
(\mathcal{T}_{i+1},\ldots))\\ =&
(\mathcal{T}_1,\ldots, \mathcal{T}_i) \sqcup
(\alpha+1)^{\searrow}((\beta+1)^{\searrow}
(\mathcal{T}_{i+1},\ldots)).
\end{aligned}$$
Thus, by induction, we have either the equality
$$(\beta+1)^{\searrow}((\alpha+1)^{\searrow}
(\mathcal{T}_{i+1},\ldots))=(\alpha+1)^{\searrow}((\beta+1)^{\searrow}
(\mathcal{T}_{i+1},\ldots))$$ or that both slides terminate and this
gives the required result.
\end{proof}

\begin{proof}(of Lemma \ref{Lemma:SlidingClose})  The case that
 $\mathcal{T}$ has  no
squares lying on the diagonal $x+y=\alpha-1$ ({\bf S1})  can be
dealt with in the same manner as the case  that $\mathcal{T}$ has
some squares lying on the diagonal $x+y=\alpha-1$ ({\bf S2}), as illustrated in the proof of the previous Lemma.
Because of this reason in the following we will just work on the
case of {\bf S2}. Therefore, let $\mathcal{T}_i$ be the
 be the first tower from the left which contains a square, necessarily $(i,\alpha-1-i)$,  on the diagonal $x+y=\alpha-1$.

\vskip.1in \noindent \textit{Case 1.} If $(i,\alpha-i)\not \in
\mathcal{T}_i$ then
$\alpha^{\searrow} \mathcal{T}=(\mathcal{T}_1,\ldots,\mathcal{T}_{i-1},
 \mathcal{T}'_{i},\mathcal{T}_{i+1},\ldots)$
where $\mathcal{T}_i'=\mathcal{T}_i \cup \{ (i,\alpha-i)\}$. Now
since the cell $\{ (i,\alpha-i)\}$ is the first cell of $\mathcal{T}$, from the left,
lying on the diagonal $x+y=(\alpha+1)-1$, we have that
$$(\alpha+1)^{\searrow}(\alpha^{\searrow} \mathcal{T})=(\mathcal{T}_1,\ldots,\mathcal{T}_{i-1},
 \mathcal{T}''_{i},\mathcal{T}_{i+1},\ldots)$$
where $\mathcal{T}_i''=\mathcal{T}_i \cup  \{ (i,\alpha-i),(i,\alpha-i+1)\}$. On the other hand, since $\mathcal{T}_i''$ is still the first tower
containing a cell on $x+y=\alpha-1$ and since $\{ (i,\alpha-i),(i,\alpha-i+1)\}\subset \mathcal{T}_i''$ we have
\begin{align*}
\alpha^{\searrow}((\alpha+1)^{\searrow}(\alpha
^{\searrow}\mathcal{T}))&=\alpha^{\searrow}(\mathcal{T}_1,\ldots,\mathcal{T}_{i-1},
 \mathcal{T}''_{i},\mathcal{T}_{i+1},\ldots)\\ &=(\mathcal{T}_1,\ldots,\mathcal{T}_{i-1},
 \mathcal{T}''_{i}) \sqcup (\alpha+1)^{\searrow}(\mathcal{T}_{i+1},\ldots).
\end{align*}
Observe that since none of the towers $ \mathcal{T}_1, \mathcal{T}_2, \ldots \mathcal{T}_i$ of $\mathcal{T}$ contains a cell on the diagonal $x+y=(\alpha+1)-1$, we get
$$(\alpha+1)^{\searrow} \mathcal{T}=(\mathcal{T}_1,\ldots,\mathcal{T}_{i})\sqcup (\alpha+1)^{\searrow} (\mathcal{T}_{i+1}, \ldots).
$$
Moreover the assumption that $(i,\alpha-i)\not \in
\mathcal{T}_i$  yields the equality
$$(\alpha+1)^{\searrow}(\alpha^{\searrow}((\alpha+1)^{\searrow} \mathcal{T})=(\mathcal{T}_1,\ldots,\mathcal{T}''_{i})\sqcup (\alpha+1)^{\searrow} (\mathcal{T}_{i+1}, \ldots)
$$
where $\mathcal{T}_i''=\mathcal{T}_i \cup  \{ (i,\alpha-i),(i,\alpha-i+1)\}$. Hence we have the desired result.

\vskip.1in \noindent \textit{Case 2.} If $(i,\alpha-i) \in
\mathcal{T}_i$ and $(i,\alpha+1-i)\not \in \mathcal{T}_i$ then the slide $\alpha^{\searrow} \mathcal{T}$ (and therefore the slide $\alpha^{\searrow}(\alpha+1^{\searrow}(\alpha^{\searrow} \mathcal{T}))$) terminates without a result.
 On the other hand since $\mathcal{T}_i$ is the first tower containing a cell on $x+y=(\alpha+1)-1$ and since  $(i,\alpha+1-i)\not \in \mathcal{T}_i$ we have
$$(\alpha+1)^{\searrow} \mathcal{T})=(\mathcal{T}_1,\ldots,\mathcal{T}'_{i}, \ldots)$$
 where $\mathcal{T}'_{i}=\mathcal{T}_{i}\cup \{(i,\alpha+1-i)\}$. Now
$$\alpha^{\searrow}((\alpha+1)^{\searrow} \mathcal{T}))=\alpha^{\searrow}(\mathcal{T}_1,\ldots,\mathcal{T}'_{i}, \ldots)=(\mathcal{T}_1,\ldots,\mathcal{T}'_{i})\sqcup (\alpha+1)^{\searrow}
(\mathcal{T}_{i+1},\ldots)$$ since $\mathcal{T}'_{i}$ contains both $(i,\alpha-i)$ and $(i,\alpha+1-i)$. On the other hand  $(i,\alpha+2-i) \not \in \mathcal{T}'_{i}$. Thus the slide $(\alpha+1)^{\searrow} (\mathcal{T}_1,\ldots,\mathcal{T}'_{i})\sqcup (\alpha+1)^{\searrow}
(\mathcal{T}_{i+1},\ldots)$ terminates and hence the slide $(\alpha+1)^{\searrow}(\alpha^{\searrow}((\alpha+1)^{\searrow} \mathcal{T}))$ also terminates as required.

\vskip.1in \noindent \textit{Case 3.} If $(i,\alpha-i) \in
\mathcal{T}_i$ and $(i,\alpha+1-i) \in \mathcal{T}_i$  then
$\alpha^{\searrow} \mathcal{T}= (\mathcal{T}_1\ldots \mathcal{T}_i) \sqcup
(\alpha+1)^{\searrow} (\mathcal{T}_{i+1},\ldots)$.

We first suppose that  $(i,\alpha+2-i) \in \mathcal{T}_i$. Then, we have
$$\begin{aligned}
\alpha^{\searrow}(\alpha+1^{\searrow})(\alpha^{\searrow} \mathcal{T})=&\alpha^{\searrow}(\alpha+1^{\searrow})((\mathcal{T}_1,\ldots, \mathcal{T}_i) \sqcup
(\alpha+1)^{\searrow} (\mathcal{T}_{i+1},\ldots))\\
=&\alpha^{\searrow} ((\mathcal{T}_1,\ldots, \mathcal{T}_i) \sqcup
(\alpha+2)^{\searrow}(\alpha+1)^{\searrow} (\mathcal{T}_{i+1},\ldots))\\
=&(\mathcal{T}_1,\ldots, \mathcal{T}_i) \sqcup
(\alpha+1)^{\searrow}(\alpha+2)^{\searrow}(\alpha+1)^{\searrow} (\mathcal{T}_{i+1},\ldots)\\
\end{aligned}
$$
On the other hand
$$
(\alpha+1^{\searrow})(\alpha^{\searrow}(\alpha+1^{\searrow}) \mathcal{T})
=(\mathcal{T}_1,\ldots, \mathcal{T}_i) \sqcup
(\alpha+2)^{\searrow}(\alpha+1)^{\searrow}(\alpha+2)^{\searrow} (\mathcal{T}_{i+1},\ldots).\\
$$
Therefore, an induction argument  on the number of towers gives the required result.

Next suppose that   $(i,\alpha+2-i) \not \in \mathcal{T}_i$. Then   we have $\alpha^{\searrow} \mathcal{T}=
(\mathcal{T}_1\ldots \mathcal{T}_i) \sqcup (\alpha+1)^{\searrow} (\mathcal{T}_{i+1},\ldots)$. Now the fact that $
\mathcal{T}_i$ is the first tower of  $\mathcal{T}$ containing  $(i,\alpha-i)$ on the diagonal $x+y=(\alpha+1)-1$ and
the fact that  $(i,\alpha+1-i)\in\mathcal{T}_i $ but $(i,\alpha+2-i)\not \in\mathcal{T}_i $ gives that the slide $(\alpha
+1^{\searrow})(\alpha^{\searrow}\mathcal{T})$ and that the slide $\alpha+1^{\searrow}\mathcal{T}$ terminate.
Therefore both the slide $\alpha^{\searrow}(\alpha+1^{\searrow}(\alpha^{\searrow} \mathcal{T})$ and the slide $
\alpha+1^{\searrow}(\alpha^{\searrow}
(\alpha+1^{\searrow} \mathcal{T}))$ terminate, as required.
\end{proof}

\begin{proof}(of Lemma \ref{lem:noZigzag}) Let $D$ be the cell just below the
cell $C$. Since the cell $C$ is already filled, the cell $D$ can only be filled by a
zigzag. We show that such a zigzag cannot exist in the virtual sliding of the natural
word.

By its definition, the natural word $\eta$ of $\omega$ is a
sequence of strictly increasing sequences $\lambda_1,
\lambda_2,\ldots,\lambda_k$ such that the subsequence of initial
terms of $\lambda_i$ is strictly decreasing. Therefore the reverse
word $\tilde\eta$ is a sequence of strictly decreasing sequences
$\tilde\lambda_k,\tilde\lambda_{k-1},\ldots,\tilde\lambda_1$ such
that the subsequence of the terminal terms is strictly increasing.
In particular, we observe that if $j$ is a terminal term for a
subsequence $\tilde\lambda_l$, then this is the last occurrence of
$j$ in $\eta$.

Observe also that the first block $B$ of towers in the virtual tower, from
top to bottom, is the block containing the cell $C$. (Here by a block, we mean a
connected component of the tableau $T^-$.)
Indeed the cell $C$ corresponds to the sliding of the first letter of
$\tilde\lambda_k$ and the block $B$ has top cell corresponding to the last letter
of $\tilde\lambda_k$. Thus as well as the last letter, there can appear no smaller letter. But to have a new block on the top of the block $B$, there is need for a
slide of a smaller letter. Thus $B$ is the first block.

Notice that the
same argument also proves that the cell $D$ should be empty. Indeed since there
is no tower on the top of $B$, the only way to fill $D$ is a slide of the last
letter of $\tilde\lambda_k$. But this letter cannot appear again.
\end{proof}

\end{document}